\theoremstyle{definition}
\newtheorem{definition}{Definition}[section]
\newtheorem{example}[definition]{Example}
\theoremstyle{plain}
\newtheorem{theorem}[definition]{Theorem}
\newtheorem{proposition}[definition]{Proposition}
\newtheorem{lemma}[definition]{Lemma}
\newtheorem{corollary}[definition]{Corollary}
\theoremstyle{remark}
\xpatchcmd{\@sec@pppage}{
\bfseries}{
\normalfont\scshape\Large}{}{}
\numberwithin{equation}{section}
\newcommand{\Cq}{C_{\textnormal{q}}(\mathds{R}^{d})}
\newcommand{\Cqaff}{C_{\textnormal{q}}^{\textnormal{aff}}(\mathds{R}^{d})}
\newcommand{\Cqaffo}{C_{\textnormal{q}}^{\textnormal{aff}}(\mathds{R})}
\newcommand{\PP}{\mathscr{P}}
\newcommand{\Rd}{\mathds{R}^{d}}
\newcommand{\lc}{\preceq_{\textnormal{c}}}
\newcommand{\R}{\mathds{R}}
\newcommand{\MCov}{\textnormal{MCov}}
\newcommand{\MT}{\mathsf{MT}}
\newcommand{\Cpl}{\mathsf{Cpl}}
\newcommand{\bary}{\operatorname{bary}}
\newcommand{\E}{\mathds{E}}
\newcommand{\Law}{\operatorname{Law}}
\newcommand{\dom}{\operatorname{dom}}
\newcommand{\sbm}{\textnormal{SBM}}
\newcommand{\supp}{\operatorname{supp}}
\newcommand{\dmt}{\textnormal{DMT}}
\newcommand{\psilim}{\psi_{\textnormal{lim}}}
\newcommand{\Aff}{\textnormal{Aff}}
\newcommand{\aff}{\textnormal{aff}}
\newcommand{\psiopt}{\psi_{\textnormal{opt}}}
\begin{document}


\title{
\LARGE The decomposition of stretched Brownian motion into Bass martingales\thanks{
This research was funded by the Austrian Science Fund (FWF) [Grant DOIs: 10.55776/P35197 and 10.55776/P35519]. For open access purposes, the authors have applied a CC BY public copyright license to any author accepted manuscript version arising from this submission.

\smallskip

\noindent We thank Pietro Siorpaes for his valuable feedback and comments during the preparation of this paper. 
}}


\author{
\large Walter Schachermayer\thanks{
Faculty of Mathematics, University of Vienna (email: \href{mailto: walter.schachermayer@univie.ac.at}{walter.schachermayer@univie.ac.at}).
} 
\and
\large Bertram Tschiderer\thanks{
Faculty of Mathematics, University of Vienna (email: \href{mailto: bertram.tschiderer@univie.ac.at}{bertram.tschiderer@univie.ac.at}).
} 
}


\date{}


\maketitle


\begin{abstract} \small \noindent \textsc{Abstract.} In previous work J.\ Backhoff-Veraguas, M.\ Beiglb\"ock and the present authors showed that the notions of stretched Brownian motion and Bass martingale between two probability measures on Euclidean space coincide if and only if these two measures satisfy an irreducibility condition.

Now we consider the general case, i.e.\ a pair of measures which are not necessarily irreducible. We show that there is a paving of Euclidean space into relatively open convex sets such that the stretched Brownian motion decomposes into a (possibly uncountable) family of Bass martingales on these sets. This paving coincides with the irreducible convex paving studied in previous work of H.\ De March and N.\ Touzi.

\bigskip

\small \noindent \href{https://mathscinet.ams.org/mathscinet/msc/msc2020.html}{\textit{MSC 2020 subject classifications:}} Primary 60G42, 60G44; secondary 91G20.

\bigskip

\small \noindent \textit{Keywords and phrases:} optimal transport, Brenier's theorem, Benamou--Brenier, stretched Brownian motion, Bass martingale
\end{abstract}

\section{Introduction} \label{sec_int_gen_case}

As in the parallel paper \cite{BVBST23} we consider two probability measures $\mu$ and $\nu$ on $\Rd$ with finite second moments, denoted by $\mu,\nu \in \PP_{2}(\Rd)$. It is always assumed that $\nu$ dominates $\mu$ in convex order, written $\mu \lc \nu$, and meaning that $\int f \, d\mu \leqslant \int f \, d\nu$ holds for all convex functions $f \colon \Rd \rightarrow \R$.

\smallskip

Our object of interest are \textit{martingale transports} starting from $\mu$ and terminating in $\nu$, i.e., one-step martingales $M = (M_{0},M_{1})$ with initial distribution $\Law(M_{0}) = \mu$ and terminal distribution $\Law(M_{1}) = \nu$. A celebrated result by Strassen \cite{Str65} shows that, under the above assumptions, there always exists a martingale transport from $\mu$ to $\nu$. We may and shall identify the one-step martingale $M = (M_{0},M_{1})$ with the joint law $\pi \in \PP_{2}(\Rd \times \Rd)$ of the random vector $(M_{0},M_{1})$. The set of all martingale transports $\pi$ from $\mu$ to $\nu$ is denoted by $\MT(\mu,\nu)$.

\smallskip

In \cite{BVBHK20} and \cite{BVBST23} a special martingale transport $\pi^{\sbm} \in \MT(\mu,\nu)$ has been identified: the \textit{stretched Brownian motion} from $\mu$ to $\nu$, which uniquely exists for all $\mu, \nu \in \PP_{2}(\Rd)$ with $\mu \lc \nu$. It is defined as the optimizer $\pi^{\sbm}(dx,dy) = \pi_{x}^{\sbm}(dy) \, \mu(dx)$ of the martingale optimal transport problem
\begin{equation} \label{pp_mbb_mcov_dt_f}
P(\mu,\nu) = \sup_{\pi \in \MT(\mu,\nu)}  \int \MCov(\pi_{x},\gamma) \, \mu(dx),
\end{equation}
where $\gamma$ is the standard Gaussian measure on $\Rd$. The maximal covariance $\MCov(\, \cdot \, , \, \cdot \,)$ between two probability measures $p,q \in \PP_{2}(\Rd)$ is defined as 
\[
\MCov(p,q) = \sup_{\tilde{\pi} \in \Cpl(p,q)} \int \langle y,z \rangle \, \tilde{\pi}(dy,dz), 
\]
where the set $\Cpl(p,q)$ consists of all couplings $\tilde{\pi}$ between $p$ and $q$, i.e., probability measures $\tilde{\pi}$ on $\Rd \times \Rd$ with marginals $p$ and $q$. By \cite[Theorem 2.2]{BVBHK20}, the discrete-time formulation \eqref{pp_mbb_mcov_dt_f} is equivalent to the continuous-time martingale optimal transport problem
\begin{equation} \label{pp_mbb_trace_ct_f}
P(\mu,\nu) =
\sup_{\substack{M_{0} \sim \mu, \, M_{1} \sim \nu, \\ M_{t} = M_{0} + \int_{0}^{t} \sigma_{s} \, dB_{s}}}
\mathds{E}\Big[\int_{0}^{1} \textnormal{tr} (\sigma_{t}) \, dt\Big],
\end{equation}
and there exists a unique-in-law optimizer $M^{\sbm} = (M^{\sbm}_{t})_{0 \leqslant t \leqslant 1}$ of \eqref{pp_mbb_trace_ct_f}. Furthermore, the optimizers $M^{\sbm}$ of \eqref{pp_mbb_trace_ct_f} and $\pi^{\sbm}$ of \eqref{pp_mbb_mcov_dt_f} are related via $\pi^{\sbm} = \Law(M^{\sbm}_{0},M^{\sbm}_{1})$, the values $P(\mu,\nu)$ in \eqref{pp_mbb_mcov_dt_f} and \eqref{pp_mbb_trace_ct_f} are equal, and $M^{\sbm}$ can be explicitly constructed from $\pi^{\sbm}$ (see Theorem 2.2 in \cite{BVBHK20}). Therefore the stretched Brownian motion from $\mu$ to $\nu$ can equivalently be viewed as the law $\pi^{\sbm} \in \MT(\mu,\nu)$ of the one step-martingale $(M^{\sbm}_{0},M^{\sbm}_{1})$, or as the continuous-time martingale $(M^{\sbm}_{t})_{0 \leqslant t \leqslant 1}$ with stochastic differential
\[
dM_{t}^{\sbm} = \sigma_{t}^{\sbm} \, dB_{t}, \qquad 0 \leqslant t \leqslant 1,
\]
where $(B_{t})_{0 \leqslant t \leqslant 1}$ is a standard Brownian motion on $\Rd$. In the following we shall freely switch between these two representations of a stretched Brownian motion. 

\smallskip

A particularly appealing subclass of stretched Brownian motions consists of \textit{Bass martingales}.

\begin{definition}[{\cite[Definition 1.1]{BVBST23}}] \label{def_bm_gcp} Let $(B_{t})_{0 \leqslant t \leqslant 1}$ be a $d$-dimensional Brownian motion with initial distribution $B_{0} \sim \alpha$, where $\alpha$ is a probability measure on $\Rd$. Let $v \colon \Rd \rightarrow \R$ be a convex function such that $\nabla v(B_{1})$ is square-integrable. The martingale 
\begin{equation} \label{eq_def_bm_gcp}
M_{t} \coloneqq 
\E[\nabla v(B_{1}) \, \vert \, \sigma(B_{s} \colon s \leqslant t)]
= \E[\nabla v(B_{1}) \, \vert \, B_{t}], \qquad 0 \leqslant t \leqslant 1,
\end{equation}
is called \textit{Bass martingale} with \textit{Bass measure} $\alpha \in \mathscr{P}(\Rd)$, initial law $\mu \coloneqq \Law(M_{0})$, and terminal law $\nu \coloneqq \Law(M_{1})$.
\end{definition}

Martingales of this form where introduced by Bass \cite{Bas83} (in dimension $d=1$ and with $\alpha$ a Dirac measure) in order to derive a solution of the Skorokhod embedding problem. Definition \ref{def_bm_gcp} generalizes this concept to multiple dimensions and non-degenerate starting laws. A Bass martingale is a stretched Brownian motion \cite[Theorem 1.10]{BVBHK20}, but not vice versa.

\subsection{The irreducible case}

In \cite{BVBST23} the following question was answered: under which conditions on $\mu$ and $\nu$ is the stretched Brownian motion $\pi^{\sbm}$ from $\mu$ to $\nu$ a Bass martingale? The answer is given by a connectivity assumption on the marginals, called \textit{irreducibility}.

\begin{definition}[{\cite[Definition 1.2]{BVBST23}}] For probability measures $\mu, \nu$ on $\Rd$ we say that the pair $(\mu,\nu)$ is \textit{irreducible}, if for all measurable sets $A, B \subseteq \Rd$ with $\mu(A) > 0$ and $\nu(B) > 0$, there exists a martingale transport $\pi \in \MT(\mu, \nu)$ such that $\pi(A \times B )>0$. 
\end{definition} 

The irreducibility assumption is necessary and sufficient for a stretched Brownian motion to be a Bass martingale, as shown in \cite[Theorem 1.3]{BVBST23}. The analysis of stretched Brownian motion and Bass martingales relies on the following dual formulation of the optimization problem \eqref{pp_mbb_mcov_dt_f}.

\begin{theorem}[{\cite[Theorem 1.4]{BVBST23}}] \label{theorem.1.4.o1a} Assume that $\mu, \nu \in \PP_{2}(\Rd)$ are in convex order. The value $P(\mu,\nu)$ of the problem \eqref{pp_mbb_mcov_dt_f} is equal to 
\begin{equation} \label{dp_mbb_mcov_dt_f}
D(\mu,\nu) = 
\inf_{\substack{\psi \in L^{1}(\nu), \\ \textnormal{$\psi$ convex}}} 
\Big( \int \psi \, d\nu - \int (\psi^{\ast} \ast \gamma)^{\ast} \, d \mu \Big).
\end{equation}
The infimum is attained by a lower semicontinuous convex function $\psi \colon \Rd \rightarrow (-\infty,+\infty]$ with $\mu(\operatorname{ri}(\dom\psi)) = 1$ if and only if $(\mu,\nu)$ is irreducible. In this case the (unique) optimizer to \eqref{pp_mbb_trace_ct_f} is given by the Bass martingale
\[
M_{t} \coloneqq \mathds{E}[\nabla v(B_{1}) \, \vert \, \sigma(B_{s} \colon s \leqslant t)]
= \mathds{E}[\nabla v(B_{1}) \, \vert \, B_{t}], \qquad 0 \leqslant t \leqslant 1,
\]
where $v = \psi^{\ast}$ and $B_{0} \sim \nabla (\psi^{\ast} \ast \gamma)^{\ast}(\mu)$.
\end{theorem}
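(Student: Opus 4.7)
The plan is first to obtain $P(\mu,\nu)\leqslant D(\mu,\nu)$ by a fiberwise Kantorovich argument. For any convex $\psi\in L^{1}(\nu)$ and $a\in\Rd$, the convex function $\phi_{a}(y) := \psi(y) - \langle a,y\rangle$ has conjugate $\phi_{a}^{\ast}(z) = \psi^{\ast}(z+a)$, so Kantorovich duality for $\MCov(\pi_{x},\gamma)$ together with the martingale identity $\int y\,\pi_{x}(dy)=x$ and the centering of $\gamma$ gives
\[
\MCov(\pi_{x},\gamma) \leqslant \int \psi\,d\pi_{x} - \langle a,x\rangle + (\psi^{\ast}\ast\gamma)(a).
\]
Minimizing over $a$ yields $\MCov(\pi_{x},\gamma)\leqslant \int \psi\,d\pi_{x} - (\psi^{\ast}\ast\gamma)^{\ast}(x)$, and integrating against $\mu$ using $\int\!\int\psi\,d\pi_{x}\,\mu(dx) = \int\psi\,d\nu$ establishes $P(\mu,\nu) \leqslant D(\mu,\nu)$.

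\textbf{Strong duality and attainment.} To promote this to equality I would apply a Fenchel--Rockafellar / minimax argument: the primal \eqref{pp_mbb_mcov_dt_f} is a concave upper semicontinuous supremum over the weakly compact convex set $\MT(\mu,\nu)$ (tightness coming from the finite second moments), so no duality gap arises. For existence of a dual minimizer $\psi$ under irreducibility, I would take a minimizing sequence $(\psi_{n})$, normalize by subtracting a suitable affine function (permissible since $\mu \lc \nu$ forces $\int \ell\,d\mu = \int\ell\,d\nu$ for every affine $\ell$, so affine shifts do not change the objective), and extract a Komlós-type convex combination converging $\nu$-a.e.\ to a convex lsc limit $\psi$, which attains the infimum by Fatou. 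Irreducibility is precisely what prevents the minimizing sequence from concentrating $\dom\psi_{n}$ on a proper convex subset missed by $\mu$, thereby securing $\mu(\operatorname{ri}(\dom\psi))=1$.

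\textbf{Identification as a Bass martingale.} Under irreducibility, equality throughout the chain of the first paragraph at $\pi = \pi^{\sbm}$ and at the optimal $\psi$ pins down the primal structure. First-order optimality in $a$ forces $a(x) = \nabla(\psi^{\ast}\ast\gamma)^{\ast}(x)$ for $\mu$-a.e.\ $x$, and saturation of the Kantorovich bound forces the optimal coupling of $\pi^{\sbm}_{x}$ with $\gamma$ to be the Brenier map associated to $\phi_{a(x)}$, i.e.\ $y = \nabla\psi^{\ast}(z + a(x))$ with $z\sim\gamma$. Setting $v := \psi^{\ast}$ and letting $B_{0} \sim \nabla(\psi^{\ast}\ast\gamma)^{\ast}(\mu)$ be independent of a standard Brownian increment so that $B_{1} = B_{0} + Z$ with $Z\sim\gamma$, one then verifies $\Law(\nabla v(B_{1})) = \nu$ and that $M_{t} := \E[\nabla v(B_{1}) \mid B_{t}]$ is a continuous-time optimizer of \eqref{pp_mbb_trace_ct_f} whose endpoint law is $\pi^{\sbm}$, matching Definition \ref{def_bm_gcp}.

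\textbf{Converse and main obstacle.} For the converse direction, suppose a dual minimizer $\psi$ exists with $\mu(\operatorname{ri}(\dom\psi))=1$. Then the construction of the previous paragraph produces a Bass martingale representation of $\pi^{\sbm}$, and a direct check shows that the associated coupling $(\nabla v(B_{1}), B_{1})$ charges every product of measurable sets of positive $\mu$- and $\nu$-measure, so $(\mu,\nu)$ is irreducible. The main obstacle throughout is this dual attainment together with the regularity condition: the integrability $\psi\in L^{1}(\nu)$ alone provides too weak a compactness, and one must exploit both the convexity and the Gaussian smoothing inside $(\psi^{\ast}\ast\gamma)^{\ast}$ to keep minimizing sequences from drifting towards degenerate affine decompositions of $\nu$. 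Irreducibility is the exact condition that validates this control; without it, the paving of $\Rd$ into irreducible components treated in the sequel is needed in order to recover a local Bass structure.
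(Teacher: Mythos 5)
The paper itself does not prove this theorem; it is imported verbatim from \cite[Theorem~1.4]{BVBST23}. So I can only compare your sketch against what a correct proof would have to accomplish, informed by how the present paper handles the technical caveat it raises immediately after the theorem statement.

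Your weak-duality paragraph is correct and is the standard route: the conjugation identity $\phi_a^\ast(z)=\psi^\ast(z+a)$, the Kantorovich bound for $\MCov(\pi_x,\gamma)$, the martingale identity $\int y\,\pi_x(dy)=x$, and minimization over $a$ to pull out $(\psi^\ast\ast\gamma)^\ast(x)$ are exactly the right ingredients. The identification of the optimizer as a Bass martingale via saturation of the fiberwise Kantorovich bound and the first-order condition $a(x)=\nabla(\psi^\ast\ast\gamma)^\ast(x)$ is also the right picture, and the check $\Law(\nabla v(B_1))=\nu$ with $v=\psi^\ast$, $B_0\sim\alpha$ independent of $Z\sim\gamma$ is correct.

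There are, however, two genuine gaps. First, your attainment argument ``extract a Koml\'os-type convex combination converging $\nu$-a.e.\ to a convex lsc limit $\psi$, which attains the infimum by Fatou'' silently assumes that the limit $\psi$ remains in $L^1(\nu)$. The paper itself warns, right after the theorem statement, that the dual optimizer need not be $\nu$-integrable, so the expression $\int\psi\,d\nu-\int(\psi^\ast\ast\gamma)^\ast\,d\mu$ can be $\infty-\infty$. Attainment must be understood in the relaxed form
\begin{equation*}
\mathcal{D}(\psi)=\int\Big(\int\psi(y)\,\pi_x(dy)-(\psi^\ast\ast\gamma)^\ast(x)\Big)\,\mu(dx),
\end{equation*}
which is well-defined and nonnegative by \cite[Proposition~4.2]{BVBST23}; your Fatou step needs to be run on this relaxed functional, and lower semicontinuity of $\mathcal{D}$ along the Koml\'os-averaged sequence is a nontrivial lemma, not a one-liner. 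Second, the ``if and only if'' is the heart of the theorem and your proposal only gestures at it: the direction ``irreducible $\Rightarrow$ attainment with $\mu(\operatorname{ri}(\dom\psi))=1$'' is described as ``irreducibility is precisely what prevents the minimizing sequence from concentrating,'' which names the intuition without a mechanism (the actual control comes from the pointwise-boundedness argument that the present paper later abstracts in its Propositions~\ref{prop_psinbar_bound} and~\ref{prop_psinbar_bound_sharp}); and the converse ``attainment $\Rightarrow$ irreducible'' is reduced to ``a direct check shows that the associated coupling charges every product of positive-measure sets,'' but that check is precisely the content of \cite[Theorem~D.1]{BVBST23} (equivalence of the kernels $\pi^{\sbm}_x$ to $\nu$), which is not elementary. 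In short: the architecture is right, but the two load-bearing steps --- relaxed dual attainment and the irreducibility equivalence --- are the hard part and are left unproved.
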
 

Here, the symbol $\ast$ used as a superscript denotes the convex conjugate of a function, otherwise it is the convolution operator; $\operatorname{ri}$ stands for relative interior, and $\dom$ for the domain of a function.

\smallskip

A technical caveat seems in order: it turns out that the optimizer $\psi$ of \eqref{dp_mbb_mcov_dt_f} is not necessarily $\nu$-integrable. Therefore, in order for the difference of the integrals in \eqref{dp_mbb_mcov_dt_f} to be well-defined, attainment of $D(\mu,\nu)$ has to be understood in a ``relaxed'' sense frequently encountered in martingale optimal transport problems; see \cite{BJ16, BNT17, BNS22} and Subsection \ref{subsec_rfotdf} below. 

\smallskip

The following simple observation will be of crucial importance for the analysis of the dual problem \eqref{dp_mbb_mcov_dt_f}: the value of the dual function 
\begin{equation} \label{def_dual_func_gen}
\psi \longmapsto \mathcal{D}(\psi) \coloneqq \int \psi \, d\nu - \int (\psi^{\ast} \ast \gamma)^{\ast} \, d \mu
\end{equation}
remains unchanged if the convex function $\psi$ is replaced by $\psi + \textnormal{aff}$, where $\textnormal{aff} \colon \Rd \rightarrow \R$ is an arbitrary affine function. As a consequence, dual optimizers $\psi$ --- provided they exist --- turn out to be unique, modulo adding affine functions. We thus make the following definition.

\begin{definition} \label{def_gc_ecocfeo} Let $\psi \colon \Rd \rightarrow \mathds{R}$ be a convex function. We write $[\psi]$ for the equivalence class of nonnegative convex functions of the form $\psi + \textnormal{aff}$, where $\textnormal{aff} \colon \Rd \rightarrow \R$ is an affine function.
\end{definition}

We make one more observation which will turn out to be useful: for an optimizing sequence $(\psi_{n})_{n \geqslant 1}$ of lower semicontinuous convex functions for the dual problem \eqref{dp_mbb_mcov_dt_f} only their values on the closed convex hull $\widehat{\supp}(\nu)$ of the support of $\nu$ matter. Indeed, defining
\[
\check{\psi}_{n} \coloneqq 
\begin{cases}
\psi_{n}, & \text{ on } \widehat{\supp}(\nu), \\
+ \infty, & \text{ on } \Rd \setminus \widehat{\supp}(\nu),
\end{cases}
\]
we obtain lower semicontinuous convex functions. It follows from the definition \eqref{def_dual_func_gen} of the dual function that $\mathcal{D}(\check{\psi}_{n}) \leqslant \mathcal{D}(\psi_{n})$, so that $(\check{\psi}_{n})_{n \geqslant 1}$ is an optimizing sequence, too. 

\smallskip

The following theorem is an extension of \cite[Theorem 1.4]{BVBST23}. It characterizes the features of the irreducible case. 

\begin{theorem}[\textsc{Irreducible case}] \label{irr_theom_gen_case_rep} Let $\mu, \nu \in \PP_{2}(\Rd)$ with $\mu \lc \nu$. We denote by $C \coloneqq \widehat{\supp}(\nu)$ the closed convex hull of the support of $\nu$ and by $I \coloneqq \operatorname{ri} C$ its relative interior. Let $(\psi_{n})_{n \geqslant 1}$ be an optimizing sequence of convex functions for the dual problem \eqref{dp_mbb_mcov_dt_f}, satisfying the regularity assumption \eqref{def_eq_cqaff}.
\begin{enumerate}[label=(\arabic*)] 
\item \label{irr_theom_gen_case_rep_1} If the pair $(\mu,\nu)$ is irreducible, then the following assertions hold:
\begin{enumerate}[label=(\roman*)] 
\item \label{irr_theom_gen_case_rep_o} $\mu(I) = 1$ and $\widehat{\supp}(\nu) = C^{\textnormal{B}}(x)$, for $\mu$-a.e.\ $x \in \Rd$, where $C^{\textnormal{B}}(x) \coloneqq \overline{I^{\textnormal{B}}(x)}$ and the set $I^{\textnormal{B}}(x)$ is defined in \eqref{eq_fdotbb_int} below.
\item \label{irr_theom_gen_case_rep_i} There exist representatives $\bar{\psi}_{n} \in [\psi_{n}]$ and there is a lower semicontinuous convex function $\psilim \colon \Rd \rightarrow [0,+\infty]$, which is unique modulo adding affine functions, such that 
\begin{alignat}{2}
\lim_{n \rightarrow \infty} \bar{\psi}_{n}(y) &= \psilim(y) < + \infty, \qquad &&y \in I, \\
\lim_{n \rightarrow \infty} \bar{\psi}_{n}(y) &= \psilim(y) = + \infty, \qquad &&y \in \Rd \setminus C.
\end{alignat}
\item \label{irr_theom_gen_case_rep_ii} The function $\psilim$ is a dual optimizer for the pair $(\mu,\nu)$.
\item \label{irr_theom_gen_case_rep_iii} There is a Bass martingale $M = (M_{t})_{0 \leqslant t \leqslant 1}$ from $\mu$ to $\nu$ as defined in \eqref{eq_def_bm_gcp}, with 
\begin{equation} \label{bomeva}
v \coloneqq \psilim^{\ast} 
\qquad \textnormal{ and } \qquad 
\alpha \coloneqq \nabla (\psilim^{\ast} \ast \gamma)^{\ast}(\mu).
\end{equation}
\end{enumerate}
\item \label{irr_theom_gen_case_rep_2} Conversely, if $\mu(I) = 1$ and the sequence $(\psi_{n}(y))_{n \geqslant 1}$ is bounded, for all $y \in I$, then the pair $(\mu,\nu)$ is irreducible and again the assertions \ref{irr_theom_gen_case_rep_o} -- \ref{irr_theom_gen_case_rep_iii} hold. 
\end{enumerate}
\end{theorem}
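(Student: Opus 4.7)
The plan is to reduce to Theorem \ref{theorem.1.4.o1a} (which already delivers a dual optimizer and the corresponding Bass martingale in the irreducible case) and to add, on top of this, the convergence of an arbitrary dual-optimizing sequence $(\psi_{n})$ to a canonical lower semicontinuous convex limit $\psilim$. Assertion \ref{irr_theom_gen_case_rep_o} is a structural fact about supports: irreducibility together with $\mu \lc \nu$ forces $\mu$ to live on the relative interior of $C = \widehat{\supp}(\nu)$, and the identification of the ``Bass component'' $C^{\textnormal{B}}(x)$ with $C$ for $\mu$-a.e.\ $x$ is the content of the paving results of De March--Touzi in the irreducible case, to which the excerpt explicitly alludes.

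For \ref{irr_theom_gen_case_rep_i}, I would first use the remark preceding the theorem to replace each $\psi_{n}$ by $\check{\psi}_{n}$, so that $\psi_{n} \equiv +\infty$ on $\Rd \setminus C$, and then select nonnegative representatives $\bar\psi_{n} \in [\psi_{n}]$. To anchor the affine freedom, fix $d+1$ affinely independent points $y_{0},\ldots,y_{d} \in I$ and subtract an affine function from each $\psi_{n}$ so that $(\bar\psi_{n}(y_{i}))_{n \geqslant 1}$ is bounded for every $i$. Convexity yields a uniform upper bound on $\conv\{y_{0},\ldots,y_{d}\}$; a standard covering-by-simplices argument propagates this to every compact $K \subset I$, and combined with $\bar\psi_{n} \geqslant 0$ it delivers local equi-Lipschitz estimates throughout $I$. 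A diagonal extraction on a countable dense subset of $I$ then produces, via Arzel\`a--Ascoli, a locally uniformly convergent subsequence with convex, nonnegative, finite limit $\psilim$ on $I$. Lower semicontinuity and the fact that $\bar\psi_{n} \equiv +\infty$ outside $C$ force $\psilim \equiv +\infty$ on $\Rd \setminus C$.

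For \ref{irr_theom_gen_case_rep_ii} one passes to the limit in the dual functional $\mathcal{D}$ of \eqref{def_dual_func_gen}: nonnegativity and Fatou give $\int \psilim \, d\nu \leqslant \liminf_{n} \int \bar\psi_{n} \, d\nu$, while continuity of Gaussian convolution and of the Legendre transform under the relevant monotone envelopes yields $\int (\psilim^{\ast} \ast \gamma)^{\ast}\, d\mu \geqslant \limsup_{n} \int (\bar\psi_{n}^{\ast} \ast \gamma)^{\ast}\, d\mu$. Combined, $\mathcal{D}(\psilim) \leqslant \liminf_{n} \mathcal{D}(\bar\psi_{n}) = D(\mu,\nu)$, so $\psilim$ is optimal. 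Uniqueness modulo affine of the dual optimizer in the irreducible case, provided by Theorem \ref{theorem.1.4.o1a}, then upgrades subsequential to full-sequence convergence: any two subsequential limits must agree up to affine, and the anchoring at the $y_{i}$ pins down the affine representative. Assertion \ref{irr_theom_gen_case_rep_iii} is then a direct invocation of Theorem \ref{theorem.1.4.o1a} with $\psi = \psilim$, $v = \psilim^{\ast}$, and $\alpha = \nabla(\psilim^{\ast} \ast \gamma)^{\ast}(\mu)$ as prescribed in \eqref{bomeva}.

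For the converse \ref{irr_theom_gen_case_rep_2}, pointwise boundedness of $(\psi_{n}(y))$ at every $y \in I$ is precisely the input needed to run the normalization-and-compactness argument of \ref{irr_theom_gen_case_rep_i} \emph{without} first invoking irreducibility. It produces a lower semicontinuous convex $\psilim$ finite on all of $I$, so $I \subseteq \operatorname{ri}(\dom \psilim)$, and the hypothesis $\mu(I) = 1$ gives $\mu(\operatorname{ri}(\dom \psilim)) = 1$; the Fatou passage of \ref{irr_theom_gen_case_rep_ii} shows $\psilim$ is a dual optimizer. The ``only if'' part of Theorem \ref{theorem.1.4.o1a} then forces $(\mu,\nu)$ to be irreducible, after which \ref{irr_theom_gen_case_rep_o}--\ref{irr_theom_gen_case_rep_iii} follow from part \ref{irr_theom_gen_case_rep_1}. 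The main obstacle throughout is the rigorous passage to the limit in $\mathcal{D}$: the attainment of $D(\mu,\nu)$ has to be read in the relaxed sense (since $\psi$ need not be $\nu$-integrable), and the interplay of Gaussian convolution with the Legendre transform under increasing limits of convex functions must be controlled carefully enough to give both the Fatou and the reverse Fatou bounds needed to conclude optimality of $\psilim$.
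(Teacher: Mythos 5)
Your approach differs substantially from the paper's. The paper proves this theorem almost entirely by reduction: part~\ref{irr_theom_gen_case_rep_1} cites Theorem~\ref{theorem.1.4.o1a} to obtain a dual optimizer $\psiopt$ and then reads off $\operatorname{ri}(\dom\psiopt) = I$ from the Bass-martingale relations $(\nabla \psiopt^{\ast} \ast \gamma)(\alpha) = \mu$ and $\nabla \psiopt^{\ast}(\alpha \ast \gamma) = \nu$, after which assertions~\ref{irr_theom_gen_case_rep_o}--\ref{irr_theom_gen_case_rep_iii} follow from Theorem~\ref{gen_case_theom_gen_case_int}; part~\ref{irr_theom_gen_case_rep_2} follows directly from \cite[Proposition 7.20]{BVBST23} and \cite[Corollary 7.21]{BVBST23}. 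You instead rebuild the convergence of $(\bar\psi_n)$ and the optimality of $\psilim$ from scratch via normalization, equi-Lipschitz estimates, Arzel\`a--Ascoli and a Fatou-type passage to the limit; this is essentially an attempt to re-prove \cite[Proposition 7.20]{BVBST23}, the engine the paper deliberately outsources.

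There is a genuine gap at the heart of your argument, and you flag it yourself: the inequality $\int (\psilim^{\ast} \ast \gamma)^{\ast} \, d\mu \geqslant \limsup_n \int (\bar\psi_n^{\ast} \ast \gamma)^{\ast} \, d\mu$ is not a consequence of ``continuity of Gaussian convolution and of the Legendre transform,'' because the convergence $\bar\psi_n \to \psilim$ is not monotone, the Legendre transform is only inner-semicontinuous under epi-convergence, and convolution with $\gamma$ does not commute with these limits without uniform integrability control. Without this bound you cannot conclude $\mathcal{D}(\psilim) \leqslant D(\mu,\nu)$, and hence cannot conclude that $\psilim$ is a dual optimizer --- which in turn underlies your treatment of both~\ref{irr_theom_gen_case_rep_ii} and the ``only-if'' step in~\ref{irr_theom_gen_case_rep_2}. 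Establishing this semicontinuity of $\mathcal{D}$ in the relaxed sense is precisely the content of \cite[Proposition 7.20]{BVBST23}; merely asserting that it ``must be controlled carefully enough'' does not close the gap.

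Two secondary points. First, for assertion~\ref{irr_theom_gen_case_rep_o} you appeal to De March--Touzi, but $C^{\textnormal{B}}(x)$ is defined from the particular sequence $(\psi_n)$ via \eqref{eq_fdotbb_int}; the identification $C^{\textnormal{B}}(x) = \widehat{\supp}(\nu)$ for this specific paving comes from Proposition~\ref{prop_psinbar_bound_sharp} (via the optimizing property of $(\psi_n)$), not from the abstract De March--Touzi paving. Second, your Arzel\`a--Ascoli argument yields only subsequential convergence, whereas the statement asserts convergence of the full sequence; your remedy (anchoring at $d+1$ affinely independent points and invoking affine-uniqueness of dual optimizers) is a correct idea, but it is circular until the Fatou step is fixed, since identifying the subsequential limit as \emph{the} dual optimizer is exactly what requires the lower semicontinuity of $\mathcal{D}$.
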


Most parts of this theorem were already shown in \cite{BVBST23}. We will see in Section \ref{sec_1b_potmr} that Theorem \ref{irr_theom_gen_case_rep} can be viewed as a special case of Theorem \ref{gen_case_theom_gen_case_int} below, which describes the non-irreducible situation.

\subsection{The main theorem: general case}

We turn to the case of a pair $(\mu,\nu)$ which is not necessarily irreducible. We refer to Section \ref{section_examples} for several illustrative examples of this case. As indicated by the title of the present paper, our main result is that the stretched Brownian motion $\pi^{\sbm} \in \MT(\mu,\nu)$ decomposes into a family of Bass martingales. For this purpose, we fix an optimizing sequence $(\psi_{n})_{n \geqslant 1}$ of convex functions for the dual problem \eqref{dp_mbb_mcov_dt_f}, satisfying the regularity assumption \eqref{def_eq_cqaff}. We also have to ensure that the sequence $(\mathcal{D}(\psi_{n}))_{n \geqslant 1}$ decreases sufficiently fast to the optimal value $D(\mu,\nu)$, in the sense that
\begin{equation} \label{eq_gen_case_fcc_fti}
\sum_{n \geqslant 1} \big( \mathcal{D}(\psi_{n}) - D(\mu,\nu) \big) < + \infty.
\end{equation}
We derive from the sequence $(\psi_{n})_{n \geqslant 1}$ the \textit{Bass paving} $\{I^{\textnormal{B}}(x)\}_{x \in \Rd}$ of $\Rd$ into disjoint relatively open convex sets, which are irreducible under any $\pi \in \MT(\mu,\nu)$, via
\begin{equation} \label{eq_fdotbb_int}
I^{\textnormal{B}}(x) \coloneqq \operatorname{ri} 
\bigg\{ y \in \Rd \colon \ \sup_{n \geqslant 1} \, \sup_{\substack{\bar{\psi}_{n} \in [\psi_{n}], \\ \bar{\psi}_{n}(x) \leqslant 1}} \, \bar{\psi}_{n}(y) < + \infty \bigg\}, \qquad x \in \Rd.
\end{equation}
In Propositions \ref{prop_sbm_paving} and \ref{prop_psinbar_bound} below we will show that \eqref{eq_fdotbb_int} indeed defines a paving of $\Rd$ enjoying the following properties:
\begin{enumerate}[label=(\roman*)] 
\item $x \in I^{\textnormal{B}}(x)$,
\item $I^{\textnormal{B}}(x) = I^{\textnormal{B}}(x_{1})$ for $x_{1} \in I^{\textnormal{B}}(x)$,
\item $I^{\textnormal{B}}(x) \cap I^{\textnormal{B}}(x_{1}) = \varnothing$ for $x_{1} \notin I^{\textnormal{B}}(x)$.
\end{enumerate}

\smallskip

We can thus decompose the measure $\mu$ into its restrictions $\mu^{I^{\textnormal{B}}}$ to the sets $I^{\textnormal{B}}$. For example, if there are at most countably many elements $\{I_{j}^{\textnormal{B}}\}_{j \geqslant 1}$, we have
\[
\mu(\, \cdot \,) = \sum_{j=1}^{\infty} \mu(I_{j}^{\textnormal{B}}) \, \mu^{I_{j}^{\textnormal{B}}}(\, \cdot \,), \qquad 
\mu^{I_{j}^{\textnormal{B}}}(\, \cdot \,) \coloneqq \frac{1}{\mu(I_{j}^{\textnormal{B}})} \, \mu\vert_{I_{j}^{\textnormal{B}}}(\, \cdot \,).
\]

\smallskip

In general, the Bass paving $\{I^{\textnormal{B}}(x)\}_{x \in \Rd}$ consists of uncountably many elements, see Example \ref{example_circles} below. Following \cite{DMT19}, we can still decompose the probability measure $\mu$ as 
\begin{equation} \label{eq_gen_case_deco_mu_fit}
\mu(\, \cdot \,) 
= \int_{\widehat{\mathcal{K}}} \, \mu^{I^{\textnormal{B}}}(\, \cdot \,) \ \kappa^{\textnormal{B}}(dI^{\textnormal{B}}),
\end{equation}
where $\kappa^{\textnormal{B}}$ is a Borel probability measure on the Polish space $\widehat{\mathcal{K}}$ of closed convex subsets of $\Rd$, equipped with the Wijsman topology \cite{Bee91}. Here, the collection $\widehat{\mathcal{K}}$ of closed convex sets is isomorphically identified with the collection $\operatorname{ri} \widehat{\mathcal{K}} \coloneqq \{ \operatorname{ri} C \colon C \in \widehat{\mathcal{K}}\}$ of their relative interiors. For each $x \in \Rd$, we denote by $C^{\textnormal{B}}(x)$ the closure of $I^{\textnormal{B}}(x)$.

\smallskip

Using the stretched Brownian motion $\pi^{\sbm} \in \MT(\mu,\nu)$, we define
\begin{equation} \label{eq_gen_case_deco_nu_fit}
\nu^{I^{\textnormal{B}}}(\, \cdot \,)  \coloneqq 
\int_{I^{\textnormal{B}}} \, \pi_{x}^{\sbm}(\, \cdot \,)  \ \mu^{I^{\textnormal{B}}}(dx).
\end{equation}
We thus obtain a decomposition of the probability measure $\nu$ via
\begin{equation} \label{eq_gen_case_deco_nu_fit_decomp}
\nu(\, \cdot \,) 
= \int_{\widehat{\mathcal{K}}} \, \nu^{I^{\textnormal{B}}}(\, \cdot \,) \ \kappa^{\textnormal{B}}(dI^{\textnormal{B}}).
\end{equation}

\smallskip

We now can formulate our main result.

\begin{theorem}[\textsc{General case}] \label{gen_case_theom_gen_case_int} Let $\mu, \nu \in \PP_{2}(\Rd)$ with $\mu \lc \nu$. Let $(\psi_{n})_{n \geqslant 1}$ be a sequence of convex functions satisfying the regularity assumption \eqref{def_eq_cqaff} and the ``fast convergence condition'' \eqref{eq_gen_case_fcc_fti}. Then there is a Bass paving $\{I^{\textnormal{B}}(x)\}_{x \in \Rd}$, a Borel probability measure $\kappa^{\textnormal{B}}$ on $\widehat{\mathcal{K}}$, and $\kappa^{\textnormal{B}}$-a.s.\ unique families of probability measures $\{\mu^{I^{\textnormal{B}}}\}_{I^{\textnormal{B}} \in \, \widehat{\mathcal{K}}}$ and $\{\nu^{I^{\textnormal{B}}}\}_{I^{\textnormal{B}} \in \, \widehat{\mathcal{K}}}$ in $\PP_{2}(\Rd)$, as in \eqref{eq_gen_case_deco_mu_fit} and \eqref{eq_gen_case_deco_nu_fit} above, such that the following assertions hold for $\kappa^{\textnormal{B}}$-a.e.\ $I^{\textnormal{B}} \in \widehat{\mathcal{K}}$:
\begin{enumerate}[label=(\roman*)] 
\item \label{gen_theom_gen_case_rep_o} $\mu^{I^{\textnormal{B}}} \lc \nu^{I^{\textnormal{B}}}$, $\mu^{I^{\textnormal{B}}}(I^{\textnormal{B}}) = 1$, and $C^{\textnormal{B}} \coloneqq \overline{I}^{\textnormal{B}} = \widehat{\supp}(\nu^{I^{\textnormal{B}}})$.
\item \label{gen_theom_gen_case_rep_i} There exist representatives $\bar{\psi}_{n}^{I^{\textnormal{B}}} \in [\psi_{n}]$ and there is a lower semicontinuous convex function $\psilim^{I^{\textnormal{B}}} \colon \Rd \rightarrow [0,+\infty]$, which is unique modulo adding affine functions, such that 
\begin{alignat}{2}
\lim_{n \rightarrow \infty} \bar{\psi}_{n}^{I^{\textnormal{B}}}(y) &= \psilim^{I^{\textnormal{B}}}(y) < + \infty, \qquad &&y \in I^{\textnormal{B}}, \label{gen_case_theom_gen_case_int_conv_i} \\
\lim_{n \rightarrow \infty} \bar{\psi}_{n}^{I^{\textnormal{B}}}(y) &= \psilim^{I^{\textnormal{B}}}(y) = + \infty, \qquad &&y \in \Rd \setminus C^{\textnormal{B}}. \label{gen_case_theom_gen_case_int_conv_ii}
\end{alignat}
\item \label{gen_theom_gen_case_rep_ii} The function $\psilim^{I^{\textnormal{B}}}$ is a dual optimizer for the pair $(\mu^{I^{\textnormal{B}}},\nu^{I^{\textnormal{B}}})$.
\item \label{gen_theom_gen_case_rep_iii} There is a Bass martingale $M^{I^{\textnormal{B}}} = (M_{t}^{I^{\textnormal{B}}})_{0 \leqslant t \leqslant 1}$ from $\mu^{I^{\textnormal{B}}}$ to $\nu^{I^{\textnormal{B}}}$ as defined in \eqref{eq_def_bm_gcp}, with 
\begin{equation} \label{eq_gen_theom_gen_case_rep_iii}
v^{I^{\textnormal{B}}} \coloneqq (\psilim^{I^{\textnormal{B}}})^{\ast}
\qquad \textnormal{ and } \qquad 
\alpha^{I^{\textnormal{B}}} \coloneqq \nabla \big((\psilim^{I^{\textnormal{B}}})^{\ast} \ast \gamma\big)^{\ast}(\mu^{I^{\textnormal{B}}}).    
\end{equation}
The Bass martingale $M^{I^{\textnormal{B}}}$ coincides with the stretched Brownian motion $\pi^{\sbm}$ restricted to $(\mu^{I^{\textnormal{B}}},\nu^{I^{\textnormal{B}}})$.
\item \label{gen_theom_gen_case_rep_iv} The pair $(\mu^{I^{\textnormal{B}}},\nu^{I^{\textnormal{B}}})$ is irreducible.
\end{enumerate}
\end{theorem}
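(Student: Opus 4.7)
The plan is to establish that \eqref{eq_fdotbb_int} actually defines a paving of $\Rd$ with the three enumerated properties, then to reduce the global theorem to the irreducible case (Theorem \ref{irr_theom_gen_case_rep}) applied fibrewise. Property~(i) is immediate since the sup in \eqref{eq_fdotbb_int} evaluated at $y=x$ is at most $1$. Property~(ii) amounts to showing that once $y \in I^{\textnormal{B}}(x)$ the relation becomes symmetric: by subtracting a suitable affine minorant from each $\bar\psi_n$ one can rescale so that $\bar\psi_n(x_1)\leqslant 1$ for $x_1 \in I^{\textnormal{B}}(x)$ while preserving convexity and the equivalence class $[\psi_n]$, so the sup defining $I^{\textnormal{B}}(x_1)$ controls the sup defining $I^{\textnormal{B}}(x)$ up to a factor depending only on the relative positions of $x,x_1$ in the sublevel set. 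Property~(iii), disjointness, then follows by contradiction from the two ``coincidence'' statements.

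Next I would invoke the framework of \cite{DMT19} to obtain the disintegration \eqref{eq_gen_case_deco_mu_fit}: once the map $x \mapsto C^{\textnormal{B}}(x)$ is shown to be Borel into $\widehat{\mathcal{K}}$ (by writing the sup in \eqref{eq_fdotbb_int} as a countable supremum over a separating family in $[\psi_n]$), the measure $\kappa^{\textnormal{B}}$ is the pushforward of $\mu$, and the conditional measures $\mu^{I^{\textnormal{B}}}$ come from standard disintegration on Polish spaces. The measures $\nu^{I^{\textnormal{B}}}$ are defined through $\pi^{\sbm}$ by \eqref{eq_gen_case_deco_nu_fit}, and \eqref{eq_gen_case_deco_nu_fit_decomp} follows from $\pi^{\sbm}(dx,dy) = \pi_{x}^{\sbm}(dy)\,\mu(dx)$. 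A key subclaim, needing a separate argument, is that every $I^{\textnormal{B}}(x)$ is stable under every $\pi \in \MT(\mu,\nu)$, i.e.\ $\pi_x^{\sbm}(C^{\textnormal{B}}(x))=1$ for $\mu$-a.e.\ $x$; this uses that the diverging barrier on $\Rd \setminus C^{\textnormal{B}}(x)$ provided by $(\bar\psi_n)$ would otherwise cause $\mathcal{D}(\psi_n) \to +\infty$ on a positive fraction of mass. From this one reads off assertion~\ref{gen_theom_gen_case_rep_o}: the martingale property of $\pi^{\sbm}$ gives $\mu^{I^{\textnormal{B}}} \lc \nu^{I^{\textnormal{B}}}$, and $\widehat{\supp}(\nu^{I^{\textnormal{B}}})=C^{\textnormal{B}}$.

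The crucial step is to transfer the fast convergence condition \eqref{eq_gen_case_fcc_fti} to $\kappa^{\textnormal{B}}$-a.e.\ component. The dual functional $\mathcal{D}$ in \eqref{def_dual_func_gen} splits additively along the paving, in the sense that
\[
\mathcal{D}(\psi_{n}) - D(\mu,\nu) = \int_{\widehat{\mathcal{K}}} \big( \mathcal{D}^{I^{\textnormal{B}}}(\psi_{n}) - D(\mu^{I^{\textnormal{B}}},\nu^{I^{\textnormal{B}}}) \big)\,\kappa^{\textnormal{B}}(dI^{\textnormal{B}}),
\]
where $\mathcal{D}^{I^{\textnormal{B}}}$ is the local dual functional on $(\mu^{I^{\textnormal{B}}},\nu^{I^{\textnormal{B}}})$. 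Combined with \eqref{eq_gen_case_fcc_fti} and a Borel--Cantelli argument, this yields for $\kappa^{\textnormal{B}}$-a.e.\ $I^{\textnormal{B}}$ a locally summable gap, so $(\psi_n)$ restricted to the fibre remains an optimizing sequence. On such a fibre, choose $\bar\psi_n^{I^{\textnormal{B}}} \in [\psi_n]$ by an affine shift that fixes the value at a Borel-selected base point $x_0 \in I^{\textnormal{B}}$; the very definition of $I^{\textnormal{B}}$ as a level set of the sup-function then forces $\sup_n \bar\psi_n^{I^{\textnormal{B}}}$ to be finite on $I^{\textnormal{B}}$ and (by the diverging-barrier argument) infinite on $\Rd \setminus C^{\textnormal{B}}$, giving \eqref{gen_case_theom_gen_case_int_conv_i}--\eqref{gen_case_theom_gen_case_int_conv_ii} after extracting a convex limit $\psilim^{I^{\textnormal{B}}}$.

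With the bounded fibrewise optimizing sequence in hand, assertions \ref{gen_theom_gen_case_rep_ii}--\ref{gen_theom_gen_case_rep_iv} follow by applying Theorem \ref{irr_theom_gen_case_rep}\ref{irr_theom_gen_case_rep_2} to $(\mu^{I^{\textnormal{B}}},\nu^{I^{\textnormal{B}}})$: irreducibility of the pair is exactly \ref{gen_theom_gen_case_rep_iv}, the dual optimality and Bass martingale with data \eqref{eq_gen_theom_gen_case_rep_iii} are \ref{gen_theom_gen_case_rep_ii} and the first half of \ref{gen_theom_gen_case_rep_iii}. The coincidence of $M^{I^{\textnormal{B}}}$ with $\pi^{\sbm}$ restricted to the fibre comes from uniqueness of stretched Brownian motion between fixed marginals: the fibrewise restriction of $\pi^{\sbm}$ belongs to $\MT(\mu^{I^{\textnormal{B}}},\nu^{I^{\textnormal{B}}})$ and achieves the local value $D(\mu^{I^{\textnormal{B}}},\nu^{I^{\textnormal{B}}})$, hence is the SBM on the fibre, which by the irreducible case equals the Bass martingale. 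The hard part I anticipate is the measurability bookkeeping: showing that $x \mapsto I^{\textnormal{B}}(x)$ is Borel, that base-point selections and affine shifts producing $\bar\psi_n^{I^{\textnormal{B}}}$ can be made Borel in $I^{\textnormal{B}}$, and that the above ``additive splitting'' of $\mathcal{D}(\psi_n) - D(\mu,\nu)$ survives the passage through the uncountable Wijsman-Polish index $\widehat{\mathcal{K}}$.
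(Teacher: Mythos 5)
Your proposal follows essentially the same architecture as the paper's proof: establish the paving properties of $I^{\textnormal{B}}(\cdot)$ (the paper's Propositions \ref{prop_sbm_paving} and \ref{prop_psinbar_bound}), Borel measurability of $x \mapsto C^{\textnormal{B}}(x)$ via a countable separating family of representatives (Proposition \ref{sbm_map_prop}), disintegration (Lemma \ref{lem_dis_theo}), the additive splitting of the suboptimality gap across the Wijsman-Polish index set (Lemma \ref{prop_loc_fast_conv}), and finally the fibrewise conclusion via a Bass martingale on each irreducible component. Two remarks are in order.

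First, a logical concern about the final step: you propose to close the argument by ``applying Theorem \ref{irr_theom_gen_case_rep}\ref{irr_theom_gen_case_rep_2} to $(\mu^{I^{\textnormal{B}}},\nu^{I^{\textnormal{B}}})$.'' In the paper, Theorem \ref{irr_theom_gen_case_rep} is derived as a corollary of Theorem \ref{gen_case_theom_gen_case_int} (its proof in Section \ref{sec_1b_potmr} begins by invoking Theorem \ref{gen_case_theom_gen_case_int}). So as written your route is circular. The fix is to bypass Theorem \ref{irr_theom_gen_case_rep} and instead invoke the underlying building blocks directly: the local statement that an optimizing sequence bounded pointwise on $I^{\textnormal{B}}$ converges to a dual optimizer and yields $\widehat{\supp}(\nu^{I^{\textnormal{B}}})=C^{\textnormal{B}}$ is exactly what Propositions \ref{prop_psinbar_bound_sharp} and \ref{prop_psinbar_bound_sharper} establish (relying on \cite[Proposition 7.20]{BVBST23} and \cite[Lemma 7.9]{BVBST23}), and the passage to a Bass martingale then uses Theorem \ref{theorem.1.4.o1a} from \cite{BVBST23}, not the irreducible-case Theorem \ref{irr_theom_gen_case_rep} of the present paper. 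Once that substitution is made the argument is sound and matches the paper's Proposition \ref{lem_cruc_incl}.

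Second, two minor imprecisions: (a) the step you label ``Borel--Cantelli'' is really just Tonelli plus a.s.\ finiteness of a nonnegative integrand (as the paper writes out: $\sum_n \mathcal{S}(\psi_n)<\infty$ implies $\sum_n \mathcal{S}^{I^{\textnormal{B}}}(\psi_n)<\infty$ for $\kappa^{\textnormal{B}}$-a.e.\ fibre, hence $\mathcal{S}^{I^{\textnormal{B}}}(\psi_n)\to 0$); (b) the stability claim ``the diverging barrier would otherwise cause $\mathcal{D}(\psi_n)\to+\infty$'' is not literally the mechanism, since $\mathcal{D}(\psi_n)$ is globally bounded by assumption; rather, the argument in Proposition \ref{prop_psinbar_bound_sharp} is that the uniform bound of \cite[Lemma 7.9]{BVBST23} on $\int\int(\psi_n(y)-\psi_n(x))\,\pi_x^{\sbm}(dy)\,\mu(dx)$ is contradicted by a Jensen-barycenter estimate on a set of positive $\pi^{\sbm}$-mass escaping $C^{\textnormal{B}}$. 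Neither of these is fatal, but the precise mechanism differs from your sketch.
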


A remarkable message of Theorem \ref{gen_case_theom_gen_case_int} is that an \textit{arbitrary} sequence $(\psi_{n})_{n \geqslant 1}$ of convex functions of class \eqref{def_eq_cqaff}, such that $(\mathcal{D}(\psi_{n}))_{n \geqslant 1}$ converges sufficiently fast (see \eqref{eq_gen_case_fcc_fti}) to the optimal value $D(\mu,\nu)$, already contains the \textit{full information} about the structure \eqref{eq_gen_theom_gen_case_rep_iii} of the family of Bass martingales $M^{I^{\textnormal{B}}}$ from $\mu^{I^{\textnormal{B}}}$ to $\nu^{I^{\textnormal{B}}}$. Note that in \eqref{gen_case_theom_gen_case_int_conv_i} and \eqref{gen_case_theom_gen_case_int_conv_ii} we did \textit{not} have to pass to subsequences or convex combinations of the sequence $(\psi_{n})_{n \geqslant 1}$ in order to obtain pointwise convergence. 

\medskip

We can also show that the Bass paving $\{I^{\textnormal{B}}(x)\}_{x \in \Rd}$ satisfies the maximality condition \eqref{gen_case_cor_gen_case_int_dmt_eq} below, as defined in \cite[Theorem 2.1]{DMT19}. In particular, Corollary \ref{gen_case_cor_gen_case_int_dmt} states that the martingale transport $\pi^{\sbm} \in \MT(\mu,\nu)$ is an element of the class of maximal transports $\pi \in \MT(\mu,\nu)$, considered by H.\ De March and N.\ Touzi in \cite{DMT19} (compare also \cite{OS17}).

\begin{corollary} \label{gen_case_cor_gen_case_int_dmt} Let $\pi \in \MT(\mu,\nu)$ be an arbitrary martingale transport. For $\mu$-a.e.\ $x \in \Rd$, we have that
\begin{equation} \label{gen_case_cor_gen_case_int_dmt_eq}
\widehat{\supp}(\pi_{x}) \subseteq \widehat{\supp}(\pi_{x}^{\sbm}) = C^{\textnormal{B}}(x).
\end{equation}
\end{corollary}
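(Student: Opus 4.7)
The statement consists of two claims: for $\mu$-a.e.\ $x \in \Rd$, the equality $\widehat{\supp}(\pi_x^{\sbm}) = C^{\textnormal{B}}(x)$, and the inclusion $\widehat{\supp}(\pi_x) \subseteq C^{\textnormal{B}}(x)$ for an arbitrary $\pi \in \MT(\mu,\nu)$. The plan is to treat both via the same mechanism: the Bass paving \eqref{eq_fdotbb_int} obstructs mass transport away from $C^{\textnormal{B}}(x)$. Indeed, if $y \notin C^{\textnormal{B}}(x) = \overline{I^{\textnormal{B}}(x)}$, then by lower semicontinuity and convexity there exist $n$ and a representative $\bar{\psi}_{n} \in [\psi_{n}]$ with $\bar{\psi}_{n}(x) \leq 1$ while $\bar{\psi}_{n}$ is arbitrarily large on a neighborhood of $y$; the fast-convergence condition \eqref{eq_gen_case_fcc_fti} will be used to prevent any martingale transport from placing mass on such a neighborhood.

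First I would establish $\widehat{\supp}(\pi_x) \subseteq C^{\textnormal{B}}(x)$ for arbitrary $\pi \in \MT(\mu,\nu)$. Applying Jensen's inequality to the conditional kernel $\pi_x$, whose barycenter is $x$, gives
\[
\int \bar{\psi}_{n}(y) \, \pi_x(dy) \;\geq\; \bar{\psi}_{n}(x)
\]
for any convex representative $\bar{\psi}_n \in [\psi_n]$. Selecting representatives with $\bar{\psi}_{n}(x) \leq 1$ for $\mu$-a.e.\ $x$ (measurably in $x$, using the selection machinery underlying \eqref{eq_fdotbb_int}), and combining with the duality gap $\mathcal{D}(\psi_n) - D(\mu,\nu)$, the summability \eqref{eq_gen_case_fcc_fti} together with a Borel--Cantelli argument yields $\sup_n \int \bar{\psi}_n(y) \, \pi_x(dy) < + \infty$ for $\mu$-a.e.\ $x$. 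This uniform bound forces $\pi_x$ to concentrate on $\{ y : \sup_n \bar{\psi}_n(y) < + \infty \}$, which by \eqref{eq_fdotbb_int} is contained in $C^{\textnormal{B}}(x)$.

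Second, for the equality $\widehat{\supp}(\pi_x^{\sbm}) = C^{\textnormal{B}}(x)$, the inclusion $\subseteq$ is the previous step applied to $\pi = \pi^{\sbm}$. For the reverse inclusion I would invoke Theorem \ref{gen_case_theom_gen_case_int}, part \ref{gen_theom_gen_case_rep_iii}: restricted to the component $I^{\textnormal{B}}$, the kernel $\pi_x^{\sbm}$ is the one-step law of the Bass martingale $M^{I^{\textnormal{B}}}$, i.e.\ the pushforward of a shifted standard Gaussian under $\nabla v^{I^{\textnormal{B}}} = \nabla (\psilim^{I^{\textnormal{B}}})^{\ast}$, the shift $b(x)$ being determined by the transport map $\nabla((\psilim^{I^{\textnormal{B}}})^{\ast} \ast \gamma)^{\ast}$ from \eqref{eq_gen_theom_gen_case_rep_iii}. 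Since the Gaussian has full support in $\Rd$, the closed convex hull of the support of $\pi_x^{\sbm}$ equals the closure of $\nabla(\psilim^{I^{\textnormal{B}}})^{\ast}(\Rd)$, and by standard convex analysis this coincides with $\overline{\dom \psilim^{I^{\textnormal{B}}}} = C^{\textnormal{B}}$.

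The hard part will be the measurable selection of the affine representatives $\bar{\psi}_n \in [\psi_n]$ so as to simultaneously meet the normalization $\bar{\psi}_n(x) \leq 1$ from \eqref{eq_fdotbb_int} and retain the quantitative control afforded by \eqref{eq_gen_case_fcc_fti}; this is essentially the book-keeping carried out by the propositions that set up the paving structure preceding Theorem \ref{gen_case_theom_gen_case_int}. Once that machinery is available, the Borel--Cantelli argument of Step 1 and the convex-analytic identification in Step 2, combined with the decompositions \eqref{eq_gen_case_deco_mu_fit} and \eqref{eq_gen_case_deco_nu_fit}, yield the corollary.
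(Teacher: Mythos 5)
Your overall strategy — Jensen's inequality, boundedness of the dual functions at $x$, divergence outside the cell, and a Fatou-type argument to contradict an a-priori dual bound — matches the paper's, and for the equality part your appeal to irreducibility of $(\mu^{I^{\textnormal{B}}},\nu^{I^{\textnormal{B}}})$ is in the right spirit. However, Step 1 has a genuine gap. You assert that $\{ y \colon \sup_{n} \bar{\psi}_{n}(y) < +\infty \}$, for a \emph{fixed} choice of representatives with $\bar{\psi}_{n}(x) \leqslant 1$, is contained in $C^{\textnormal{B}}(x)$. But Definition \eqref{eq_fdotbb_int} takes the supremum over \emph{all} such representatives before taking relative interior and closure; for a single fixed sequence, the boundedness set is a priori only \emph{larger} than the set in \eqref{eq_fdotbb_int}, and there is nothing forcing it to lie in $C^{\textnormal{B}}(x)$. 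Moreover, even granting that containment, the bound $\sup_{n} \int \bar{\psi}_{n}\, d\pi_{x} < +\infty$ does not by itself exclude $\pi_{x}$ giving mass to $\{ y \colon \sup_{n} \bar{\psi}_{n}(y) = +\infty \}$ — supremum and integral do not interchange. What is actually needed, and what the paper uses, is \emph{pointwise divergence} $\lim_{n} \bar{\psi}_{n}^{x}(y) = +\infty$ for $y \in \Rd \setminus C^{\textnormal{B}}(x)$, a property of the particular representatives $\bar{\psi}_{n}^{x}$ delivered by \ref{irr_theom_gen_case_rep_i} of Theorem \ref{irr_theom_gen_case_rep} (ultimately \cite[Proposition 7.20]{BVBST23}). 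With those representatives, Fatou's lemma applied to $\int_{\tilde{A}}\int_{\Rd \setminus C^{\textnormal{B}}(x)} \bar{\psi}_{n}^{x}(y)\, \pi_{x}(dy)\,\mu(dx)$ gives $+\infty$ in the limit, contradicting the a-priori integral bound of \cite[Lemma 7.9]{BVBST23} — which, since $\int (\bar{\psi}_{n}^{x}(y)-\bar{\psi}_{n}^{x}(x))\,\pi_{x}(dy)$ is invariant under the $x$-dependent affine corrections, applies directly. No Borel--Cantelli refinement to a $\mu$-a.e.\ pointwise bound is required, and indeed the fast-convergence condition \eqref{eq_gen_case_fcc_fti} enters only through its role in establishing Theorem \ref{gen_case_theom_gen_case_int}, not in this final contradiction.

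As a secondary point, for the equality $\widehat{\supp}(\pi_{x}^{\sbm}) = C^{\textnormal{B}}(x)$ the paper does not argue through the range of the Brenier map: it cites \cite[Theorem D.1]{BVBST23}, which says that on an irreducible component $\pi_{x}^{\sbm}$ is \emph{equivalent as a measure} to $\nu^{I^{\textnormal{B}}(x)}$, so the two have the same support, and then uses item \ref{gen_theom_gen_case_rep_o} of Theorem \ref{gen_case_theom_gen_case_int}. Your convex-analytic route is plausible but leaves a nontrivial claim to verify — that the closed convex hull of the essential range of $\nabla(\psilim^{I^{\textnormal{B}}})^{\ast}$ equals $\overline{\dom\,\psilim^{I^{\textnormal{B}}}}$ within the relevant affine subspace — whereas the measure-equivalence argument gives the support identity at once.
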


\medskip

For the convenience of the reader we will present a heuristic outline of the proof of Theorem \ref{gen_case_theom_gen_case_int} in the next subsection. The rigorous proofs of Theorem \ref{gen_case_theom_gen_case_int} and Corollary \ref{gen_case_cor_gen_case_int_dmt} are provided in Section \ref{sec_1b_potmr}.

\subsection{A guided tour through the proof of Theorem \ref{gen_case_theom_gen_case_int}}

Let $(\psi_{n})_{n \geqslant 1}$ be a sequence of convex functions satisfying the regularity assumption \eqref{def_eq_cqaff} and the ``fast convergence condition'' \eqref{eq_gen_case_fcc_fti}. We define the Bass paving $\{I^{\textnormal{B}}(x)\}_{x \in \Rd}$ as in \eqref{eq_fdotbb_int} above. We will show in Propositions \ref{prop_sbm_paving} and \ref{prop_psinbar_bound} below that this indeed defines a paving of $\Rd$ into relatively open convex sets.

\smallskip

Next, we show that the function $C^{\textnormal{B}} \colon \Rd \rightarrow \widehat{\mathcal{K}}$ given by 
\begin{equation} \label{eq_gen_int_map_bm}
x \longmapsto C^{\textnormal{B}}(x) = \overline{I^{\textnormal{B}}(x)},
\end{equation}
where $\widehat{\mathcal{K}}$ is equipped with the Wijsman topology, is Borel measurable. We then define $\kappa^{\textnormal{B}} \coloneqq C^{\textnormal{B}}(\mu)$ to be the pushforward measure of $\mu$ under the function \eqref{eq_gen_int_map_bm}, which gives a Borel probability measure on $\widehat{\mathcal{K}}$. Applying the disintegration theorem yields the $\kappa^{\textnormal{B}}$-a.s.\ unique family of probability measures $\{\mu^{I^{\textnormal{B}}}\}_{I^{\textnormal{B}} \in \, \widehat{\mathcal{K}}}$ in $\PP_{2}(\Rd)$, as in \eqref{eq_gen_case_deco_mu_fit} above. In the following, all statements have to be understood for $\mu$-a.e.\ $x \in \Rd$, or for $\kappa^{\textnormal{B}}$-a.e.\ $I^{\textnormal{B}} \in \widehat{\mathcal{K}}$. 

\smallskip

For fixed $I^{\textnormal{B}} \in \widehat{\mathcal{K}}$, we use $\mu^{I^{\textnormal{B}}}$ and the stretched Brownian motion $\pi^{\sbm} \in \MT(\mu,\nu)$ to define $\nu^{I^{\textnormal{B}}}$ as in \eqref{eq_gen_case_deco_nu_fit} above. Restricting the stretched Brownian motion $\pi^{\sbm}$ to $(\mu^{I^{\textnormal{B}}},\nu^{I^{\textnormal{B}}})$ yields the martingale transport
\begin{equation} \label{eq_def_pisbm_loc}
\pi^{I^{\textnormal{B}}}(dx,dy) 
\coloneqq \pi^{\sbm}_{x}(dy) \, \mu^{I^{\textnormal{B}}}(dx) \in \MT(\mu^{I^{\textnormal{B}}},\nu^{I^{\textnormal{B}}})
\end{equation}
from $\mu^{I^{\textnormal{B}}}$ to $\nu^{I^{\textnormal{B}}}$, so that 
\begin{equation} \label{eq_def_pisbm_loc_i}
\pi^{\sbm}(\, \cdot \,)  
= \int_{\widehat{\mathcal{K}}} \, \pi^{I^{\textnormal{B}}}(\, \cdot \,) \ \kappa^{\textnormal{B}}(dI^{\textnormal{B}}).
\end{equation}
It is not hard to verify (see Lemma \ref{lem_dotpsbmp} below) that $\pi^{I^{\textnormal{B}}}$ actually is the stretched Brownian motion from $\mu^{I^{\textnormal{B}}}$ to $\nu^{I^{\textnormal{B}}}$. The crucial result is that it is, in fact, a Bass martingale from $\mu^{I^{\textnormal{B}}}$ to $\nu^{I^{\textnormal{B}}}$. Let us sketch the argument how to find the dual optimizers corresponding to this family of Bass martingales.

\smallskip

For $x \in \Rd$, we consider representatives $\psi_{n}^{x} \in [\psi_{n}]$ such that the sequence $(\psi_{n}^{x}(x))_{n \geqslant 1}$ is bounded. For example, by Hahn--Banach, we can choose $\psi_{n}^{x} \in [\psi_{n}]$ so that $\psi_{n}^{x}(x) = 0$. By definition \eqref{eq_fdotbb_int}, for $x \in \Rd$, the representatives $(\psi_{n}^{x})_{n \geqslant 1}$ are pointwise bounded on $I^{\textnormal{B}}(x)$. We can even obtain pointwise \textit{convergence} on $I^{\textnormal{B}}(x)$ to a limiting function $\psilim^{I^{\textnormal{B}}(x)} \colon I^{\textnormal{B}}(x) \rightarrow [0,+\infty)$ by choosing (possibly different) representatives $\bar{\psi}_{n}^{x} \in [\psi_{n}]$. A possible way to find these representatives $\bar{\psi}_{n}^{x}$ is to choose a maximal set $\{ \xi_{1}, \ldots, \xi_{m}\}$ of affinely independent points in $I^{\textnormal{B}}(x)$ and prescribe arbitrary real values $\{a_{1}, \ldots, a_{m}\}$ at these points. Then there are affine functions $\textnormal{aff}_{n} \colon \Rd \rightarrow \R$ such that $\bar{\psi}_{n}^{x} \coloneqq \psi_{n}^{x} + \textnormal{aff}_{n}$ attains these values on $\xi_{1}, \ldots, \xi_{m}$, i.e.,
\begin{equation} \label{eq_aff_con_int}
\bar{\psi}_{n}^{x}(\xi_{j}) = a_{j}, \qquad j = 1, \ldots, m.
\end{equation}
The functions $\textnormal{aff}_{n}$ are uniquely determined by \eqref{eq_aff_con_int} on the affine span of $I^{\textnormal{B}}(x)$. By properly choosing $\{a_{1}, \ldots, a_{m}\}$, we can also make sure that $\bar{\psi}_{n}^{x} \geqslant 0$, so that $\bar{\psi}_{n}^{x} \in [\psi_{n}]$. Relying on \cite[Proposition 7.20]{BVBST23}, we come to a crucial point: it turns out that this sequence of representatives $(\bar{\psi}_{n}^{x})_{n \geqslant 1}$ indeed \textit{converges} pointwise to some $\psilim^{I^{\textnormal{B}}(x)}$ as in \eqref{gen_case_theom_gen_case_int_conv_i} and \eqref{gen_case_theom_gen_case_int_conv_ii}.

\smallskip

Finally, we show that the limiting function $\psilim^{I^{\textnormal{B}}} \coloneqq \psilim^{I^{\textnormal{B}}(x)}$ is a dual optimizer for the pair $(\mu^{I^{\textnormal{B}}},\nu^{I^{\textnormal{B}}})$. Theorem \ref{theorem.1.4.o1a} above then implies that there is a Bass martingale $(M_{t}^{I^{\textnormal{B}}})_{0 \leqslant t \leqslant 1}$ from $\mu^{I^{\textnormal{B}}}$ to $\nu^{I^{\textnormal{B}}}$, given by \eqref{eq_gen_theom_gen_case_rep_iii}. The resulting family of Bass martingales then decomposes the stretched Brownian motion $\pi^{\sbm} \in \MT(\mu,\nu)$ via \eqref{eq_def_pisbm_loc}, \eqref{eq_def_pisbm_loc_i}.

\subsection{Literature}

Pavings $\{ I(x) \}_{x \in \Rd}$ associated to martingale transports $\pi \in \MT(\mu,\nu)$ into relatively open convex sets, which are invariant under $\pi$ --- such as the above Bass paving $\{ I^{\textnormal{B}}(x) \}_{x \in \Rd}$ --- have been studied, notably in \cite{OS17, BJ16, DMT19, GKL19, Cio23a, Cio23b}. 

\smallskip 

In \cite{GKL19}, N.\ Ghoussoub, Y.-H.\ Kim and T.\ Lim focused on such decompositions, for a fixed martingale transport $\pi \in \MT(\mu,\nu)$. They obtained descriptions for the minimizers and maximizers for the cost function $c(x,y)=\vert x-y \vert$, when marginals are supported on $\R^2$, as well as for marginals on higher-dimensional state spaces that are in subharmonic order. Given a specific martingale $M$, Ghoussoub--Kim--Lim also defined a finest paving of the source space into cells that are invariant under the martingale $M$. 

\smallskip

While in dimension $d = 1$ the decomposition constructed in \cite{GKL19} does not depend on the choice of $\pi \in \MT(\mu,\nu)$ (see the paper \cite{BJ16} by M.\ Beiglb{\"o}ck and N.\ Juillet), it was noted by H.\ De March and N.\ Touzi in \cite{DMT19} that in dimension $d \geqslant 2$ this decomposition does depend on this choice. In \cite{DMT19} and, independently, in the work \cite{OS17} by J.\ Ob{\l}{\'o}j and P.\ Siorpaes, the \textit{finest} such decomposition which works \textit{for all} $\pi \in \MT(\mu,\nu)$ was analyzed. In \cite{DMT19} it was shown that such a universal decomposition exists uniquely (in an almost sure sense). In addition, a martingale transport $\pi^{\dmt} \in \MT(\mu,\nu)$ has been constructed in \cite{DMT19}, which generates this universal decomposition into irreducible sets. While there is no uniqueness in the construction of such \textit{De March--Touzi martingale transports} $\pi^{\dmt} \in \MT(\mu,\nu)$ in \cite{DMT19}, Theorem \ref{gen_case_theom_gen_case_int} above shows, among other features, that the (unique) stretched Brownian motion $\pi^{\sbm} \in \MT(\mu,\nu)$ is such a De March--Touzi martingale transport. In particular, the irreducible pavings into relatively open convex sets induced by $\pi^{\dmt}$ and $\pi^{\sbm}$ coincide (in an almost sure sense made precise below).

\subsection{Definitions and notation}

\begin{itemize}
\item We write $\PP(\Rd)$ for the probability measures on $\Rd$ and $\PP_{p}(\Rd)$ for the subset of probability measures satisfying $\int \vert x \vert^{p} \, d\mu < +\infty$, for $p \in [1,+\infty)$.
\item For $\mu,\nu \in \PP(\Rd)$, we denote by $\Cpl(\mu,\nu)$ the set of all couplings $\pi \in \PP(\Rd \times \Rd)$ between $\mu$ and $\nu$, i.e.\ probability measures $\pi$ on $\Rd \times \Rd$ with first marginal $\mu$ and second marginal $\nu$.
\item We say that $\mu \in \PP_{1}(\Rd)$ is dominated by $\nu \in \PP_{1}(\Rd)$ in convex order and write $\mu \lc \nu$, if for all convex functions $f \colon \Rd \rightarrow \R$ we have $\int f \, d\mu \leqslant \int f \, d\nu$. 
\item For $\mu,\nu \in \PP_{2}(\Rd)$ with $\mu \lc \nu$ we define the collection of martingale transports $\MT(\mu,\nu)$ as those couplings $\pi \in \Cpl(\mu,\nu)$ with barycenter $\bary(\pi_{x}) \coloneqq \int y \, \pi_{x}(dy) = x$, for $\mu$-a.e.\ $x \in \Rd$. Here, the family of probability measures $\{\pi_{x}\}_{x\in\Rd} \subseteq \PP(\Rd)$ is obtained by disintegrating the coupling $\pi$ with respect to its first marginal $\mu$, i.e., $\pi(dx,dy) = \pi_{x}(dy) \, \mu(dx)$.
\item The $d$-dimensional standard Gaussian distribution is denoted by $\gamma$.
\item We denote by $\Cq$ the set of continuous functions $\psi \colon \Rd \rightarrow \R$ with quadratic growth, meaning that there are constants $a,k,\ell \in \R$ with
\[
\ell + \tfrac{\vert \, \cdot \,  \vert^{2}}{2} \leqslant \psi(\, \cdot \,) \leqslant a + k \vert \cdot  \vert^{2}.
\]
We also introduce the set
\begin{equation} \label{def_eq_cqaff}
\Cqaff \coloneqq \big\{ \psi(\, \cdot \,) + \aff(\, \cdot \,) \colon \, \psi \in \Cq, \, \aff \colon \Rd \rightarrow \R \textnormal{ is affine} \big\}.
\end{equation}
\item For two measures $\rho$ and $q$ on $\Rd$ we write $\rho \ast q$ for their convolution. If $f$ is a function, the convolution of $f$ and $q$ is defined as
\[
(f \ast q)(x) \coloneqq \int f(x-y) \, q(dy), \qquad x \in \Rd,
\]
provided $f(x - \, \cdot \,) \in L^{1}(q)$. In particular, $(f \ast \gamma)(x) = \int f(x+y) \, \gamma(dy)$.
\item For a function $f \colon \Rd \rightarrow (-\infty,+\infty]$, its convex conjugate is given by 
\[
f^{\ast}(y) \coloneqq \sup_{x \in \Rd} \big(\langle x,y \rangle - f(x)\big), \qquad y \in \Rd,
\]
and we write
\[
\dom f \coloneqq \{ x \in \Rd \colon f(x) < + \infty \}
\]
for the domain of $f$. 
\item We write $\operatorname{ri}(A)$ and $\overline{A}$ for the relative interior and the closure of a set $A \subseteq \Rd$, respectively. 
\item The support and the closed convex hull of the support of a measure $\rho$ are denoted by $\supp(\rho)$ and $\widehat{\supp}(\rho)$, respectively.
\item The Polish space $\widehat{\mathcal{K}}$ of closed convex subsets of $\Rd$ is equipped with the Wijsman topology \cite{Bee91} and is isomorphically identified with $\operatorname{ri} \widehat{\mathcal{K}} = \{ \operatorname{ri} C \colon C \in \widehat{\mathcal{K}}\}$.
\end{itemize}

\subsection{Relaxed formulation of the dual function} \label{subsec_rfotdf}

Let us return to the technical caveat we made after the statement of Theorem \ref{theorem.1.4.o1a}. For $\psi \in L^{1}(\nu)$, we recall the definition \eqref{def_dual_func_gen} of the dual function $\mathcal{D}(\, \cdot \,)$, which we rewrite as 
\begin{equation} \label{def_dual_func_gen_rel}
\mathcal{D}(\psi) = \int \Big( \int \psi(y) \, \pi_{x}(dy) 
- (\psi^{\ast} \ast \gamma)^{\ast}(x) \Big) \, \mu(dx),
\end{equation}
with $\pi$ being an arbitrary fixed element of $\MT(\mu,\nu)$. As shown in \cite[Proposition 4.2]{BVBST23}, the dual function written in the ``relaxed'' form \eqref{def_dual_func_gen_rel} is well-defined and takes values in the interval $[0,+\infty]$, for a general convex function $\psi \colon \Rd \rightarrow (-\infty,+\infty]$ which is $\mu$-a.s.\ finite. Furthermore, by \cite[Lemma 3.5]{BVBST23} and \cite[Proposition 4.1]{BVBST23}, the value $D(\mu,\nu)$ of the dual problem \eqref{dp_mbb_mcov_dt_f} is equal to
\begin{equation} \label{dual_prob_gen_sec_form}
D(\mu,\nu) = \inf_{\substack{\psi \in \Cqaff, \\ \textnormal{$\psi$ convex}}}\mathcal{D}(\psi) = \inf_{\substack{\mu(\dom \psi) = 1, \\ \textnormal{$\psi$ convex}}}\mathcal{D}(\psi).
\end{equation}
Thus, whenever convenient, by \eqref{dual_prob_gen_sec_form} we are free to choose optimizing sequences $(\psi_{n})_{n \geqslant 1}$ for the dual problem \eqref{dp_mbb_mcov_dt_f} of class $\Cqaff$, as defined in \eqref{def_eq_cqaff} above.

\begin{definition} \label{def_dual_opt_gen_ca} We say that a lower semicontinuous convex function $\psi \colon \Rd \rightarrow (-\infty,+\infty]$ satisfying $\mu(\operatorname{ri}(\dom\psi))=1$ is an optimizer of the dual problem \eqref{dual_prob_gen_sec_form} if $D(\mu,\nu) = \mathcal{D}(\psi)$, for the dual function $\mathcal{D}(\, \cdot \,)$ as defined in \eqref{def_dual_func_gen_rel}. In short, we say that $\psi$ is a dual optimizer.
\end{definition}

We refer to Example 6.7 in \cite{BVBST23} for a case of a dual optimizer $\psi$ in the sense of Definition \ref{def_dual_opt_gen_ca} which is not integrable with respect to $\nu$. 

\section{Equivalence classes of convex functions \texorpdfstring{on $\Rd$}{}} \label{ecocford_sec_three}

In this section we will analyze some phenomena arising for the \textit{equivalence classes $[\psi]$ of convex functions $\psi$ modulo adding affine functions}, as in Definition \ref{def_gc_ecocfeo}.

\begin{definition} \label{def_psi} Let $\psi \colon \Rd \rightarrow \mathds{R}$ be a convex function. For $x \in \Rd$ and $\varepsilon > 0$, we denote by $[\psi]^{x,\varepsilon}$ the equivalence class of convex functions of the form
\begin{equation} \label{eq_def_psi_ii}
\psi^{x,\varepsilon}( \, \cdot \,) = \psi_{n}( \, \cdot \,) + \textnormal{aff}(\, \cdot \,),
\end{equation}
where $\textnormal{aff} \colon \Rd \rightarrow \R$ runs through all affine functions on $\Rd$ such that 
\begin{equation} \label{eq_def_psi_iii}
\psi^{x,\varepsilon}( \, \cdot \,) \geqslant 0
\qquad \textnormal{ and } \qquad
\psi^{x,\varepsilon}(x) < \varepsilon. 
\end{equation}
\end{definition}


\begin{definition} \label{def_sbm_paving} Let $(\psi_{n})_{n \geqslant 1}$ be a sequence of real-valued convex functions on $\Rd$. For $x \in \Rd$, we define the sets
\begin{alignat}{2}
&I^{\textnormal{b}}(x) 
&&\coloneqq \ \Big\{ y \in \Rd \colon \ \lim_{\varepsilon \downarrow 0} \, \sup_{n \geqslant 1} \, \sup_{\psi_{n}^{x,\varepsilon} \in [\psi_{n}]^{x,\varepsilon}} \psi_{n}^{x,\varepsilon}(y) < + \infty\Big\}, 
\label{def_sbm_paving_i} \\
&I^{\textnormal{B}}(x) 
&&\coloneqq \ \operatorname{ri} I^{\textnormal{b}}(x), 
\label{def_sbm_paving_ii} \\
&C^{\textnormal{B}}(x) 
&&\coloneqq \ \overline{I^{\textnormal{b}}(x)}. 
\label{def_sbm_paving_iii}
\end{alignat}
\end{definition}

Here, the superscripts ``\textnormal{b}'' and ``\textnormal{B}'' stand for ``bounded'' and ``Bass'', respectively. The operator $\operatorname{ri}$ denotes the relative interior of a set. We will prove in Proposition \ref{prop_psinbar_bound} below that definition \eqref{eq_fdotbb_int} of $I^{\textnormal{B}}(x)$ coincides with Definition \ref{def_sbm_paving}, thus justifying this abuse of notation.

\begin{proposition} \label{prop_sbm_paving} Let $(\psi_{n})_{n \geqslant 1}$ be a sequence of real-valued convex functions on $\Rd$. The collection $\{I^{\textnormal{B}}(x)\}_{x \in \Rd}$, as introduced in Definition \ref{def_sbm_paving} above, has the following properties:
\begin{enumerate}[label=(\roman*)] 
\item \label{icpodo} $x \in I^{\textnormal{B}}(x)$,
\item \label{icpodpar} for $x_{1} \in I^{\textnormal{B}}(x)$ we have $I^{\textnormal{B}}(x) = I^{\textnormal{B}}(x_{1})$,
\item \label{icpodpartwo} for $x_{1} \notin I^{\textnormal{B}}(x)$ we have $I^{\textnormal{B}}(x) \cap I^{\textnormal{B}}(x_{1}) = \varnothing$.
\end{enumerate}
\end{proposition}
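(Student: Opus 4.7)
I would prove the three claims in order, using \ref{icpodo} and \ref{icpodpar} to derive \ref{icpodpartwo} as an immediate logical consequence.

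For \ref{icpodo}, I begin by observing that for each $\varepsilon > 0$ the function $h^{\varepsilon}(y) := \sup_{n}\sup_{\psi_{n}^{x,\varepsilon}\in[\psi_{n}]^{x,\varepsilon}} \psi_{n}^{x,\varepsilon}(y)$ is convex in $y$ as a supremum of convex functions. Since the admissible class $[\psi_{n}]^{x,\varepsilon}$ shrinks as $\varepsilon\downarrow 0$, $h^{\varepsilon}$ is pointwise decreasing in $\varepsilon$, so its limit $F_{x}(y) := \lim_{\varepsilon\downarrow 0} h^{\varepsilon}(y) \in [0,+\infty]$ is again convex. Hence $I^{\textnormal{b}}(x) = \{F_{x} < +\infty\}$ is convex, and because every admissible representative satisfies $\psi_{n}^{x,\varepsilon}(x) < \varepsilon$, we have $h^{\varepsilon}(x) \leqslant \varepsilon$, so $F_{x}(x) = 0$ and $x \in I^{\textnormal{b}}(x)$.

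The \emph{main obstacle} is upgrading this to $x \in \operatorname{ri} I^{\textnormal{b}}(x)$. My plan starts from the envelope formula
\[
\sup_{\psi_{n}^{x,\varepsilon}\in[\psi_{n}]^{x,\varepsilon}} \psi_{n}^{x,\varepsilon}(y) \;=\; \psi_{n}(y) - \psi_{n}(x) + \varepsilon + \sigma_{\partial_{\varepsilon}\psi_{n}(x)}(x - y),
\]
with $\sigma_{S}$ the support function of $S$; this follows by parameterising representatives as $\psi_{n} + c + \langle a,\,\cdot\,\rangle$ and optimising $(c,a)$ against nonnegativity and $\bar{\psi}_{n}(x) < \varepsilon$. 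The plan is then a separation argument: if $x \notin \operatorname{ri} I^{\textnormal{b}}(x)$, choose an affine $\ell$ with $\ell(x) = 0$, $\ell \geqslant 0$ on $I^{\textnormal{b}}(x)$, and $\ell(y_{0}) > 0$ for some $y_{0} \in I^{\textnormal{b}}(x)$, and show via the envelope formula that one can construct representatives (by translating $\psi_n$ using subgradients pushed into $\{\ell < 0\}$) whose values at $y_0$ diverge, contradicting $y_{0} \in I^{\textnormal{b}}(x)$. The technical heart is verifying that the $\varepsilon$-subdifferentials $\partial_{\varepsilon}\psi_{n}(x)$ admit enough room in the direction normal to $\ell$ to support such reps uniformly in $n$; equivalently, that $\ell(y_0) > 0$ combined with $F_x(y_0) < \infty$ forces $\partial_\varepsilon \psi_n(x)$ to degenerate in the $\nabla \ell$ direction, leading to $\ell \equiv 0$ on $I^{\textnormal{b}}(x)$ and the desired contradiction.

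For \ref{icpodpar}, suppose $x_{1}\in I^{\textnormal{B}}(x)$. I would first show the set-theoretic equality $I^{\textnormal{b}}(x)=I^{\textnormal{b}}(x_{1})$ and then take relative interiors on both sides. Given $y\in I^{\textnormal{b}}(x)$ and a representative $\bar\psi_{n}\in[\psi_{n}]^{x_{1},\varepsilon}$, I use $x_{1}\in\operatorname{ri} I^{\textnormal{b}}(x)$ to pick a companion $z\in I^{\textnormal{b}}(x)$ on the extended ray from $x_{1}$ through $x$, so that $x$ lies in the strict convex hull of $\{x_{1},z\}$. Jensen's inequality applied to $\bar\psi_{n}$ then bounds $\bar\psi_{n}(x)$ by a convex combination of $\bar\psi_{n}(x_{1}) < \varepsilon$ and $\bar\psi_{n}(z)$, both controlled. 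A bounded affine correction recasts $\bar\psi_{n}$ as a member of $[\psi_{n}]^{x,\varepsilon'}$ with $\varepsilon' = O(\varepsilon)$, transferring the uniform-in-$n$ sup bound at $y$ from the $x$-class and yielding $y\in I^{\textnormal{b}}(x_{1})$. The reverse inclusion follows by the symmetric argument once one verifies $x\in I^{\textnormal{B}}(x_{1})$ by the same convex-combination method with the roles of $x$ and $x_{1}$ swapped. Taking relative interiors then gives $I^{\textnormal{B}}(x) = I^{\textnormal{B}}(x_{1})$.

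Finally, \ref{icpodpartwo} is an immediate logical consequence of \ref{icpodo} and \ref{icpodpar}: if $z\in I^{\textnormal{B}}(x)\cap I^{\textnormal{B}}(x_{1})$, applying \ref{icpodpar} to $(x,z)$ and to $(x_{1},z)$ yields $I^{\textnormal{B}}(x)=I^{\textnormal{B}}(z)=I^{\textnormal{B}}(x_{1})$; combined with $x_{1}\in I^{\textnormal{B}}(x_{1})$ from \ref{icpodo}, this gives $x_{1}\in I^{\textnormal{B}}(x)$, contradicting $x_{1}\notin I^{\textnormal{B}}(x)$.
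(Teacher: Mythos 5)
Your preliminary observations for \ref{icpodo} are sound: $h^{\varepsilon}$ is convex as a supremum of convex functions, $F_{x}$ is a decreasing limit of these and hence convex, so $I^{\textnormal{b}}(x)$ is convex and clearly contains $x$; the envelope formula for $\sup_{\psi_{n}^{x,\varepsilon}\in[\psi_{n}]^{x,\varepsilon}}\psi_{n}^{x,\varepsilon}(y)$ in terms of the support function of $\partial_{\varepsilon}\psi_{n}(x)$ also checks out. But from there your argument is a plan, not a proof. The ``technical heart'' --- that the $\varepsilon$-subdifferentials admit enough room in the $\nabla\ell$-direction, or ``equivalently'' that they degenerate, forcing $\ell\equiv 0$ on $I^{\textnormal{b}}(x)$ --- is precisely the content that needs to be established, and as written the two formulations (room vs.\ degeneracy) even point in opposite directions. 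The paper's proof does not pass through the envelope formula at all: it picks the concrete point $y(\varepsilon)=x+\varepsilon^{2}(x-\bar{x})$ just across the separating hyperplane, uses nonnegativity plus $\psi_{n}^{x,\varepsilon}(x)<\varepsilon$ plus the unboundedness at $y(\varepsilon)$ to extract a supporting affine function of slope $\tfrac1\varepsilon$ along the line through $\bar{x}$ and $x$, extends it by Hahn--Banach, and subtracts it to get a representative in $[\psi_{n}]^{x,2\varepsilon}$ whose value at $\bar{x}$ is $\geqslant\tfrac1\varepsilon$. That transfer step is what your outline is missing, and the envelope formula by itself does not supply it.

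For \ref{icpodpar} the Jensen step is circular: you bound $\bar{\psi}_{n}(x)$, for $\bar{\psi}_{n}\in[\psi_{n}]^{x_{1},\varepsilon}$, by a convex combination of $\bar{\psi}_{n}(x_{1})<\varepsilon$ and $\bar{\psi}_{n}(z)$ with $z\in I^{\textnormal{b}}(x)$ and declare both ``controlled,'' but controlling $\bar{\psi}_{n}(z)$ uniformly over the $x_{1}$-class is exactly the assertion $z\in I^{\textnormal{b}}(x_{1})$, which is part of what you are trying to prove. The paper avoids this by first establishing a uniform bound $\bar{M}$ for the $x$-class on a compact $K_{\delta}\subseteq I^{\textnormal{B}}(x)$, then a uniform Lipschitz estimate $\vert\aff_{n}(y)-\aff_{n}(z)\vert\leqslant\tfrac{L}{\delta}\vert y-z\vert$ on $\Aff(x)$ for the affine differences $\aff_{n}=\psi_{n}^{x,\varepsilon}-\psi_{n}^{x_{1},\varepsilon}$, which yields $I^{\textnormal{b}}(x)\cap\Aff(x)=I^{\textnormal{b}}(x_{1})\cap\Aff(x)$; and a separate argument is needed to show $I^{\textnormal{b}}(x_{1})\subseteq\Aff(x)$. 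Your ``bounded affine correction'' would in fact require precisely this uniform Lipschitz control, which you have not supplied, and you do not address the affine-hull containment at all. Your derivation of \ref{icpodpartwo} from \ref{icpodo} and \ref{icpodpar} is correct and matches the paper.
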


In conclusion, the collection $\{I^{\textnormal{B}}(x)\}_{x \in \Rd}$ is a paving of $\Rd$ into relatively open convex sets.

\begin{proof}[Proof of Proposition \ref{prop_sbm_paving}] \ref{icpodo}: Let $x \in \R^{d}$. We first note that both $I^{\textnormal{b}}(x)$ and $I^{\textnormal{B}}(x)$ are convex sets and by definition the latter set is relatively open. Moreover, it is clear that $x \in I^{\textnormal{b}}(x)$. To show that $x$ is in the relative interior of $I^{\textnormal{b}}(x)$, we suppose that $x \notin I^{\textnormal{B}}(x)$, i.e., that $x$ is a relative boundary point of $I^{\textnormal{b}}(x)$. By \cite[Corollary 11.6.2]{Roc70} there exists a functional $y^{\ast} \in \Rd$ and some $\bar{x} \in I^{\textnormal{b}}(x)$ such that
\begin{equation} \label{icpodoa}
\langle \bar{x}, y^{\ast} \rangle < \max_{y \in I^{\textnormal{b}}(x)} \langle y, y^{\ast} \rangle = \langle x, y^{\ast} \rangle.
\end{equation}
For notational convenience and without loss of generality we assume that $\vert x - \bar{x} \vert = 1$. We will show that
\begin{equation} \label{icpodoa_f}
\lim_{\varepsilon \downarrow 0} \, \sup_{n \geqslant 1} \, \sup_{\psi_{n}^{x,\varepsilon} \in [\psi_{n}]^{x,\varepsilon}}  \psi_{n}^{x,\varepsilon}(\bar{x}) = + \infty,
\end{equation}
so that $\bar{x} \notin I^{\textnormal{b}}(x)$, which will yield the desired contradiction. 

\smallskip

We draw a straight line through $\bar{x}$ and $x$, i.e., we consider the function
\begin{equation} \label{icpodoa_c}
t \longmapsto x + t(x - \bar{x}), \qquad t \in \mathds{R}.
\end{equation}
For $\varepsilon > 0$, let us consider the point $y = y(\varepsilon) = x + \varepsilon^{2}(x-\bar{x})$ on this line. By \eqref{icpodoa} the point $y$ cannot be an element of $I^{\textnormal{b}}(x)$. Thus, for $\varepsilon \in (0,1)$ we can find $n \geqslant 1$ and a function $\psi_{n}^{x,\varepsilon} \in [\psi_{n}]^{x,\varepsilon}$ such that
\begin{equation} \label{icpodoa_b}
\psi_{n}^{x,\varepsilon}(x+\varepsilon^{2}(x - \bar{x})) - \psi_{n}^{x,\varepsilon}(x) \geqslant 1.
\end{equation}
By elementary geometry we observe that the convex function $\psi_{n}^{x,\varepsilon}$ in \eqref{icpodoa_b} has the following property: the affine function $\textnormal{af}(\, \cdot \,)$ on the real line given by
\begin{equation} \label{icpodoa_e}
t \longmapsto \textnormal{af}(t) \coloneqq \tfrac{1}{\varepsilon} t - \varepsilon, \qquad t \in \mathds{R}
\end{equation}
is less than or equal to the restriction of $\psi_{n}^{x,\varepsilon}$ to the line \eqref{icpodoa_c}, i.e., the function
\[
t \longmapsto \psi_{n}^{x,\varepsilon}(x+t(x - \bar{x})), \qquad t \in \mathds{R}.
\]
By Hahn--Banach we can extend the function $x+t(x - \bar{x}) \mapsto \textnormal{af}(t)$ to an affine function $\textnormal{aff}(\, \cdot \,)$ defined on $\R^{d}$ such that $\textnormal{aff}(y) \leqslant \psi_{n}^{x,\varepsilon}(y)$ for all $y \in \Rd$. Note that we have $\textnormal{aff}(x) = \textnormal{af}(0) = - \varepsilon$. Therefore the function
\[
y \longmapsto \psi_{n}^{x,2\varepsilon}(y) \coloneqq \psi_{n}^{x,\varepsilon}(y) - \textnormal{aff}(y), 
\qquad y \in \Rd
\]
is a representative of $[\psi_{n}]^{x,2\varepsilon}$. At the same time we have $\textnormal{aff}(\bar{x}) = \textnormal{af}(-1) = - \tfrac{1}{\varepsilon}  - \varepsilon$ and thus
\[
\psi_{n}^{x,2\varepsilon}(\bar{x}) = \psi_{n}^{x,\varepsilon}(\bar{x}) - \textnormal{aff}(\bar{x}) \geqslant \tfrac{1}{\varepsilon}  + \varepsilon \geqslant \tfrac{1}{\varepsilon}.
\]
This contradicts the assumption $\bar{x} \in I^{\textnormal{b}}(x)$ and finishes the proof of \ref{icpodo}.
 
\medskip

\noindent \ref{icpodpar}: Let $x \in \Rd$. We first show the following claim: \textit{For every compact subset $K \subseteq I^{\textnormal{B}}(x)$ there is $\bar{M} > 0$ and $\bar{\varepsilon} \in (0,1]$ such that for each $n \in \mathds{N}$ and $\psi_{n}^{x,\bar{\varepsilon}} \in [\psi_{n}]^{x,\bar{\varepsilon}}$ we have
\begin{equation} \label{icpodpar_cl_one}
K \subseteq \{ \psi_{n}^{x,\bar{\varepsilon}} \leqslant \bar{M} \}.
\end{equation}}
For the proof of the claim, we define the set
\[
B_{M, \varepsilon} \coloneqq \bigcap_{n \in \mathds{N}} \bigcap_{\psi_{n}^{x,\varepsilon} \in [\psi_{n}]^{x,\varepsilon}} \{ \psi_{n}^{x,\varepsilon} \leqslant M \}.
\]
By Definition \ref{def_sbm_paving}, we have
\[
\bigcup_{M > 0} \bigcup_{\varepsilon > 0} B_{M, \varepsilon} 
= I^{\textnormal{b}}(x) \supseteq I^{\textnormal{B}}(x).
\]
We denote by $\Aff(x)$ the affine span of $I^{\textnormal{B}}(x)$ and by $B_{M, \varepsilon}^{\circ}$ the relative interior of $B_{M, \varepsilon}$ in $\Aff(x)$. Then for every compact subset $K \subseteq I^{\textnormal{B}}(x)$ we have
\[
\bigcup_{M > 0} \bigcup_{\varepsilon > 0} B_{M, \varepsilon}^{\circ} 
= I^{\textnormal{B}}(x) \supseteq K.
\]
By compactness there is some $\bar{M} > 0$ and $\bar{\varepsilon} > 0$ such that
\[
B_{\bar{M},\bar{\varepsilon}}^{\circ} \supseteq K,
\]
which readily implies \eqref{icpodpar_cl_one}.

\smallskip

Let $x_{1} \in I^{\textnormal{B}}(x)$ and choose $\delta > 0$ such that 
\[
K_{\delta} \coloneqq \overline{B}_{\delta}(x_{1}) \cap \Aff(x) \subseteq I^{\textnormal{B}}(x).
\]
Take $\bar{M} > 0$ and $\bar{\varepsilon} \in (0,1]$ such that \eqref{icpodpar_cl_one} is satisfied for $K = K_{\delta}$. Now we make the following claim: \textit{There exists a constant $L > 0$ such that, for every $\varepsilon \in (0,\bar{\varepsilon}]$ and every choice of representatives $\psi_{n}^{x,\varepsilon} \in [\psi_{n}]^{x,\varepsilon}$ and $\psi_{n}^{x_{1},\varepsilon} \in [\psi_{n}]^{x_{1},\varepsilon}$, the affine functions
\[
\aff_{n} \coloneqq \psi_{n}^{x,\varepsilon} - \psi_{n}^{x_{1},\varepsilon}
\]
satisfy the Lipschitz estimate}
\begin{equation} \label{icpodpar_lip}
\forall y,z \in \Aff(x) \colon \ 
\vert \aff_{n}(y) - \aff_{n}(z) \vert \leqslant \tfrac{L}{\delta} \vert y - z \vert.
\end{equation}
Negating the claim, we suppose that, for every $L > 0$, there is a point $x_{2} \in B_{\delta}(x_{1}) \cap \Aff(x)$ and $n \in \mathds{N}$ such that
\[
\aff_{n}(x_{2}) - \aff_{n}(x_{1}) \geqslant L.
\]
We then obtain
\[
\psi_{n}^{x,\varepsilon}(x_{2}) - \psi_{n}^{x,\varepsilon}(x_{1})
\geqslant \psi_{n}^{x_{1},\varepsilon}(x_{2}) - \psi_{n}^{x_{1},\varepsilon}(x_{1}) + L.
\]
Using the estimates $\psi_{n}^{x,\varepsilon}(x_{1}) \geqslant 0$, $\psi_{n}^{x_{1},\varepsilon}(x_{2}) \geqslant 0$, and $\psi_{n}^{x_{1},\varepsilon}(x_{1}) \leqslant 1$, we conclude that
\[
\psi_{n}^{x,\varepsilon}(x_{2}) \geqslant L-1,
\]
which is a contradiction to $\psi_{n}^{x,\varepsilon}(x_{2}) \leqslant \bar{M}$, thus finishing the proof of \eqref{icpodpar_lip}. 

\smallskip

Finally, it follows from \eqref{icpodpar_lip} that
\begin{equation} \label{icpodpar_pre}
\forall x_{1} \in I^{\textnormal{B}}(x) \colon \
I^{\textnormal{b}}(x) \cap \Aff(x) = I^{\textnormal{b}}(x_{1}) \cap \Aff(x).
\end{equation}
To obtain the full assertion \ref{icpodpar} from \eqref{icpodpar_pre}, we still have to show that $I^{\textnormal{b}}(x_{1})$, and therefore also $I^{\textnormal{B}}(x_{1})$, is contained in the affine space $\Aff(x)$. If this were not the case, the affine space $\Aff(x_{1})$ spanned by $I^{\textnormal{B}}(x_{1})$ would strictly contain $\Aff(x)$. Reversing the roles of $x$ and $x_{1}$, and applying \eqref{icpodpar_lip} to $\Aff(x_{1})$ rather than $\Aff(x)$, we arrive at the desired contradiction, which completes the proof of \ref{icpodpar}. 

\medskip

\noindent \ref{icpodpartwo}: The final statement \ref{icpodpartwo} is a consequence of \ref{icpodo} and \ref{icpodpar}.
\end{proof}

We next show a variant of Proposition \ref{prop_sbm_paving}, where we do not have to refer to $\varepsilon > 0$ as in Definitions \ref{def_psi} and \ref{def_sbm_paving} above.

\begin{proposition} \label{prop_psinbar_bound} Let $(\psi_{n})_{n \geqslant 1}$ be a sequence of $(-\infty,+\infty]$-valued convex functions on $\Rd$. Fix representatives $\bar{\psi}_{n} \in [\psi_{n}]$, $x \in \Rd$, and $I^{\textnormal{B}}(x)$ as defined in Definition \ref{def_sbm_paving}. The sequence $(\bar{\psi}_{n}(y))_{n \geqslant 1}$ is bounded for some $y \in I^{\textnormal{B}}(x)$ if and only if it is bounded for all $y \in I^{\textnormal{B}}(x)$. In this case, $(\bar{\psi}_{n})_{n \geqslant 1}$ is uniformly bounded on compact subsets of $I^{\textnormal{B}}(x)$.

\smallskip

In particular, the definition \eqref{eq_fdotbb_int} of $I^{\textnormal{B}}(x)$ coincides with Definition \ref{def_sbm_paving}.
\begin{proof} Let $(\varepsilon_{n})_{n \geqslant 1}$ be a null sequence and take $\psi_{n}^{x,\varepsilon_{n}} \in [\psi_{n}]^{x,\varepsilon_{n}}$. By definition \eqref{def_sbm_paving_i}, \eqref{def_sbm_paving_ii} of $I^{\textnormal{B}}(x)$, we have that $(\psi_{n}^{x,\varepsilon_{n}}(y))_{n \geqslant 1}$ is bounded, for all $y \in I^{\textnormal{B}}(x)$. For arbitrary representatives $\bar{\psi}_{n} \in [\psi_{n}]$ we have that $\textnormal{aff}_{n} \coloneqq \bar{\psi}_{n} - \psi_{n}^{x,\varepsilon_{n}}$ is an affine function on $\Rd$. As $\bar{\psi}_{n} \geqslant 0$, we have that $(\textnormal{aff}_{n}(y))_{n \geqslant 1}$ is bounded from below, for every $y \in I^{\textnormal{B}}(x)$. If we also have that $(\bar{\psi}_{n}(y_{0}))_{n \geqslant 1}$ is bounded, for some $y_{0} \in I^{\textnormal{B}}(x)$, we obtain that $(\textnormal{aff}_{n}(y_{0}))_{n \geqslant 1}$ is bounded. This implies that $(\textnormal{aff}_{n}(y))_{n \geqslant 1}$ is bounded, for all $y$ in the affine span of $I^{\textnormal{B}}(x)$. In particular, $(\bar{\psi}_{n}(y))_{n \geqslant 1}$ is bounded, for all $y \in I^{\textnormal{B}}(x)$. By convexity, $(\bar{\psi}_{n})_{n \geqslant 1}$ is uniformly bounded on compact subsets of $I^{\textnormal{B}}(x)$.

\smallskip

As regards the final assertion, the set $I^{\textnormal{B}}(x)$ defined in \eqref{eq_fdotbb_int} obviously is contained in the set $I^{\textnormal{B}}(x)$ defined in \eqref{def_sbm_paving_i}, \eqref{def_sbm_paving_ii}. For the reverse inclusion, suppose there is $y \in \Rd$ and a sequence $(\psi_{n}^{x,\varepsilon_{n}})_{n \geqslant 1}$ such that $(\psi_{n}^{x,\varepsilon_{n}}(y))_{n \geqslant 1}$ is unbounded, while, for every choice of representatives $\bar{\psi}_{n} \in [\psi_{n}]$ with $\bar{\psi}_{n}(x) \leqslant 1$, the sequence $(\bar{\psi}_{n}(y))_{n \geqslant 1}$ is bounded. This is a contradiction to the first part of Proposition \ref{prop_psinbar_bound}, finishing the proof of the final assertion.
\end{proof}
\end{proposition}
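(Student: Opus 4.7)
The plan is to compare an arbitrary representative $\bar{\psi}_{n} \in [\psi_{n}]$ with a ``canonical'' representative $\psi_{n}^{x,\varepsilon_{n}} \in [\psi_{n}]^{x,\varepsilon_{n}}$ for some null sequence $(\varepsilon_{n})_{n \geqslant 1}$, and to exploit the fact that these two representatives differ only by an affine function on $\Rd$. By Definition \ref{def_sbm_paving}, for any such choice of $\psi_{n}^{x,\varepsilon_{n}}$, the sequence $(\psi_{n}^{x,\varepsilon_{n}}(y))_{n \geqslant 1}$ is bounded for every $y \in I^{\textnormal{B}}(x)$; this is the concrete reference family against which to control $\bar{\psi}_{n}$.

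To prove the first part, I would set $\textnormal{aff}_{n} \coloneqq \bar{\psi}_{n} - \psi_{n}^{x,\varepsilon_{n}}$, an affine function on $\Rd$. Since $\bar{\psi}_{n} \geqslant 0$ and $(\psi_{n}^{x,\varepsilon_{n}}(y))_{n \geqslant 1}$ is bounded for each $y \in I^{\textnormal{B}}(x)$, the sequence $(\textnormal{aff}_{n}(y))_{n \geqslant 1}$ is bounded from below on $I^{\textnormal{B}}(x)$. If additionally $(\bar{\psi}_{n}(y_{0}))_{n \geqslant 1}$ is bounded for some $y_{0} \in I^{\textnormal{B}}(x)$, then $(\textnormal{aff}_{n}(y_{0}))_{n \geqslant 1}$ is bounded from both sides. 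Because $y_{0}$ lies in the relatively open convex set $I^{\textnormal{B}}(x)$, there is a symmetric neighborhood $y_{0} \pm z$ inside $I^{\textnormal{B}}(x)$ for small $z$ in its tangent affine space; evaluating $\textnormal{aff}_{n}$ at $y_{0} + z$ and $y_{0} - z$ converts the one-sided lower bound into a two-sided bound on the linear part of $\textnormal{aff}_{n}$ along each tangent direction, and a scaling argument in the finite-dimensional tangent space promotes this to uniform boundedness of $(\textnormal{aff}_{n})_{n \geqslant 1}$ on compact subsets of the affine span of $I^{\textnormal{B}}(x)$. Adding back $\psi_{n}^{x,\varepsilon_{n}}$, which by convexity and pointwise boundedness is uniformly bounded on compact subsets of $I^{\textnormal{B}}(x)$, yields the desired conclusion for $\bar{\psi}_{n}$.

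For the coincidence of \eqref{eq_fdotbb_int} with Definition \ref{def_sbm_paving}, one direction is straightforward: for every $\varepsilon \leqslant 1$, any $\psi_{n}^{x,\varepsilon} \in [\psi_{n}]^{x,\varepsilon}$ automatically satisfies $\psi_{n}^{x,\varepsilon}(x) < \varepsilon \leqslant 1$, so the sup in \eqref{eq_fdotbb_int} dominates the sup appearing in Definition \ref{def_sbm_paving} for such $\varepsilon$; this gives the inclusion of \eqref{eq_fdotbb_int}'s inner set in $I^{\textnormal{b}}(x)$. For the reverse inclusion (which suffices after taking relative interiors), I would invoke the first part just established: given $y \in I^{\textnormal{B}}(x)$ according to Definition \ref{def_sbm_paving}, and an arbitrary sequence of representatives $\bar{\psi}_{n} \in [\psi_{n}]$ with $\bar{\psi}_{n}(x) \leqslant 1$, the sequence $(\bar{\psi}_{n}(x))_{n \geqslant 1}$ is bounded and $x \in I^{\textnormal{B}}(x)$ by Proposition \ref{prop_sbm_paving}\,\ref{icpodo}, hence $(\bar{\psi}_{n}(y))_{n \geqslant 1}$ is likewise bounded. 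A short extraction argument (if the uniform sup were infinite, one could select representatives realizing arbitrarily large values) shows that this pointwise boundedness across all admissible sequences is equivalent to finiteness of the double supremum in \eqref{eq_fdotbb_int}, placing $y$ in its inner set.

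The main obstacle I anticipate is the affine-geometric step in the first part: upgrading from a one-sided lower bound of $\textnormal{aff}_{n}$ on $I^{\textnormal{B}}(x)$ together with a two-sided bound at the single point $y_{0}$ to uniform two-sided bounds on compact subsets of the whole affine span. This relies critically on $y_{0}$ being a \emph{relative interior} point of $I^{\textnormal{B}}(x)$, so that both $y_{0} + z$ and $y_{0} - z$ remain in $I^{\textnormal{B}}(x)$ for $z$ in a neighborhood of the origin in the tangent space; without this symmetry, a lower bound on an affine function cannot be promoted to an upper bound, and the relative openness is precisely what makes the comparison work.
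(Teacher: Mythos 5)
Your proof follows the same structure as the paper's: compare $\bar{\psi}_n$ with a reference representative $\psi_n^{x,\varepsilon_n}\in[\psi_n]^{x,\varepsilon_n}$, observe that the affine difference $\aff_n$ is bounded below on $I^{\textnormal{B}}(x)$ and bounded at $y_0$, and conclude it is bounded on the whole affine span; the paper states this implication tersely, while you supply the $y_0\pm z$ symmetry and scaling argument that justifies it, but the route is the same. The argument for the coincidence of the two definitions likewise matches: the ``easy'' inclusion uses that any $\psi_n^{x,\varepsilon}$ with $\varepsilon\leqslant 1$ is admissible in \eqref{eq_fdotbb_int}, and the reverse direction is obtained by invoking the first part at $y_0=x$ together with the extraction/diagonal step, exactly as in the paper.
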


So far, the sequence $(\psi_{n})_{n \geqslant 1}$ of real-valued convex functions on $\Rd$ had nothing to do with the pair $(\mu,\nu)$. We now impose the additional assumption that $(\psi_{n})_{n \geqslant 1} \subseteq \Cqaff$ is an optimizing sequence of convex functions for the dual problem \eqref{dp_mbb_mcov_dt_f}. In this way, we can relate the support of $\nu$ to $I^{\textnormal{B}}(x)$.

\begin{proposition} \label{prop_psinbar_bound_sharp} Let $\mu, \nu \in \PP_{2}(\Rd)$ with $\mu \lc \nu$. Let $(\psi_{n})_{n \geqslant 1} \subseteq \Cqaff$ be an optimizing sequence of convex functions for the dual problem \eqref{dp_mbb_mcov_dt_f}, with induced Bass paving $\{I^{\textnormal{B}}(x)\}_{x \in \Rd}$. Suppose that there is $x \in \Rd$ such that, for $I^{\textnormal{B}} \coloneqq I^{\textnormal{B}}(x)$, we have $\mu(I^{\textnormal{B}}) = 1$. Then for $C^{\textnormal{B}} = \overline{I^{\textnormal{B}}}$ we have
\begin{equation} \label{lem_cruc_incl_i_sharp}
\widehat{\supp}(\nu) \subseteq C^{\textnormal{B}}.
\end{equation}
\begin{proof} Applying \cite[Lemma 7.9]{BVBST23} gives
\begin{equation} \label{lem_cruc_incl_ii_a_sharp}
a \coloneqq \sup_{n \geqslant 1} \int_{x \in I^{\textnormal{B}}} \int_{y \in \Rd} \big( \psi_{n}(y) - \psi_{n}(x) \big) \, \pi^{\sbm}_{x}(dy) \, \mu(dx) < +\infty.
\end{equation}

\smallskip

Arguing by contradiction, we assume that \eqref{lem_cruc_incl_i_sharp} is not true. Then we can find an open half space $H$ with $H \cap C^{\textnormal{B}} = \varnothing$ such that $\nu(H) > 0$. By Proposition \ref{prop_psinbar_bound}, we can choose representatives $\bar{\psi}_{n} \in [\psi_{n}]$ such that the sequence $(\bar{\psi}_{n})_{n \geqslant 1}$ is pointwise bounded on $I^{\textnormal{B}}$. We denote by $\tilde{\psi}_{n} \in L^{1}(\nu)$ the function which coincides with $\bar{\psi}_{n}$ on $C^{\textnormal{B}}$, and assumes the value $+\infty$ outside of $C^{\textnormal{B}}$. As observed after Definition \ref{def_gc_ecocfeo}, the sequence $(\tilde{\psi}_{n})_{n \geqslant 1}$ still is an optimizing sequence of lower semicontinuous convex functions in $L^{1}(\nu)$. It also (trivially) satisfies
\begin{equation} \label{prop_psinbar_bound_i_sharp}
\lim_{n \rightarrow \infty} \tilde{\psi}_{n}(y) = + \infty,
\qquad y \in H.
\end{equation}
Since $\nu(H) > 0$, we can find a compact subset $K \subseteq I^{\textnormal{B}}$ such that
\begin{equation} \label{prop_psinbar_conv_pos_iii_sharp}
\mu\big(\big\{ x \in K \colon \pi^{\sbm}_{x}(H) > 0 \big\}\big) > 0.
\end{equation}
As a consequence of \eqref{prop_psinbar_conv_pos_iii_sharp}, the set $K \times H$ has positive $\pi^{\sbm}$-measure.

\smallskip

Now observe that \eqref{lem_cruc_incl_ii_a_sharp} also holds when we replace $\psi_{n}$ by $\tilde{\psi}_{n}$. Furthermore, as
\[
\int \big( \tilde{\psi}_{n}(y) - \tilde{\psi}_{n}(x) \big) \, \pi^{\sbm}_{x}(dy) \geqslant 0,
\]
for $\mu$-a.e.\ $x \in \Rd$, we conclude that
\[
a \geqslant \sup_{n \geqslant 1} \int_{x \in K} \int_{y \in \Rd} \big( \tilde{\psi}_{n}(y) - \tilde{\psi}_{n}(x) \big) \, \pi^{\sbm}_{x}(dy) \, \mu(dx).
\]
As $(\tilde{\psi}_{n})_{n \geqslant 1}$ is uniformly bounded on compact subsets of $I^{\textnormal{B}}$, we have
\[
m \coloneqq \sup_{n \geqslant 1}  \sup_{x \in K}  \tilde{\psi}_{n}(x) < + \infty.
\]
Consequently,
\[
a \geqslant \sup_{n \geqslant 1} \int_{x \in K} \int_{y \in \Rd}  \tilde{\psi}_{n}(y) \, \pi^{\sbm}_{x}(dy) \, \mu(dx)  - m \mu(K).
\]
Using the fact that $\tilde{\psi}_{n} \geqslant 0$, we have
\begin{equation} \label{prop_psinbar_conv_pos_vi_sharp}
a \geqslant \sup_{n \geqslant 1} \int_{y \in \Rd} \int_{x \in K}  \tilde{\psi}_{n}(y) \, \pi^{\sbm}_{x}\big\vert_{H}(dy) \, \mu(dx)  - m \mu(K).
\end{equation}
Next we define a probability measure $\tilde{\nu}$ on $\Rd$ by
\[
\tilde{\nu}(\, \cdot \,) \coloneqq \tfrac{1}{c} \int_{K} \pi^{\sbm}_{x}\big\vert_{H}(\, \cdot \,) \, \mu(dx), 
\]
with normalizing constant
\[
c \coloneqq \int_{K} \pi^{\sbm}_{x}(H) \, \mu(dx). 
\]
Note that $c > 0$ since $\pi^{\sbm}(K \times H) > 0$. In terms of $\tilde{\nu}$, we can express the right-hand side of \eqref{prop_psinbar_conv_pos_vi_sharp} as 
\[
c \sup_{n \geqslant 1} \int_{\Rd} \tilde{\psi}_{n}(y) \, \tilde{\nu}(dy) - m \mu(K).
\]
Applying Jensen's inequality yields
\begin{equation} \label{prop_psinbar_conv_pos_iv_contr_sharp}
a \geqslant 
c \sup_{n \geqslant 1} \tilde{\psi}_{n}(\tilde{y}) - m \mu(K),
\end{equation}
where
\[
\tilde{y} \coloneqq \bary(\tilde{\nu}) = \int_{\Rd} y \, \tilde{\nu}(dy)
\]
denotes the barycenter of the probability measure $\tilde{\nu}$. Observing that $\tilde{y} \in H$ and using \eqref{prop_psinbar_bound_i_sharp}, the right-hand side of \eqref{prop_psinbar_conv_pos_iv_contr_sharp} is equal to $+\infty$. This is a contradiction to the finiteness of $a$ and shows the inclusion \eqref{lem_cruc_incl_i_sharp}.
\end{proof}
\end{proposition}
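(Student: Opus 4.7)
The plan is to argue by contradiction: assuming $\widehat{\supp}(\nu) \not\subseteq C^{\textnormal{B}}$, I would separate $C^{\textnormal{B}}$ from a portion of the mass of $\nu$ by an open half-space $H$, i.e.\ choose $H$ with $H \cap C^{\textnormal{B}} = \varnothing$ and $\nu(H) > 0$. The goal is then to produce, along the optimizing sequence, a specific ``localized'' evaluation of $\psi_n$ that blows up on one side while remaining controlled on the other, contradicting an integrability bound that follows from optimality.

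The first ingredient I would set up is the uniform bound
\[
a \coloneqq \sup_{n \geqslant 1} \int_{x \in I^{\textnormal{B}}} \!\! \int_{y \in \Rd} \big( \psi_{n}(y) - \psi_{n}(x) \big) \, \pi^{\sbm}_{x}(dy) \, \mu(dx) < + \infty,
\]
which I would invoke from \cite[Lemma 7.9]{BVBST23} (applicable since $(\psi_n) \subseteq \Cqaff$ is an optimizing sequence and $\mu(I^{\textnormal{B}}) = 1$). The second ingredient is a good choice of representatives: using Proposition \ref{prop_psinbar_bound}, pick $\bar{\psi}_n \in [\psi_n]$ that are uniformly bounded on compact subsets of $I^{\textnormal{B}}$. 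Then, following the remark after Definition \ref{def_gc_ecocfeo}, replace $\bar{\psi}_n$ by $\tilde{\psi}_n$, which equals $\bar{\psi}_n$ on $C^{\textnormal{B}}$ and $+\infty$ on $\Rd \setminus C^{\textnormal{B}}$; these are still optimizing, and crucially $\tilde{\psi}_n \equiv + \infty$ on $H$.

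The heart of the argument is then a Jensen/barycenter computation. Because $\nu(H) > 0$ and $\nu$ is the second marginal of $\pi^{\sbm}$ under $\mu$, I can find a compact set $K \subseteq I^{\textnormal{B}}$ with
\[
\mu\big(\{x \in K \colon \pi^{\sbm}_x(H) > 0\}\big) > 0,
\]
so that $c \coloneqq \int_K \pi^{\sbm}_x(H) \, \mu(dx) > 0$. Form the conditional probability
\[
\tilde{\nu}(\, \cdot \,) \coloneqq \tfrac{1}{c} \int_K \pi^{\sbm}_x\big\vert_H(\, \cdot \,) \, \mu(dx),
\]
which is supported in the convex set $H$, so its barycenter $\tilde{y}$ lies in $H$ as well. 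Substituting into the bound for $a$ (applied with $\tilde{\psi}_n$ in place of $\psi_n$) and using that $\tilde{\psi}_n \geqslant 0$ is bounded above by some $m < \infty$ on the compact set $K$, I would obtain
\[
a \geqslant c \sup_{n \geqslant 1} \int \tilde{\psi}_n(y) \, \tilde{\nu}(dy) - m \, \mu(K) \geqslant c \sup_{n \geqslant 1} \tilde{\psi}_n(\tilde{y}) - m \, \mu(K)
\]
by Jensen's inequality applied to the convex function $\tilde{\psi}_n$. Since $\tilde{y} \in H$ and $\tilde{\psi}_n(\tilde{y}) = +\infty$ for every $n$, the right-hand side is $+\infty$, contradicting $a < +\infty$.

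The main obstacle, I expect, is justifying the passage from the pointwise boundedness of $\bar{\psi}_n$ on $I^{\textnormal{B}}$ to a true $L^1(\nu)$-type control after extending by $+\infty$ outside $C^{\textnormal{B}}$, and cleanly applying the Jensen step while ensuring the barycenter $\tilde{y}$ really falls inside $H$ (which requires convexity of $H$ and the fact that $\tilde{\nu}$ is a probability measure concentrated on $H$). Once these are in place, the Jensen contradiction is automatic.
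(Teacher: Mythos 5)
Your proposal reproduces the paper's argument step for step: the bound $a < +\infty$ from \cite[Lemma 7.9]{BVBST23}, the contradiction via an open half-space $H$ disjoint from $C^{\textnormal{B}}$ with $\nu(H) > 0$, the choice of representatives $\bar{\psi}_n$ via Proposition \ref{prop_psinbar_bound}, the extension $\tilde{\psi}_n$ by $+\infty$ off $C^{\textnormal{B}}$, the compact set $K$, the conditional measure $\tilde{\nu}$ supported in $H$, and the Jensen/barycenter step yielding $a = +\infty$. This is essentially identical to the paper's own proof.
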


Combining Proposition \ref{prop_psinbar_bound_sharp} with \cite[Proposition 7.20]{BVBST23}, we obtain the following result.

\begin{proposition} \label{prop_psinbar_bound_sharper} Let $\mu, \nu \in \PP_{2}(\Rd)$ with $\mu \lc \nu$. Let $(\psi_{n})_{n \geqslant 1} \subseteq \Cqaff$ be an optimizing sequence of convex functions for the dual problem \eqref{dp_mbb_mcov_dt_f}. Suppose that there is $x \in \Rd$ such that for $I^{\textnormal{B}} \coloneqq I^{\textnormal{B}}(x)$ we have $\mu(I^{\textnormal{B}}) = 1$. Then there exist representatives $\bar{\psi}_{n} \in [\psi_{n}]$ and a lower semicontinuous convex function $\psilim \colon \Rd \rightarrow [0,+\infty]$ such that
\begin{alignat}{2}
\lim_{n \rightarrow \infty} \bar{\psi}_{n}(y) &= \psilim(y) < + \infty, \qquad &&y \in I^{\textnormal{B}}, \label{prop_psinbar_bound_sharper_ii} \\
\lim_{n \rightarrow \infty} \bar{\psi}_{n}(y) &=\psilim(y) =  + \infty, \qquad &&y \in \Rd \setminus C^{\textnormal{B}}. \label{prop_psinbar_bound_sharper_iii}
\end{alignat}
Furthermore, 
\begin{equation} \label{prop_psinbar_bound_sharper_i}
\widehat{\supp}(\nu) = C^{\textnormal{B}},
\end{equation}
and the function $\psilim$ is a dual optimizer for the pair $(\mu,\nu)$.
\begin{proof} Note that $\mu(I^{\textnormal{B}}) = 1$ by assumption and $\widehat{\supp}(\nu) \subseteq C^{\textnormal{B}}$ by Proposition \ref{prop_psinbar_bound_sharp}. Furthermore, by Proposition \ref{prop_psinbar_bound}, we can choose representatives $\tilde{\psi}_{n} \in [\psi_{n}]$ such that the sequence $(\tilde{\psi}_{n})_{n \geqslant 1}$ is pointwise bounded on $I^{\textnormal{B}}$, i.e.
\[
\forall y \in I^{\textnormal{B}} \colon \ \sup_{n \geqslant 1} \tilde{\psi}_{n}(y) < + \infty.
\]
By \cite[Proposition 7.20]{BVBST23}, we conclude that there are representatives $\bar{\psi}_{n} \in [\tilde{\psi}_{n}] = [\psi_{n}]$ such that \eqref{prop_psinbar_bound_sharper_ii} and \eqref{prop_psinbar_bound_sharper_iii} are satisfied; moreover, we have the identity \eqref{prop_psinbar_bound_sharper_i}, and $\psilim$ is a dual optimizer.
\end{proof}
\end{proposition}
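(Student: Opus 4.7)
The plan is to reduce the statement to the irreducible case, for which the pointwise convergence of representatives and dual optimality have already been handled in \cite[Proposition 7.20]{BVBST23}. The key observation is that the hypothesis $\mu(I^{\textnormal{B}}) = 1$, together with the preceding Proposition \ref{prop_psinbar_bound_sharp}, is exactly what is needed to put us into the irreducible-type setting on the closed convex set $C^{\textnormal{B}}$: the first marginal concentrates on a single element $I^{\textnormal{B}}$ of the Bass paving, and the second marginal is supported inside its closure.

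Concretely, the first step is to invoke Proposition \ref{prop_psinbar_bound_sharp}, which yields the inclusion $\widehat{\supp}(\nu) \subseteq C^{\textnormal{B}}$. The second step is to exploit Proposition \ref{prop_psinbar_bound}: by choosing representatives $\tilde{\psi}_{n} \in [\psi_{n}]$ with $\tilde{\psi}_{n}(x) \leqslant 1$ (possible by Hahn--Banach), the definition of $I^{\textnormal{B}}$ together with Proposition \ref{prop_psinbar_bound} forces the sequence $(\tilde{\psi}_{n})_{n \geqslant 1}$ to be pointwise bounded on $I^{\textnormal{B}}$, hence uniformly bounded on compact subsets of $I^{\textnormal{B}}$. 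The third and decisive step is to feed this pointwise bounded optimizing sequence into \cite[Proposition 7.20]{BVBST23}; that result produces representatives $\bar{\psi}_{n} \in [\tilde{\psi}_{n}] = [\psi_{n}]$ together with the lower semicontinuous convex limit $\psilim$, establishes the pointwise convergence statements \eqref{prop_psinbar_bound_sharper_ii} and \eqref{prop_psinbar_bound_sharper_iii}, sharpens the support inclusion to the equality \eqref{prop_psinbar_bound_sharper_i}, and certifies that $\psilim$ is a dual optimizer for $(\mu,\nu)$.

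The main obstacle is purely one of bookkeeping: verifying that the hypotheses of \cite[Proposition 7.20]{BVBST23} are met. That proposition is stated for pairs $(\mu,\nu)$ whose data already matches the irreducible scenario of Theorem \ref{theorem.1.4.o1a} --- namely $\mu$ concentrated on the relative interior of a closed convex set which coincides with $\widehat{\supp}(\nu)$, plus a pointwise bounded optimizing sequence. Propositions \ref{prop_psinbar_bound} and \ref{prop_psinbar_bound_sharp} between them supply all three ingredients (pointwise boundedness on $I^{\textnormal{B}}$, $\mu(I^{\textnormal{B}}) = 1$, and $\widehat{\supp}(\nu) \subseteq C^{\textnormal{B}}$), so the invocation is essentially automatic. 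The only minor point to check is that once \cite[Proposition 7.20]{BVBST23} has been applied, the resulting $\psilim$ --- which is $+\infty$ on $\R^{d} \setminus C^{\textnormal{B}}$ --- remains a dual optimizer in the original (non-restricted) sense of Definition \ref{def_dual_opt_gen_ca}; this follows from the observation made after Definition \ref{def_gc_ecocfeo} that the dual value is unchanged by setting a function to $+\infty$ outside $\widehat{\supp}(\nu)$.
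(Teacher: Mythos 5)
Your proposal matches the paper's proof step for step: the inclusion $\widehat{\supp}(\nu) \subseteq C^{\textnormal{B}}$ from Proposition \ref{prop_psinbar_bound_sharp}, the pointwise-bounded representatives from Proposition \ref{prop_psinbar_bound}, and then the direct invocation of \cite[Proposition 7.20]{BVBST23} to supply the convergent representatives, the support equality, and dual optimality. The extra observation at the end about the value being unchanged when setting $\psilim = +\infty$ off $\widehat{\supp}(\nu)$ is a reasonable sanity check but not a new idea --- this is essentially the same argument the paper gives.
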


In order to apply the disintegration theorem, we will need the following measurability property of the Bass paving $\{I^{\textnormal{B}}(x)\}_{x \in \Rd}$.

\begin{proposition} \label{sbm_map_prop} Let $(\psi_{n})_{n \geqslant 1} \subseteq \Cqaff$ be a sequence of convex functions. The map $C^{\textnormal{B}} \colon \Rd \rightarrow \widehat{\mathcal{K}}$ given by 
\begin{equation} \label{sbm_map_prop_one}
x \longmapsto C^{\textnormal{B}}(x) = \overline{I^{\textnormal{B}}(x)},
\end{equation}
where $\widehat{\mathcal{K}}$ is equipped with the Wijsman topology, is Borel measurable.
\begin{proof} As noted in \cite{DMT19}, it suffices to show that the set
\[
B^{V} \coloneqq \big\{ x \in \Rd \colon C^{\textnormal{B}}(x) \cap V \neq \varnothing \big\}
\]
is Borel measurable, for every open set $V \subseteq \Rd$. To see this, we fix an open set $V \subseteq \Rd$ and note that
\[
B^{V} = \big\{ x \in \Rd \colon I^{\textnormal{b}}(x) \cap V \neq \varnothing \big\}.
\]
Consequently,
\[
B^{V} = 
\bigcup_{\varepsilon > 0} \,
\bigcup_{M > 0} \,
\bigcap_{n \in \mathds{N}}  
\bigcap_{\substack{\bar{\psi}_{n} \in [\psi_{n}], \\ \{ \bar{\psi}_{n} < M\} \cap V \neq \varnothing}}  \big\{ x \in \Rd \colon \bar{\psi}_{n}(x) < \varepsilon \big\}.
\]
Hence $B^{V}$ is Borel measurable if we can show that the last intersection can be replaced by a countable intersection. In order to do so, we fix a basis $(e_{j})_{j=1}^{d}$ of $\Rd$ as well as representatives $\bar{\psi}_{n} \in [\psi_{n}]$, and define the equivalence class $[\psi_{n}]^{\mathds{Q}}$ to consist of all functions $\tilde{\psi}_{n} \in [\psi_{n}]$ such that the affine function $\bar{\psi}_{n} - \tilde{\psi}_{n}$ has rational coefficients when expressed in the basis $(e_{j})_{j=1}^{d}$. We then have to show that
\[
U \coloneqq \bigcap_{\substack{\bar{\psi}_{n} \in [\psi_{n}], \\ \{ \bar{\psi}_{n} < M\} \cap V \neq \varnothing}}  \big\{ x \in \Rd \colon \bar{\psi}_{n}(x) < \varepsilon \big\}
= 
\bigcap_{\substack{\tilde{\psi}_{n} \in [\psi_{n}]^{\mathds{Q}}, \\ \{ \tilde{\psi}_{n} < M\} \cap V \neq \varnothing}}  \big\{ x \in \Rd \colon \tilde{\psi}_{n}(x) < \varepsilon \big\}
 \eqqcolon U^{\mathds{Q}}.
\]
Clearly $U \subseteq U^{\mathds{Q}}$, so that it remains to show the inclusion $\Rd \setminus U \subseteq \Rd \setminus U^{\mathds{Q}}$. Fix $x \in \Rd$ and $\bar{\psi}_{n} \in [\psi_{n}]$ such that $\{ \bar{\psi}_{n} < M\} \cap V  \neq \varnothing$ and $\{\bar{\psi}_{n}(x) \geqslant \varepsilon \}$, i.e., $x \in \Rd \setminus U$. Choose $\eta > 0$ such that we still have
\[
\{ \bar{\psi}_{n} + \eta < M\} \cap V  \neq \varnothing.
\]
Note that $\bar{\psi}_{n} + \eta$ is bounded from below by $\eta$ and that the function $\bar{\psi}_{n}$ is in $\Cqaff$, so that $\bar{\psi}_{n}(y) \geqslant \frac{\vert y \vert^{2}}{4}$ for $y$ outside a bounded set. Hence we can find a representative $\tilde{\psi}_{n} \in [\psi_{n}]^{\mathds{Q}}$ such that 
\[
\tilde{\psi}_{n} \geqslant 0, \qquad
\{ \tilde{\psi}_{n} < M\} \cap V  \neq \varnothing 
\qquad \textnormal{ and } \qquad
\{\tilde{\psi}_{n}(x) \geqslant \varepsilon \}.
\]
Therefore $x \in \Rd \setminus U^{\mathds{Q}}$. This shows the claim $U = U^{\mathds{Q}}$ and we conclude that the set $B^{V}$ is Borel measurable.
\end{proof}
\end{proposition}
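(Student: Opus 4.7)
The natural strategy is to invoke a standard characterization of Borel measurability for set-valued maps with values in $\widehat{\mathcal{K}}$ equipped with the Wijsman topology (as used in \cite{DMT19}): it suffices to show that for every open set $V \subseteq \Rd$, the set $B^{V} \coloneqq \{x \in \Rd \colon C^{\textnormal{B}}(x) \cap V \neq \varnothing\}$ is Borel measurable. Because $V$ is open and $C^{\textnormal{B}}(x) = \overline{I^{\textnormal{b}}(x)}$, the condition $C^{\textnormal{B}}(x) \cap V \neq \varnothing$ is equivalent to $I^{\textnormal{b}}(x) \cap V \neq \varnothing$, so I work with $I^{\textnormal{b}}(x)$ directly.

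Next I would translate this condition into a quantitative statement about representatives of $(\psi_{n})_{n \geqslant 1}$. Unfolding Definition \ref{def_sbm_paving}, $y \in I^{\textnormal{b}}(x)$ asserts the existence of $\varepsilon, M > 0$ such that every $\bar{\psi}_{n} \in [\psi_{n}]$ satisfying $\bar{\psi}_{n}(x) < \varepsilon$ obeys $\bar{\psi}_{n}(y) \leqslant M$. Quantifying existentially over $y \in V$ and rearranging via contrapositive, I expect to obtain the representation
\[
B^{V} = \bigcup_{\varepsilon > 0} \, \bigcup_{M > 0} \, \bigcap_{n \in \mathds{N}} \, \bigcap_{\substack{\bar{\psi}_{n} \in [\psi_{n}] \\ \{\bar{\psi}_{n} < M\} \cap V \neq \varnothing}} \, \big\{ x \in \Rd \colon \bar{\psi}_{n}(x) < \varepsilon \big\}.
\]
The outer unions can be restricted to positive rationals without loss of generality, and each basic set $\{ x \colon \bar{\psi}_{n}(x) < \varepsilon\}$ is open by continuity of $\bar{\psi}_{n}$. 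Hence the entire difficulty is concentrated in the inner intersection.

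The main obstacle is that this inner intersection ranges over the uncountable family $[\psi_{n}]$ of all nonnegative representatives, so one must reduce it to a countable sub-family. My plan is to fix a basis $(e_{j})_{j = 1}^{d}$ of $\Rd$ and a single reference representative $\bar{\psi}_{n} \in [\psi_{n}]$, and to consider the countable class $[\psi_{n}]^{\mathds{Q}}$ of those $\tilde{\psi}_{n} \in [\psi_{n}]$ which differ from $\bar{\psi}_{n}$ by an affine function with rational coefficients in this basis. The task then becomes to show that the intersection over $[\psi_{n}]^{\mathds{Q}}$ equals the intersection over $[\psi_{n}]$; one inclusion is immediate. For the reverse, given any real-coefficient $\bar{\psi}_{n}$ with $\{ \bar{\psi}_{n} < M\} \cap V \neq \varnothing$ and $\bar{\psi}_{n}(x) \geqslant \varepsilon$, I would perturb the affine part by a small rational amount to produce $\tilde{\psi}_{n} \in [\psi_{n}]^{\mathds{Q}}$ that still meets these two conditions \emph{and} remains nonnegative. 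The crucial ingredient that makes this perturbation feasible is the assumption $(\psi_{n})_{n \geqslant 1} \subseteq \Cqaff$: the quadratic growth forces $\bar{\psi}_{n}(y) \geqslant \vert y \vert^{2}/4$ outside a sufficiently large ball, so a small affine correction cannot destroy nonnegativity. Once this reduction is established, $B^{V}$ is a countable union of countable intersections of open sets, hence Borel, completing the proof.
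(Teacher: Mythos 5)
Your proposal follows the paper's own proof essentially line for line: the reduction via \cite{DMT19} to showing $B^{V}$ is Borel, the passage from $C^{\textnormal{B}}$ to $I^{\textnormal{b}}$ using that $V$ is open, the union--intersection representation of $B^{V}$, the reduction of the inner intersection to the countable class $[\psi_{n}]^{\mathds{Q}}$ of rational-coefficient affine perturbations, and the key perturbation step using the quadratic growth from $\Cqaff$ to preserve nonnegativity. Your additional remark that the outer unions over $\varepsilon$ and $M$ can be restricted to positive rationals, making the overall structure explicitly a countable union of countable intersections, is a small but welcome clarification that the paper leaves implicit.
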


Having established the Borel measurability of the map $x \mapsto C^{\textnormal{B}}(x)$, we can disintegrate the probability measure $\mu$ with respect to this mapping. 

\begin{lemma} \label{lem_dis_theo} Let $(\psi_{n})_{n \geqslant 1} \subseteq \Cqaff$ be a sequence of convex functions. Define $\kappa^{\textnormal{B}} \coloneqq C^{\textnormal{B}}(\mu)$ to be the pushforward measure of $\mu$ under the map $C^{\textnormal{B}} \colon \Rd \rightarrow \widehat{\mathcal{K}}$ of \eqref{sbm_map_prop_one}, which induces a probability measure on $\widehat{\mathcal{K}}$ (recall that we identify $\widehat{\mathcal{K}}$ with $\operatorname{ri} \widehat{\mathcal{K}}$). There exists a $\kappa^{\textnormal{B}}$-a.s.\ unique family of probability measures $\{\mu^{I^{\textnormal{B}}}\}_{I^{\textnormal{B}} \in \, \widehat{\mathcal{K}}} \subseteq \PP_{2}(\Rd)$ such that $\mu^{I^{\textnormal{B}}}(I^{\textnormal{B}}) = 1$, for $\kappa^{\textnormal{B}}$-a.e.\ $I^{\textnormal{B}} \in \widehat{\mathcal{K}}$, and 
\begin{equation} \label{lem_dis_theo_eq_mu}
\mu(\, \cdot \,) 
= \int_{\widehat{\mathcal{K}}} \, \mu^{I^{\textnormal{B}}}(\, \cdot \,) \ \kappa^{\textnormal{B}}(dI^{\textnormal{B}}).
\end{equation}
\begin{proof} The statement follows from applying the disintegration theorem (see, for instance, \cite[Theorem 5.3.1]{AGS08} or \cite[Chapter III, p.\ 78]{DM78}), which is possible thanks to the Borel measurability of the function \eqref{sbm_map_prop_one}.
\end{proof}
\end{lemma}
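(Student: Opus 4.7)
The plan is to invoke the classical disintegration theorem for Borel probability measures on Polish spaces (for instance \cite[Theorem 5.3.1]{AGS08} or \cite[Chapter III]{DM78}) applied to the map $C^{\textnormal{B}}\colon\Rd\to\widehat{\mathcal{K}}$. The three hypotheses are readily verified: both $\Rd$ and $\widehat{\mathcal{K}}$ (equipped with the Wijsman topology) are Polish, $\mu$ is a Borel probability measure on $\Rd$, and $C^{\textnormal{B}}$ is Borel measurable by Proposition \ref{sbm_map_prop}. The pushforward $\kappa^{\textnormal{B}} \coloneqq C^{\textnormal{B}}(\mu)$ is therefore a well-defined Borel probability measure on $\widehat{\mathcal{K}}$, and the disintegration theorem yields a $\kappa^{\textnormal{B}}$-a.s.\ unique family $\{\mu^{I^{\textnormal{B}}}\}_{I^{\textnormal{B}} \in \widehat{\mathcal{K}}}$ of Borel probability measures on $\Rd$ satisfying the identity \eqref{lem_dis_theo_eq_mu}.

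It then remains to identify where each $\mu^{I^{\textnormal{B}}}$ sits and to check the second-moment condition. The disintegration theorem asserts that $\mu^{I^{\textnormal{B}}}$ is concentrated on the fiber $(C^{\textnormal{B}})^{-1}(C^{\textnormal{B}}) = \{x \in \Rd \colon C^{\textnormal{B}}(x) = C^{\textnormal{B}}\}$, for $\kappa^{\textnormal{B}}$-a.e.\ $I^{\textnormal{B}}$ (where, in line with the convention recalled in the statement, $I^{\textnormal{B}}$ is identified with $C^{\textnormal{B}} = \overline{I^{\textnormal{B}}}$ via the map $C\mapsto \operatorname{ri} C$). For any $x$ in this fiber, Proposition \ref{prop_sbm_paving}\ref{icpodo} gives $x \in I^{\textnormal{B}}(x)$, and the equality $\overline{I^{\textnormal{B}}(x)} = C^{\textnormal{B}}$ forces $I^{\textnormal{B}}(x) = \operatorname{ri} C^{\textnormal{B}} = I^{\textnormal{B}}$ (since distinct elements of the paving are disjoint by Proposition \ref{prop_sbm_paving}\ref{icpodpartwo}); hence $x \in I^{\textnormal{B}}$. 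This proves $\mu^{I^{\textnormal{B}}}(I^{\textnormal{B}}) = 1$ for $\kappa^{\textnormal{B}}$-a.e.\ $I^{\textnormal{B}}$.

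To see that $\mu^{I^{\textnormal{B}}} \in \PP_{2}(\Rd)$ for $\kappa^{\textnormal{B}}$-a.e.\ $I^{\textnormal{B}}$, it suffices to apply the disintegration identity \eqref{lem_dis_theo_eq_mu} to the Borel function $x \mapsto \vert x \vert^{2}$, obtaining
\[
\int_{\widehat{\mathcal{K}}} \Big( \int_{\Rd} \vert x \vert^{2} \, \mu^{I^{\textnormal{B}}}(dx) \Big) \, \kappa^{\textnormal{B}}(dI^{\textnormal{B}}) = \int_{\Rd} \vert x \vert^{2} \, \mu(dx) < + \infty,
\]
so that the inner integral is finite for $\kappa^{\textnormal{B}}$-a.e.\ $I^{\textnormal{B}}$. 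I do not foresee any serious obstacle: the entire argument is essentially a bookkeeping exercise combining Proposition \ref{sbm_map_prop} (measurability), Proposition \ref{prop_sbm_paving} (the paving property ensuring that fibers of $C^{\textnormal{B}}$ are contained in the corresponding $I^{\textnormal{B}}$), and the standard disintegration theorem; the only mild subtlety is the identification between $I^{\textnormal{B}}$ and $C^{\textnormal{B}}$ via relative interior/closure, already built into the setup.
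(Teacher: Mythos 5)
Your proposal is correct and follows the same route the paper takes: the paper's proof consists of a single sentence invoking the disintegration theorem of \cite[Theorem 5.3.1]{AGS08} (or \cite[Chapter III, p.\ 78]{DM78}), which is applicable because of the Borel measurability of $C^{\textnormal{B}}$ established in Proposition \ref{sbm_map_prop}. You have merely spelled out the two bookkeeping checks the paper leaves implicit (that each conditional measure charges the correct cell $I^{\textnormal{B}}$, and that it has finite second moment); both verifications are correct, though in the first one the step $\overline{I^{\textnormal{B}}(x)} = C^{\textnormal{B}} \Rightarrow I^{\textnormal{B}}(x) = \operatorname{ri} C^{\textnormal{B}}$ really rests on the standard convex-analysis identity $\operatorname{ri}(\overline{A}) = \operatorname{ri}(A)$ for a convex set $A$ rather than on the disjointness clause of Proposition \ref{prop_sbm_paving}\ref{icpodpartwo}.
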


Note that Lemma \ref{lem_dis_theo} rigorously justifies the decomposition \eqref{eq_gen_case_deco_mu_fit} as already stated in Section \ref{sec_int_gen_case}.

\smallskip

We refer to Examples \ref{ex_two_bm_gen_case}  and \ref{example_circles} below for a concrete visualization of Lemma \ref{lem_dis_theo}. For example, the measure $\kappa^{\textnormal{B}}$ in Example \ref{ex_two_bm_gen_case}, considered as a probability measure on $\widehat{\mathcal{K}}$, gives probabilities $\frac{1}{2}$ to $\R_{-} = (-\infty,0)$ and $\R_{+} = (0,+\infty)$, as well as probability $0$ to the singleton $\{0\}$. The Bass paving consists of the relatively open sets $\R_{-}$, $\{0\}$, $\R_{+}$, where $\{0\}$ has measure zero with respect to $\kappa^{\textnormal{B}}$ and may therefore be neglected. The probability measures $\mu^{\R_{-}}$ and $\mu^{\R_{+}}$ are the normalized restrictions of $\mu$ to $\R_{-}$ and $\R_{+}$, respectively.

\section{The decomposition of the primal problem}

The disintegration result of Lemma \ref{lem_dis_theo} allows us to decompose the primal problem \eqref{pp_mbb_mcov_dt_f} from $\mu$ to $\nu$ into a family of \textit{local} primal problems, defined on $\kappa^{\textnormal{B}}$-a.e.\ set $I^{\textnormal{B}} \in \widehat{\mathcal{K}}$. To this end, we define (as in \eqref{eq_gen_case_deco_nu_fit} above) the $\kappa^{\textnormal{B}}$-a.s.\ unique family of probability measures $\{\nu^{I^{\textnormal{B}}}\}_{I^{\textnormal{B}} \in \, \widehat{\mathcal{K}}} \subseteq \PP_{2}(\Rd)$ by 
\begin{equation} \label{eq_dis_nu_sbm_single}
\nu^{I^{\textnormal{B}}}(\, \cdot \,) \coloneqq 
\int_{I^{\textnormal{B}}} \, \pi_{x}^{\sbm}(\, \cdot \,) \ \mu^{I^{\textnormal{B}}}(dx),
\end{equation}
where $\pi^{\sbm}(dx,dy) = \pi_{x}^{\sbm}(dy) \, \mu(dx) \in \MT(\mu,\nu)$ is the optimizer of the primal problem \eqref{pp_mbb_mcov_dt_f}. As noted in \eqref{eq_gen_case_deco_nu_fit_decomp}, the family $\{\nu^{I^{\textnormal{B}}}\}_{I^{\textnormal{B}} \in \, \widehat{\mathcal{K}}}$ then is a decomposition of $\nu$, i.e.
\begin{equation} \label{eq_dis_nu_sbm}
\nu(\, \cdot \,) = 
\int_{\widehat{\mathcal{K}}} \, \nu^{I^{\textnormal{B}}}(\, \cdot \,) \ \kappa^{\textnormal{B}}(dI^{\textnormal{B}}).
\end{equation}
By construction we have that $\mu^{I^{\textnormal{B}}} \lc \nu^{I^{\textnormal{B}}}$, for $\kappa^{\textnormal{B}}$-a.e.\ $I^{\textnormal{B}} \in \widehat{\mathcal{K}}$. We obtain the following decomposition of the primal problem \eqref{pp_mbb_mcov_dt_f}.

\begin{lemma}[\textsc{Decomposition of the primal problem}] \label{lem_dotpsbmp} For $\kappa^{\textnormal{B}}$-a.e.\ $I^{\textnormal{B}} \in \widehat{\mathcal{K}}$, the optimizer of the local primal problem
\[
P(\mu^{I^{\textnormal{B}}},\nu^{I^{\textnormal{B}}}) 
= \sup_{\pi \in \MT(\mu^{I^{\textnormal{B}}},\nu^{I^{\textnormal{B}}})} \int \MCov(\pi_{x},\gamma) \, \mu^{I^{\textnormal{B}}}(dx)
\]
from $\mu^{I^{\textnormal{B}}}$ to $\nu^{I^{\textnormal{B}}}$ equals
\begin{equation} \label{eq_def_pisbm_loc_second}
\pi^{I^{\textnormal{B}}}(dx,dy) 
\coloneqq \pi_{x}^{\sbm}(dy) \, \mu^{I^{\textnormal{B}}}(dx) \in \MT(\mu^{I^{\textnormal{B}}},\nu^{I^{\textnormal{B}}}).
\end{equation}
In particular, 
\begin{align}
P(\mu,\nu)
&= \sup_{\pi \in \MT(\mu,\nu)} \int \MCov(\pi_{x},\gamma) \, \mu(dx) 
= \int \MCov(\pi_{x}^{\sbm},\gamma) \, \mu(dx)  \label{lem_dotpsbmp_i_i} \\
&= \int_{\widehat{\mathcal{K}}} \, \bigg(\int \MCov(\pi_{x}^{\sbm},\gamma) \, \mu^{I^{\textnormal{B}}}(dx) \bigg) \, \kappa^{\textnormal{B}}(dI^{\textnormal{B}}) \label{lem_dotpsbmp_i_ii} \\
&= \int_{\widehat{\mathcal{K}}} \, P(\mu^{I^{\textnormal{B}}},\nu^{I^{\textnormal{B}}}) \, \kappa^{\textnormal{B}}(dI^{\textnormal{B}}).\label{lem_dotpsbmp_i_iii} 
\end{align}
\end{lemma}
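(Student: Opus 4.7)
The plan is to verify first that $\pi^{I^{\textnormal{B}}}$ belongs to $\MT(\mu^{I^{\textnormal{B}}},\nu^{I^{\textnormal{B}}})$, next to deduce the chain of equalities \eqref{lem_dotpsbmp_i_i}--\eqref{lem_dotpsbmp_i_iii} directly from the disintegration of $\mu$, and finally to upgrade these into pointwise optimality for $\kappa^{\textnormal{B}}$-a.e.\ $I^{\textnormal{B}}$ via a measurable gluing argument at the global level.

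First I would check the marginals and the martingale identity for $\pi^{I^{\textnormal{B}}}$. The first marginal is $\mu^{I^{\textnormal{B}}}$ by construction \eqref{eq_def_pisbm_loc_second}; the second marginal equals $\nu^{I^{\textnormal{B}}}$ by the very definition \eqref{eq_dis_nu_sbm_single}; and the martingale property $\bary(\pi_{x}^{\sbm}) = x$ holds for $\mu$-a.e.\ $x$, hence for $\mu^{I^{\textnormal{B}}}$-a.e.\ $x$ because $\mu^{I^{\textnormal{B}}}$ is essentially the normalized restriction of $\mu$ to $I^{\textnormal{B}}$. Thus $\pi^{I^{\textnormal{B}}} \in \MT(\mu^{I^{\textnormal{B}}},\nu^{I^{\textnormal{B}}})$. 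The equalities \eqref{lem_dotpsbmp_i_i} and \eqref{lem_dotpsbmp_i_ii} are then immediate from the disintegration \eqref{lem_dis_theo_eq_mu} combined with the Borel measurability of $x \mapsto \MCov(\pi_{x}^{\sbm},\gamma)$.

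For the identity \eqref{lem_dotpsbmp_i_iii}, I would prove a two-sided inequality. The ``$\leqslant$'' direction is a free consequence of Step 1: since $\pi^{I^{\textnormal{B}}} \in \MT(\mu^{I^{\textnormal{B}}},\nu^{I^{\textnormal{B}}})$, we have $\int \MCov(\pi_{x}^{\sbm},\gamma)\,\mu^{I^{\textnormal{B}}}(dx) \leqslant P(\mu^{I^{\textnormal{B}}},\nu^{I^{\textnormal{B}}})$, and integration against $\kappa^{\textnormal{B}}$ gives $P(\mu,\nu) \leqslant \int P(\mu^{I^{\textnormal{B}}},\nu^{I^{\textnormal{B}}})\,\kappa^{\textnormal{B}}(dI^{\textnormal{B}})$. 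For the reverse ``$\geqslant$'' direction I would invoke a measurable selection result to obtain a $\kappa^{\textnormal{B}}$-measurable family $\{\tilde{\pi}^{I^{\textnormal{B}}}\}$ with $\tilde{\pi}^{I^{\textnormal{B}}} \in \MT(\mu^{I^{\textnormal{B}}},\nu^{I^{\textnormal{B}}})$ attaining $P(\mu^{I^{\textnormal{B}}},\nu^{I^{\textnormal{B}}})$; existence of a local maximizer is standard since the cost functional is weakly upper semicontinuous and $\MT(\mu^{I^{\textnormal{B}}},\nu^{I^{\textnormal{B}}})$ is weakly compact under a uniform second-moment bound. Gluing via $\tilde{\pi}(\,\cdot\,) \coloneqq \int_{\widehat{\mathcal{K}}} \tilde{\pi}^{I^{\textnormal{B}}}(\,\cdot\,)\,\kappa^{\textnormal{B}}(dI^{\textnormal{B}})$ produces an element of $\MT(\mu,\nu)$: the first marginal is $\mu$ by \eqref{lem_dis_theo_eq_mu}, the second marginal is $\nu$ by \eqref{eq_dis_nu_sbm}, and the martingale property survives because $\{I^{\textnormal{B}}(x)\}_{x\in\Rd}$ is a partition of $\Rd$ (Proposition \ref{prop_sbm_paving}), so $\tilde{\pi}_{x} = \tilde{\pi}^{I^{\textnormal{B}}(x)}_{x}$ for $\mu$-a.e.\ $x$. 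The cost of $\tilde{\pi}$ equals $\int P(\mu^{I^{\textnormal{B}}},\nu^{I^{\textnormal{B}}})\,\kappa^{\textnormal{B}}(dI^{\textnormal{B}})$, and optimality of $\pi^{\sbm}$ at the global level forces $P(\mu,\nu) \geqslant \int P(\mu^{I^{\textnormal{B}}},\nu^{I^{\textnormal{B}}})\,\kappa^{\textnormal{B}}(dI^{\textnormal{B}})$.

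Combining both inequalities with \eqref{lem_dotpsbmp_i_ii} forces equality throughout. Since the integrand $P(\mu^{I^{\textnormal{B}}},\nu^{I^{\textnormal{B}}}) - \int \MCov(\pi_{x}^{\sbm},\gamma)\,\mu^{I^{\textnormal{B}}}(dx)$ is pointwise nonnegative, it must vanish for $\kappa^{\textnormal{B}}$-a.e.\ $I^{\textnormal{B}}$, which proves that $\pi^{I^{\textnormal{B}}}$ attains the local supremum, and uniqueness of the stretched Brownian motion between ordered $\PP_{2}$-marginals (from \cite{BVBHK20}) identifies $\pi^{I^{\textnormal{B}}}$ as \emph{the} unique optimizer. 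I expect the main obstacle to be the measurable selection and gluing step: ensuring that the family $\{\tilde{\pi}^{I^{\textnormal{B}}}\}$ can be chosen $\kappa^{\textnormal{B}}$-measurably and that the integrated martingale property is preserved under the partition structure induced by the Bass paving. Once this technicality is discharged, the remainder of the argument is merely bookkeeping.
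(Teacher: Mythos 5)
Your proof is essentially correct and follows the same strategy as the paper: establish the chain of integral identities from the disintegration, and then transfer the global optimality of $\pi^{\sbm}$ to local optimality for $\kappa^{\textnormal{B}}$-a.e.\ $I^{\textnormal{B}}$ via a gluing argument. The only stylistic difference is that you argue directly via a two-sided inequality (proving $P(\mu,\nu) \geqslant \int P(\mu^{I^{\textnormal{B}}},\nu^{I^{\textnormal{B}}})\,\kappa^{\textnormal{B}}(dI^{\textnormal{B}})$ by gluing a measurable family of exact local optimizers), whereas the paper argues by contradiction: it supposes there exists a measurable set $\mathcal{B}$ of positive $\kappa^{\textnormal{B}}$-measure and a $\kappa^{\textnormal{B}}$-measurable family $\bar{\pi}^{I^{\textnormal{B}}}$ strictly improving the local cost on $\mathcal{B}$, glues this with $\pi^{\sbm}$ off $\mathcal{B}$, and derives a contradiction with the optimality of $\pi^{\sbm}$. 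The paper's formulation is marginally lighter on measurable-selection technology since it only posits a measurable \emph{improvement} rather than constructing a measurable family of exact maximizers; but both phrasings ultimately need a comparable selection argument to pass from ``pointwise non-optimality on a positive-measure set'' to ``a measurable improving family,'' and your version is the more explicit about where that technicality sits. Your observation that the glued transport has the correct martingale disintegration because the cells $\{I^{\textnormal{B}}(x)\}$ partition $\Rd$ is the correct justification and is implicit in the paper's definition of $\hat{\pi}$.
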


In other words, the martingale transport $\pi^{I^{\textnormal{B}}} \in \MT(\mu^{I^{\textnormal{B}}},\nu^{I^{\textnormal{B}}})$ of \eqref{eq_def_pisbm_loc_second} above is the stretched Brownian motion from $\mu^{I^{\textnormal{B}}}$ to $\nu^{I^{\textnormal{B}}}$.

\begin{proof}[Proof of Lemma \ref{lem_dotpsbmp}] We note that \eqref{lem_dotpsbmp_i_i} and \eqref{lem_dotpsbmp_i_ii} follow from the definition of stretched Brownian motion and \eqref{lem_dis_theo_eq_mu}, respectively. Therefore we only have to show the equality \eqref{lem_dotpsbmp_i_iii}. By contradiction, suppose there is a measurable set $\mathcal{B} \subseteq \widehat{\mathcal{K}}$ with $\kappa^{\textnormal{B}}(\mathcal{B}) > 0$ and a $\kappa^{\textnormal{B}}$-measurable function 
\[
\mathcal{B} \ni I^{\textnormal{B}} \longmapsto \bar{\pi}^{I^{\textnormal{B}}} \in \MT(\mu^{I^{\textnormal{B}}},\nu^{I^{\textnormal{B}}})
\]
such that
\[
\int \MCov(\bar{\pi}_{x}^{I^{\sbm}},\gamma) \, \mu^{I^{\textnormal{B}}}(dx)
> \int \MCov(\pi_{x}^{I^{\textnormal{B}}},\gamma) \, \mu^{I^{\textnormal{B}}}(dx).
\]
Now for $x \in \Rd$ define
\[ 
\hat{\pi}_{x}^{I^{\textnormal{B}}} \coloneqq
\begin{cases}
\bar{\pi}_{x}^{I^{\textnormal{B}}}, & I^{\textnormal{B}} \in \mathcal{B}, \\
\pi_{x}^{\sbm},& I^{\textnormal{B}} \in \widehat{\mathcal{K}} \setminus \mathcal{B}
\end{cases}
\]
and
\[
\hat{\pi}(dx,dy) \coloneqq \hat{\pi}_{x}^{I^{\textnormal{B}}}(dy) \, \mu^{I^{\textnormal{B}}}(dx) \, \kappa^{\textnormal{B}}(dI^{\textnormal{B}}) \in \MT(\mu,\nu).
\]
But then for the primal problem from $\mu$ to $\nu$ we have
\[
\int \MCov(\hat{\pi}_{x},\gamma) \, \mu(dx) 
> \int \MCov(\pi_{x}^{\sbm},\gamma) \, \mu(dx), 
\]
which is a contradiction to the optimality of $\pi^{\sbm}$.
\end{proof}


\section{The decomposition of the dual problem} \label{sec_gen_cas_tdotdp}

Let $(\psi_{n})_{n \geqslant 1} \subseteq \Cqaff$ be an optimizing sequence of convex functions. In the sequel, we have to assume that the values $(\mathcal{D}(\psi_{n}))_{n \geqslant 1}$ converge sufficiently fast to the optimal value $D(\mu,\nu)$. To this end, we define the \textit{suboptimality functional}\begin{equation} \label{def_func_subop}
\mathcal{S}(\psi) \coloneqq \mathcal{D}(\psi) - D(\mu,\nu), 
\qquad \psi \in \Cqaff,
\end{equation}
measuring the difference of $\mathcal{D}(\psi)$ from the optimal value $D(\mu,\nu)$. In Lemma \ref{prop_loc_fast_conv} below we will see that the ``fast convergence condition'' 
\begin{equation} \label{eq_def_psi_i}
\sum_{n \geqslant 1} \mathcal{S}(\psi_{n}) < +\infty
\end{equation}
on $(\psi_{n})_{n \geqslant 1}$, which we have already introduced in \eqref{eq_gen_case_fcc_fti} above, implies that the sequence $(\psi_{n})_{n \geqslant 1}$ is not only optimal for the pair $(\mu,\nu)$, but also for $\kappa^{\textnormal{B}}$-a.e.\ pair $(\mu^{I^{\textnormal{B}}},\nu^{I^{\textnormal{B}}})$.

\smallskip

For a fixed set $I^{\textnormal{B}} \in \widehat{\mathcal{K}}$ we consider, by analogy with \eqref{dual_prob_gen_sec_form}, the \textit{local} dual problem from $\mu^{I^{\textnormal{B}}}$ to $\nu^{I^{\textnormal{B}}}$
\[
D(\mu^{I^{\textnormal{B}}},\nu^{I^{\textnormal{B}}}) = 
\inf_{\substack{\psi \in \Cqaff, \\ \textnormal{$\psi$ convex}}}\mathcal{D}^{I^{\textnormal{B}}}(\psi),
\]
where the \textit{local} dual function $\mathcal{D}^{I^{\textnormal{B}}}(\, \cdot \,)$ now is given by
\[
\mathcal{D}^{I^{\textnormal{B}}}(\psi) \coloneqq \int \psi \, d\nu^{I^{\textnormal{B}}} - \int (\psi^{\ast} \ast \gamma)^{\ast} \, d \mu^{I^{\textnormal{B}}},
\qquad \psi \in \Cqaff.
\]
Similarly as in \eqref{def_func_subop}, we define the \textit{local} suboptimality functional
\[
\mathcal{S}^{I^{\textnormal{B}}}(\psi) \coloneqq \mathcal{D}^{I^{\textnormal{B}}}(\psi) - D(\mu^{I^{\textnormal{B}}},\nu^{I^{\textnormal{B}}}),
\qquad \psi \in \Cqaff.
\]

\begin{lemma}[\textsc{Decomposition of the dual problem}] \label{prop_loc_fast_conv} \
\begin{enumerate}[label=(\roman*)] 
\item \label{prop_loc_fast_conv_i} For a convex function $\psi \in \Cqaff$ we have 
\begin{equation} \label{eq_prop_01_g_c} 
\mathcal{S}(\psi) 
= \int_{\widehat{\mathcal{K}}} \, \mathcal{S}^{I^{\textnormal{B}}}(\psi) \ \kappa^{\textnormal{B}}(dI^{\textnormal{B}}).
\end{equation}
\item \label{prop_loc_fast_conv_ii} Let $(\psi_{n})_{n \geqslant 1} \subseteq \Cqaff$ be a sequence of convex functions satisfying \eqref{eq_def_psi_i}. Then, for $\kappa^{\textnormal{B}}$-a.e.\ $I^{\textnormal{B}} \in \widehat{\mathcal{K}}$, we have that $(\psi_{n})_{n \geqslant 1}$ is an optimizing sequence for the local dual problem from $\mu^{I^{\textnormal{B}}}$ to $\nu^{I^{\textnormal{B}}}$, i.e.
\begin{equation} \label{eq_prop_01_g_cd} 
D(\mu^{I^{\textnormal{B}}},\nu^{I^{\textnormal{B}}}) = \lim_{n \rightarrow \infty} \mathcal{D}^{I^{\textnormal{B}}}(\psi_{n}).
\end{equation}
\end{enumerate}
\begin{proof} \ref{prop_loc_fast_conv_i} We fix a convex function $\psi \in \Cqaff$ and want to show \eqref{eq_prop_01_g_c}. From Lemma \ref{lem_dotpsbmp} we know that
\[
P(\mu,\nu) 
= \int_{\widehat{\mathcal{K}}} \, P(\mu^{I^{\textnormal{B}}},\nu^{I^{\textnormal{B}}}) \ \kappa^{\textnormal{B}}(dI^{\textnormal{B}}).
\]
Therefore, since by Theorem \ref{theorem.1.4.o1a} we have $P(\mu,\nu) = D(\mu,\nu)$ and $P(\mu^{I^{\textnormal{B}}},\nu^{I^{\textnormal{B}}}) = D(\mu^{I^{\textnormal{B}}},\nu^{I^{\textnormal{B}}})$, for $\kappa^{\textnormal{B}}$-a.e.\ $I^{\textnormal{B}} \in \widehat{\mathcal{K}}$, it suffices to show that
\[
\mathcal{D}(\psi) 
= \int_{\widehat{\mathcal{K}}} \, \mathcal{D}^{I^{\textnormal{B}}}(\psi) \ \kappa^{\textnormal{B}}(dI^{\textnormal{B}}),
\]
which is immediate from \eqref{lem_dis_theo_eq_mu} and \eqref{eq_dis_nu_sbm}.

\smallskip

\noindent \ref{prop_loc_fast_conv_ii} Note that for any $\psi \in \Cqaff$ and for $\kappa^{\textnormal{B}}$-a.e.\ $I^{\textnormal{B}} \in \widehat{\mathcal{K}}$, we have $\mathcal{S}^{I^{\textnormal{B}}}(\psi) \geqslant 0$. By \eqref{eq_prop_01_g_c} and \eqref{eq_def_psi_i} we thus have
\begin{align*}
\sum_{n \geqslant 1} \mathcal{S}(\psi_{n}) 
&= \sum_{n \geqslant 1} \int_{\widehat{\mathcal{K}}} \, \mathcal{S}^{I^{\textnormal{B}}}(\psi_{n}) \ \kappa^{\textnormal{B}}(dI^{\textnormal{B}}) \\
&= \int_{\widehat{\mathcal{K}}} \, \Big( \sum_{n \geqslant 1} \mathcal{S}^{I^{\textnormal{B}}}(\psi_{n}) \Big) \ \kappa^{\textnormal{B}}(dI^{\textnormal{B}}) < +\infty.
\end{align*}
Therefore $\lim_{n \rightarrow \infty} \mathcal{S}^{I^{\textnormal{B}}}(\psi_{n}) = 0$, for $\kappa^{\textnormal{B}}$-a.e.\ $I^{\textnormal{B}} \in \widehat{\mathcal{K}}$, which gives \eqref{eq_prop_01_g_cd}.
\end{proof}
\end{lemma}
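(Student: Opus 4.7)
\textbf{Proof plan for Lemma \ref{prop_loc_fast_conv}.}

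For part \ref{prop_loc_fast_conv_i}, my plan is to reduce the claimed identity $\mathcal{S}(\psi) = \int_{\widehat{\mathcal{K}}} \mathcal{S}^{I^{\textnormal{B}}}(\psi) \, \kappa^{\textnormal{B}}(dI^{\textnormal{B}})$ to the disintegration formulas already at our disposal. First, observe that by the definitions of $\mathcal{D}$ and $\mathcal{D}^{I^{\textnormal{B}}}$, along with the disintegrations \eqref{lem_dis_theo_eq_mu} for $\mu$ and \eqref{eq_dis_nu_sbm} for $\nu$, we have
\[
\mathcal{D}(\psi) = \int \psi \, d\nu - \int (\psi^{\ast} \ast \gamma)^{\ast} \, d\mu = \int_{\widehat{\mathcal{K}}} \, \mathcal{D}^{I^{\textnormal{B}}}(\psi) \ \kappa^{\textnormal{B}}(dI^{\textnormal{B}})
\]
by Fubini. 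Combining this with Lemma \ref{lem_dotpsbmp}, which gives $P(\mu,\nu) = \int_{\widehat{\mathcal{K}}} P(\mu^{I^{\textnormal{B}}},\nu^{I^{\textnormal{B}}}) \, \kappa^{\textnormal{B}}(dI^{\textnormal{B}})$, and the identities $P(\mu,\nu) = D(\mu,\nu)$ and $P(\mu^{I^{\textnormal{B}}},\nu^{I^{\textnormal{B}}}) = D(\mu^{I^{\textnormal{B}}},\nu^{I^{\textnormal{B}}})$ (the latter holding $\kappa^{\textnormal{B}}$-a.s.\ by Theorem \ref{theorem.1.4.o1a}), subtracting the two integrated identities yields \eqref{eq_prop_01_g_c}.

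For part \ref{prop_loc_fast_conv_ii}, the key ingredient is the non-negativity of the local suboptimality functional, $\mathcal{S}^{I^{\textnormal{B}}}(\psi_n) \geqslant 0$ for $\kappa^{\textnormal{B}}$-a.e.\ $I^{\textnormal{B}}$. Granted this, Tonelli's theorem lets me interchange the sum and the integral in
\[
\sum_{n \geqslant 1} \mathcal{S}(\psi_{n}) \;=\; \sum_{n \geqslant 1} \int_{\widehat{\mathcal{K}}} \mathcal{S}^{I^{\textnormal{B}}}(\psi_{n}) \, \kappa^{\textnormal{B}}(dI^{\textnormal{B}}) \;=\; \int_{\widehat{\mathcal{K}}} \Big( \sum_{n \geqslant 1} \mathcal{S}^{I^{\textnormal{B}}}(\psi_{n}) \Big) \, \kappa^{\textnormal{B}}(dI^{\textnormal{B}}).
\]
By hypothesis \eqref{eq_def_psi_i} the left-hand side is finite, hence the inner sum is $\kappa^{\textnormal{B}}$-a.s.\ finite, and in particular $\mathcal{S}^{I^{\textnormal{B}}}(\psi_n) \to 0$ for $\kappa^{\textnormal{B}}$-a.e.\ $I^{\textnormal{B}}$, which is \eqref{eq_prop_01_g_cd}.

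\emph{Main obstacle.} The only delicate point is establishing $\mathcal{S}^{I^{\textnormal{B}}}(\psi) \geqslant 0$, i.e., weak duality for the local pair $(\mu^{I^{\textnormal{B}}},\nu^{I^{\textnormal{B}}})$. This is exactly where Theorem \ref{theorem.1.4.o1a} (applied locally) intervenes: it identifies $D(\mu^{I^{\textnormal{B}}},\nu^{I^{\textnormal{B}}})$ with the \emph{infimum} of the local dual function over admissible convex $\psi$, so in particular $\mathcal{D}^{I^{\textnormal{B}}}(\psi) \geqslant D(\mu^{I^{\textnormal{B}}},\nu^{I^{\textnormal{B}}})$ for every $\psi \in \Cqaff$. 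One must of course verify that $\mu^{I^{\textnormal{B}}} \lc \nu^{I^{\textnormal{B}}}$ $\kappa^{\textnormal{B}}$-a.s., but this is built into the construction \eqref{eq_dis_nu_sbm_single} since each $\pi_{x}^{\sbm}$ has barycenter $x$, making $\pi^{I^{\textnormal{B}}}$ a martingale transport. Everything else is bookkeeping with Fubini and the disintegration \eqref{lem_dis_theo_eq_mu}--\eqref{eq_dis_nu_sbm}; no new analytical estimate is required beyond what is already available.
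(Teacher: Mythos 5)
Your proposal is correct and follows essentially the same route as the paper: both parts rely on the same three ingredients (the disintegrations of $\mu$ and $\nu$, Lemma \ref{lem_dotpsbmp} for the primal decomposition, and the absence of a duality gap from Theorem \ref{theorem.1.4.o1a}) for part \ref{prop_loc_fast_conv_i}, and the same Tonelli interchange plus nonnegativity of $\mathcal{S}^{I^{\textnormal{B}}}$ for part \ref{prop_loc_fast_conv_ii}. Your brief remark on why $\mathcal{S}^{I^{\textnormal{B}}}(\psi) \geqslant 0$ (weak duality, with $\mu^{I^{\textnormal{B}}} \lc \nu^{I^{\textnormal{B}}}$ coming from the martingale property of $\pi_{x}^{\sbm}$) just makes explicit what the paper takes as understood.
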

 
With the help of Lemma \ref{prop_loc_fast_conv}, we obtain the following consequence from Proposition \ref{prop_psinbar_bound_sharper}.

\begin{proposition} \label{lem_cruc_incl} Let $(\psi_{n})_{n \geqslant 1} \subseteq \Cqaff$ be a sequence of convex functions satisfying \eqref{eq_def_psi_i}. Then the following assertions hold for $\kappa^{\textnormal{B}}$-a.e.\ $I^{\textnormal{B}} \in \widehat{\mathcal{K}}$:
\begin{enumerate}[label=(\roman*)] 
\item \label{lem_cruc_incl_i_i} $\mu^{I^{\textnormal{B}}} \lc \nu^{I^{\textnormal{B}}}$, $\mu^{I^{\textnormal{B}}}(I^{\textnormal{B}}) = 1$, and 
\begin{equation} \label{lem_cruc_incl_i}
\widehat{\supp}(\nu^{I^{\textnormal{B}}}) = C^{\textnormal{B}},    
\end{equation}
where $\mu^{I^{\textnormal{B}}}$ and $\nu^{I^{\textnormal{B}}}$ are defined as in Lemma \ref{lem_dis_theo} and \eqref{eq_dis_nu_sbm_single}, respectively.
\item \label{lem_cruc_incl_i_i_i} There exist representatives $\bar{\psi}_{n}^{I^{\textnormal{B}}}\in [\psi_{n}]$ and there is a lower semicontinuous convex function $\psilim^{I^{\textnormal{B}}} \colon \Rd \rightarrow [0,+\infty]$ such that
\begin{alignat}{2}
\lim_{n \rightarrow \infty} \bar{\psi}_{n}^{I^{\textnormal{B}}}(y) &= \psilim^{I^{\textnormal{B}}}(y) < + \infty, \qquad &&y \in I^{\textnormal{B}}, \label{prop_psinbar_bound_sharperer_ii} \\
\lim_{n \rightarrow \infty} \bar{\psi}_{n}^{I^{\textnormal{B}}}(y) &=\psilim^{I^{\textnormal{B}}}(y) =  + \infty, \qquad &&y \in \Rd \setminus C^{\textnormal{B}}. \label{prop_psinbar_bound_sharperer_iii}
\end{alignat}
\item \label{lem_cruc_incl_i_i_i_i} The function $\psilim^{I^{\textnormal{B}}}$ is a dual optimizer for the pair $(\mu^{I^{\textnormal{B}}},\nu^{I^{\textnormal{B}}})$.
\end{enumerate}
\begin{proof} The following statements are valid for $\kappa^{\textnormal{B}}$-a.e.\ $I^{\textnormal{B}}$:

\smallskip

By Lemma \ref{lem_dis_theo}, we have $\mu^{I^{\textnormal{B}}}(I^{\textnormal{B}}) = 1$. From the definitions of $\mu^{I^{\textnormal{B}}}$ and $\nu^{I^{\textnormal{B}}}$ it follows that $\mu^{I^{\textnormal{B}}} \lc \nu^{I^{\textnormal{B}}}$. Moreover, by Lemma \ref{prop_loc_fast_conv}, $(\psi_{n})_{n \geqslant 1}$ is an optimizing sequence for the local dual problem from $\mu^{I^{\textnormal{B}}}$ to $\nu^{I^{\textnormal{B}}}$. Hence we can apply Proposition \ref{prop_psinbar_bound_sharper} to the pair $(\mu^{I^{\textnormal{B}}},\nu^{I^{\textnormal{B}}})$, which gives \ref{lem_cruc_incl_i_i} -- \ref{lem_cruc_incl_i_i_i_i}.
\end{proof}
\end{proposition}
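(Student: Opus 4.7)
The plan is to reduce the statement to Proposition~\ref{prop_psinbar_bound_sharper} applied to the local pair $(\mu^{I^{\textnormal{B}}}, \nu^{I^{\textnormal{B}}})$, with all the preparatory work having essentially been done in Lemmas~\ref{lem_dis_theo}, \ref{lem_dotpsbmp} and \ref{prop_loc_fast_conv}. Throughout, every assertion is to be read as holding for $\kappa^{\textnormal{B}}$-a.e.\ $I^{\textnormal{B}} \in \widehat{\mathcal{K}}$, and at the end one takes the intersection of the countably many full-measure sets produced along the way.

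\textbf{Step 1: properties of the local marginals.} First I would invoke Lemma~\ref{lem_dis_theo} directly to obtain the disintegration formula \eqref{lem_dis_theo_eq_mu} together with $\mu^{I^{\textnormal{B}}}(I^{\textnormal{B}}) = 1$. Since the local kernel $\pi^{I^{\textnormal{B}}}$ of \eqref{eq_def_pisbm_loc_second} is a martingale transport from $\mu^{I^{\textnormal{B}}}$ to $\nu^{I^{\textnormal{B}}}$ by Lemma~\ref{lem_dotpsbmp}, the convex order $\mu^{I^{\textnormal{B}}} \lc \nu^{I^{\textnormal{B}}}$ follows immediately.

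\textbf{Step 2: localising the optimising sequence.} By the fast convergence condition \eqref{eq_def_psi_i} and Lemma~\ref{prop_loc_fast_conv}\ref{prop_loc_fast_conv_ii}, the very same sequence $(\psi_{n})_{n \geqslant 1} \subseteq \Cqaff$ is an optimising sequence for the local dual problem from $\mu^{I^{\textnormal{B}}}$ to $\nu^{I^{\textnormal{B}}}$. This is the crucial transfer of the hypothesis from the global to the local pair, and it exploits precisely the summability condition \eqref{eq_def_psi_i} (pointwise convergence of $\mathcal{D}(\psi_{n})$ would in general not pass to the $\kappa^{\textnormal{B}}$-a.e.\ fiber by a Borel--Cantelli-type argument, but summability does).

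\textbf{Step 3: applying Proposition~\ref{prop_psinbar_bound_sharper} locally.} Since for any $x \in I^{\textnormal{B}}$ one has $I^{\textnormal{B}}(x) = I^{\textnormal{B}}$ by Proposition~\ref{prop_sbm_paving}\ref{icpodpar}, and since by Step~1 the probability measure $\mu^{I^{\textnormal{B}}}$ gives full mass to this cell, the hypothesis of Proposition~\ref{prop_psinbar_bound_sharper} is met for the local pair $(\mu^{I^{\textnormal{B}}}, \nu^{I^{\textnormal{B}}})$ with the optimising sequence $(\psi_{n})_{n \geqslant 1}$ provided by Step~2. Applying that proposition delivers in one stroke the support identity \eqref{lem_cruc_incl_i}, the representatives $\bar{\psi}_{n}^{I^{\textnormal{B}}} \in [\psi_{n}]$, the limiting function $\psilim^{I^{\textnormal{B}}}$ with the pointwise convergences \eqref{prop_psinbar_bound_sharperer_ii} and \eqref{prop_psinbar_bound_sharperer_iii}, and the dual optimality of $\psilim^{I^{\textnormal{B}}}$ for the local pair.

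\textbf{Main obstacle.} The proof itself is essentially bookkeeping once the preceding results are in hand; the only delicate point is that several distinct $\kappa^{\textnormal{B}}$-null sets appear (the disintegration exceptional set, the exceptional set from Lemma~\ref{lem_dotpsbmp}, and the exceptional set from Lemma~\ref{prop_loc_fast_conv}\ref{prop_loc_fast_conv_ii}), and one must verify that they can be gathered into a single $\kappa^{\textnormal{B}}$-null set on whose complement all three conclusions \ref{lem_cruc_incl_i_i}--\ref{lem_cruc_incl_i_i_i_i} hold simultaneously. This is handled by taking the countable intersection of the full-measure sets, which is why the decomposition of the paving into countably many or $\kappa^{\textnormal{B}}$-measurable parameterisation through $\widehat{\mathcal{K}}$ (rather than an ad hoc uncountable intersection over fibers) is important.
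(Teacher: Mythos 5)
Your proof is correct and takes essentially the same route as the paper's: reduce to Proposition \ref{prop_psinbar_bound_sharper} applied locally, after transferring $\mu^{I^{\textnormal{B}}}(I^{\textnormal{B}})=1$ and $\mu^{I^{\textnormal{B}}} \lc \nu^{I^{\textnormal{B}}}$ from Lemma \ref{lem_dis_theo} and the definitions, and the local optimality of $(\psi_n)_{n\geqslant 1}$ from Lemma \ref{prop_loc_fast_conv}\ref{prop_loc_fast_conv_ii}. You are slightly more explicit than the paper in checking the hypothesis $\mu^{I^{\textnormal{B}}}(I^{\textnormal{B}}(x))=1$ via Proposition \ref{prop_sbm_paving}\ref{icpodpar}, and in noting that the finitely many exceptional $\kappa^{\textnormal{B}}$-null sets must be combined; both are harmless elaborations of the paper's argument.
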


\section{The proofs of Theorems \ref{irr_theom_gen_case_rep}, \ref{gen_case_theom_gen_case_int}, and Corollary \ref{gen_case_cor_gen_case_int_dmt}} \label{sec_1b_potmr}

Combining the results of Sections \ref{ecocford_sec_three} -- \ref{sec_gen_cas_tdotdp} allows us to finally provide the proof of our main result, Theorem \ref{gen_case_theom_gen_case_int}.

\begin{proof}[Proof of Theorem \ref{gen_case_theom_gen_case_int}:] As always, let $\mu, \nu \in \PP_{2}(\Rd)$ with $\mu \lc \nu$. Throughout the proof we fix a sequence $(\psi_{n})_{n \geqslant 1} \subseteq \Cqaff$ of convex functions satisfying \eqref{eq_def_psi_i} and consider the Bass paving $\{I^{\textnormal{B}}(x)\}_{x \in \Rd}$ of Definition \ref{def_sbm_paving}, induced by this sequence. In the following, all statements about a set $I^{\textnormal{B}} \in \widehat{\mathcal{K}}$ have to be understood in a $\kappa^{\textnormal{B}}$-a.s.\ sense. Once again we emphasize that the ``fast convergence condition'' \eqref{eq_def_psi_i} ensures that $(\psi_{n})_{n \geqslant 1}$ is an optimizing sequence for the local dual problems from $\mu^{I^{\textnormal{B}}}$ to $\nu^{I^{\textnormal{B}}}$. Note that the assertions \ref{gen_theom_gen_case_rep_o} -- \ref{gen_theom_gen_case_rep_ii} of Theorem \ref{gen_case_theom_gen_case_int} precisely correspond to the assertions \ref{lem_cruc_incl_i_i} -- \ref{lem_cruc_incl_i_i_i_i} of Proposition \ref{lem_cruc_incl}.

\smallskip

\noindent \ref{gen_theom_gen_case_rep_iii} As $\psilim^{I^{\textnormal{B}}}$ is a dual optimizer for the pair $(\mu^{I^{\textnormal{B}}},\nu^{I^{\textnormal{B}}})$, by Theorem \ref{theorem.1.4.o1a}, there is a Bass martingale $M^{I^{\textnormal{B}}} = (M_{t}^{I^{\textnormal{B}}})_{0 \leqslant t \leqslant 1}$ from $\mu^{I^{\textnormal{B}}}$ to $\nu^{I^{\textnormal{B}}}$. Moreover, the pair $(v^{I^{\textnormal{B}}},\alpha^{I^{\textnormal{B}}})$ defining the Bass martingale $M^{I^{\textnormal{B}}}$ is given as claimed in \eqref{eq_gen_theom_gen_case_rep_iii}. The Bass martingale is the stretched Brownian motion $\pi^{I^{\textnormal{B}}}$ from $\mu^{I^{\textnormal{B}}}$ to $\nu^{I^{\textnormal{B}}}$ and 
\[
\Law(M_{0}^{I^{\textnormal{B}}},M_{1}^{I^{\textnormal{B}}}) = \pi^{I^{\textnormal{B}}} \in \MT(\mu^{I^{\textnormal{B}}},\nu^{I^{\textnormal{B}}}), 
\]
where the martingale transport $\pi^{I^{\textnormal{B}}}$ is defined as in \eqref{eq_def_pisbm_loc}.

\smallskip

\noindent \ref{gen_theom_gen_case_rep_iv} The existence of the Bass martingale $M^{I^{\textnormal{B}}}$ from $\mu^{I^{\textnormal{B}}}$ to $\nu^{I^{\textnormal{B}}}$ implies the irreducibility of the pair $(\mu^{I^{\textnormal{B}}},\nu^{I^{\textnormal{B}}})$. 
\end{proof}

Combining the results of \cite{BVBST23} with Theorem \ref{gen_case_theom_gen_case_int}, we obtain Theorem \ref{irr_theom_gen_case_rep}.

\begin{proof}[Proof of Theorem \ref{irr_theom_gen_case_rep}] \ref{irr_theom_gen_case_rep_1} Since the pair $(\mu,\nu)$ is irreducible, by Theorem \ref{theorem.1.4.o1a} there is a dual optimizer $\psiopt$ with $\mu(\operatorname{ri}(\dom\psiopt)) = 1$, inducing a Bass martingale $(M_{t})_{0 \leqslant t \leqslant 1}$ from $\mu$ to $\nu$. More precisely, the pair $(v,\alpha)$ defining the Bass martingale is given by $v = \psiopt^{\ast}$ and $\alpha = \nabla (\psiopt^{\ast} \ast \gamma)^{\ast}(\mu)$. In particular,
\[
(\nabla \psiopt^{\ast} \ast \gamma)(\alpha) = \mu
\qquad \textnormal{ and } \qquad 
\nabla \psiopt^{\ast}(\alpha \ast \gamma) = \nu.
\]
As $I = \operatorname{ri}(\widehat{\supp}(\nu))$, this implies that $\operatorname{ri}(\dom\psiopt) = I$ and thus $\mu(I) = 1$.

\smallskip

Applying Theorem \ref{gen_case_theom_gen_case_int}, we conclude that $C = \widehat{\supp}(\nu) = C^{\textnormal{B}}(x)$, for $\mu$-a.e.\ $x \in \Rd$, and obtain the assertions \ref{irr_theom_gen_case_rep_o} -- \ref{irr_theom_gen_case_rep_iii} of Theorem \ref{gen_case_theom_gen_case_int}. Moreover, we see that $\psiopt$ is equal to $\psilim$, modulo adding an affine function.

\medskip

\noindent \ref{irr_theom_gen_case_rep_2} Without loss of generality we assume that the optimizing sequence $(\psi_{n})_{n \geqslant 1} \subseteq \Cqaff$ consists of convex functions $\psi_{n}$ which are nonnegative. By assumption, the relative interior $I$ of $C = \widehat{\supp}(\nu)$ has full $\mu$-measure. Furthermore, the sequence $(\psi_{n}(y))_{n \geqslant 1}$ is assumed to be bounded, for all $y \in I$. Then, by \cite[Proposition 7.20]{BVBST23} and \cite[Corollary 7.21]{BVBST23}, the pair $(\mu,\nu)$ is irreducible, so that again the assertions \ref{irr_theom_gen_case_rep_i} -- \ref{irr_theom_gen_case_rep_iii} hold. 
\end{proof}

\begin{proof}[Proof of Corollary \ref{gen_case_cor_gen_case_int_dmt}:] As in the proof of Theorem \ref{gen_case_theom_gen_case_int}, we fix a sequence $(\psi_{n})_{n \geqslant 1} \subseteq \Cqaff$ of convex functions satisfying \eqref{eq_def_psi_i} and consider the Bass paving $\{I^{\textnormal{B}}(x)\}_{x \in \Rd}$ of Definition \ref{def_sbm_paving}, induced by this sequence. By Theorem \ref{gen_case_theom_gen_case_int}, the pair $(\mu^{I^{\textnormal{B}}(x)},\nu^{I^{\textnormal{B}}(x)})$ is irreducible, for $\mu$-a.e.\ $x \in \Rd$. Thus, by \cite[Theorem D.1]{BVBST23}, the probability measure $\pi^{\sbm}_{x}$ is equivalent to $\nu^{I^{\textnormal{B}}(x)}$, for $\mu$-a.e.\ $x \in \Rd$. In particular, we have 
\[
\supp(\pi^{\sbm}_{x}) = \supp(\nu^{I^{\textnormal{B}}(x)}),
\]
for $\mu$-a.e.\ $x \in \Rd$. On the other hand, by \ref{gen_theom_gen_case_rep_o} of Theorem \ref{gen_case_theom_gen_case_int}, we have that 
\[
C^{\textnormal{B}}(x) = \widehat{\supp}(\nu^{I^{\textnormal{B}}(x)}),
\]
for $\mu$-a.e.\ $x \in \Rd$. We conclude that $\widehat{\supp}(\pi_{x}^{\sbm}) = C^{\textnormal{B}}(x)$, for $\mu$-a.e.\ $x \in \Rd$, i.e.\ the equality in \eqref{gen_case_cor_gen_case_int_dmt_eq}. 

\smallskip

In order to show the inclusion in \eqref{gen_case_cor_gen_case_int_dmt_eq}, we let $\pi \in \MT(\mu,\nu)$ be an arbitrary martingale transport. By contradiction, we assume that the set
\[
A \coloneqq \Big\{ x \in \Rd \colon \widehat{\supp}(\pi_{x}) \cap \big ( \Rd \setminus C^{\textnormal{B}}(x) \big)  \neq \varnothing \Big\}
\]
has strictly positive $\mu$-measure. By \ref{irr_theom_gen_case_rep_i} of Theorem \ref{irr_theom_gen_case_rep}, for $\mu$-a.e.\ $x \in A$, we can select representatives $\bar{\psi}_{n}^{x} \in [\psi_{n}]$ and there is a lower semicontinuous convex function $\psilim^{x} \colon \Rd \rightarrow [0,+\infty]$, such that 
\begin{alignat}{2}
\lim_{n \rightarrow \infty} \bar{\psi}_{n}^{x}(y) &= \psilim^{x}(y) < + \infty, \qquad &&y \in I^{\textnormal{B}}(x), \label{gen_case_cor_gen_case_int_dmt_i} \\
\lim_{n \rightarrow \infty} \bar{\psi}_{n}^{x}(y) &= \psilim^{x}(y) = + \infty, \qquad &&y \in \Rd \setminus C^{\textnormal{B}}(x). \label{gen_case_cor_gen_case_int_dmt_ii}
\end{alignat}
This selection can be done in a $\mu$-measurable way. In particular, by \eqref{gen_case_cor_gen_case_int_dmt_i} and since $x \in I^{\textnormal{B}}(x)$, the sequence $(\bar{\psi}_{n}^{x}(x))_{n \geqslant 1}$ is bounded, for $\mu$-a.e.\ $x \in A$. Next, we choose $\tilde{A} \subseteq A$ with $\mu(\tilde{A}) > 0$ such that the sequence $(\bar{\psi}_{n}^{x}(x))_{n \geqslant 1}$ is uniformly bounded by some $M > 0$, for all $x \in \tilde{A}$. Since
\[
\int \big( \bar{\psi}_{n}^{x}(y) - \bar{\psi}_{n}^{x}(x) \big) \, \pi_{x}(dy) \geqslant 0,
\]
for $\mu$-a.e.\ $x \in \Rd$, we conclude that
\[
\int \int \big( \bar{\psi}_{n}^{x}(y) - \bar{\psi}_{n}^{x}(x) \big) \, \pi_{x}(dy) \, \mu(dx) 
\geqslant 
\int_{x \in \tilde{A}} \int_{y \in \Rd}  \bar{\psi}_{n}^{x}(y) \, \pi_{x}(dy) \, \mu(dx) - M.
\]
Similarly, as $\bar{\psi}_{n}^{x}(y) \geqslant 0$, we have that
\[
\int_{x \in \tilde{A}} \int_{y \in \Rd}  \bar{\psi}_{n}^{x}(y) \, \pi_{x}(dy) \, \mu(dx)
\geqslant 
\int_{x \in \tilde{A}} \int_{y \in \Rd \setminus C^{\textnormal{B}}(x)} \bar{\psi}_{n}^{x}(y) \, \pi_{x}(dy) \, \mu(dx).
\]
By Fatou's lemma and using \eqref{gen_case_cor_gen_case_int_dmt_ii}, we obtain that
\[
\liminf_{n \rightarrow \infty} \int_{x \in \tilde{A}} \int_{y \in \Rd \setminus C^{\textnormal{B}}(x)} \bar{\psi}_{n}^{x}(y) \, \pi_{x}(dy) \, \mu(dx) = + \infty.
\]
Altogether, we have
\[
\sup_{n \geqslant 1} \int \int \big( \bar{\psi}_{n}^{x}(y) - \bar{\psi}_{n}^{x}(x) \big) \, \pi_{x}(dy) \, \mu(dx) = + \infty,
\]
which is a contradiction to \cite[Lemma 7.9]{BVBST23}.    
\end{proof}

\section{Examples} \label{section_examples}

We start with a simple example motivating the treatment of the general case in Theorem \ref{gen_case_theom_gen_case_int} when the Bass paving $\{I^{\textnormal{B}}(x)\}_{x \in \R^d}$ of the pair $(\mu,\nu)$ is not reduced to one element so that there does not exist a Bass martingale from $\mu$ to $\nu$.

\begin{example} \label{ex_two_bm_gen_case} We first spell out the case of a geometric Brownian motion on $\R_{+} = (0,+\infty)$, which is an example of a Bass martingale (recall Definition \ref{def_bm_gcp}).

\smallskip

Let $Z$ be a standard Gaussian random variable on $\R$ and define 
\[
\mu_{+} \coloneqq \delta_1,  \qquad 
\nu_{+} \coloneqq \operatorname{Law}\big(\exp(Z - \tfrac{1}{2})\big),
\]
so that $\mu_{+} \lc \nu_{+}$. It is straightforward to calculate all the functions defining the Bass martingale from $\mu_{+}$ to $\nu_{+}$. In particular, we have 
\[
v_{+}(z) = \tfrac{d}{dz} v_{+}(z) = \exp(z-\tfrac{1}{2}), \qquad z \in \R,
\]
with the property that $\operatorname{Law}(v_{+}(Z)) = \nu_{+}$. The convex conjugate of $v_{+}$ and its derivative are equal to
\[
\psi_{+}(y) = y (\log y  - \tfrac{1}{2}), \qquad \tfrac{d}{dy} \psi_{+}(y) = \log y + \tfrac{1}{2}, \qquad y > 0.
\]
Summing up, $\pi_{+} = \delta_{1} \otimes \nu_{+}$ defines a Bass martingale from $\mu_{+}$ to $\nu_{+}$, for which we can explicitly compute all its ingredients.

\smallskip

Next, we mirror this example along the vertical axis, i.e.
\[
\mu_{-}  \coloneqq \delta_{-1},  \qquad 
\nu_{-} \coloneqq \operatorname{Law}\big(-\exp(Z - \tfrac{1}{2})\big).
\]
Again, we can explicitly calculate the relevant quantities of the Bass martingale joining $\mu_{-}$ to $\nu_{-}$ and obtain the mirrored functions on $\R_{-} = (-\infty,0)$. In particular,
\[
\psi_{-}(y) = - y \big(\log (-y)  - \tfrac{1}{2}\big), \qquad y < 0.
\]

\smallskip

Finally, we define the convex combination of these two examples, i.e.
\[
\mu \coloneqq \tfrac{1}{2}(\mu_{+} + \mu_{-}), \qquad 
\nu \coloneqq \tfrac{1}{2}(\nu_{+} + \nu_{-}).
\]
It is still possible to pin down the primal optimizer $\pi^{\sbm}$ for the pair $(\mu,\nu)$, namely
\[
\pi^{\sbm} = \tfrac{1}{2} ( \delta_{1} \otimes \nu_{-}+ \delta_{-1} \otimes \nu_{+}).
\]
However, we do not have a Bass martingale any more but only a stretched Brownian motion from $\mu$ to $\nu$, since we cannot find a function inducing the transport $\pi^{\sbm}$. Intuitively speaking, the stretched Brownian motion consists of the above two Bass martingales which ``do not talk to each other''. The Bass paving is given by
\[
\big\{I^{\textnormal{B}}(x)\big\}_{x \in \R^d} = \big\{ (-\infty,0) , \{0\}, (0,+\infty) \big\},
\]
with $\kappa^{\textnormal{B}}((-\infty,0)) = \kappa^{\textnormal{B}}((0,+\infty)) = \frac{1}{2}$ and $\kappa^{\textnormal{B}}(\{0\}) = 0$. We remark that in the one-dimensional case such invariant decompositions were already studied in \cite{BJ16}.

\smallskip

Regarding the dual side of optimization, we know from \eqref{dual_prob_gen_sec_form} that there is an optimizing sequence $(\psi_n)_{n \geqslant 1} \subseteq \Cqaffo$ of convex functions for the dual problem \eqref{dp_mbb_mcov_dt_f}. On $\R_{+}$ the functions $(\psi_n)_{n \geqslant 1}$ should approximate the dual optimizer $\psi_{+}$ for $(\mu_{+},\nu_{+})$, while on $\R_{-}$ they should approximate the dual optimizer $\psi_{-}$ for $(\mu_{-},\nu_{-})$. A naive guess would be to choose (or approximate) the function
\begin{equation} \label{eq_gen_ex_c}
\psi(y) \coloneqq 
\begin{cases} 
\psi_{+}(y), & \mbox{for } y \geqslant 0, \\
\psi_{-}(y), & \mbox{for } y \leqslant 0.
\end{cases} 
\end{equation}
Two problems arise when one tries to approximate $\psi$ as defined in \eqref{eq_gen_ex_c} by convex functions $\psi_{n}$ in $\Cqaffo$: firstly at $0$ (convexity) and secondly at $\pm \infty$ (boundedness). The boundedness requirement for $\psi_{n} \in \Cqaffo$ is easily taken care of and we ignore this issue. The crucial point is that, by trying to paste $\psi_{+}$ and $\psi_{-}$ together at $0$, we grossly violate the convexity property, as $\psi^{\prime}(0_{+}) = - \infty$ while $\psi^{\prime}(0_{-}) = + \infty$.

\smallskip

Here is the remedy to this problem: let $\psi_{n,+}$ and $\psi_{n,-}$ be convex approximations of $\psi_{+}$ and $\psi_{-}$, respectively, which both have a finite derivative at $y = 0$ (which clearly is possible). Defining
\[
\tilde{\psi}_{n}(y) \coloneqq 
\begin{cases} 
\psi_{n,+}(y), & \mbox{for } y \geqslant 0, \\
\psi_{n,-}(y), & \mbox{for } y \leqslant 0,
\end{cases} 
\]
we still have not found a sequence of convex functions approximating $\psi_{+}$ and $\psi_{-}$ on $\R_{+}$ and $\R_{-}$, respectively. But for large enough $C_{n}$, the function
\[
\psi_{n}(y) \coloneqq \tilde{\psi}_{n}(y) + C_{n} \vert y \vert 
\]
becomes convex. While $\psi_{n}$, of course, neither approximates $\psi_{+}$ on $\R_{+}$ nor $\psi_{-}$ on $\R_{-}$, recall that the dual functions should be considered ``modulo adding affine functions''. Passing to
\[
\psi_{n}(y) - C_{n} y, \qquad y \geqslant 0
\]
we get a good approximation of $\psi_{+}$ on $\R_{+}$, and passing to
\[
\psi_{n}(y) + C_{n} y, \qquad y \leqslant 0
\]
we get a good approximation of $\psi_{-}$ on $\R_{-}$. Hence ``locally'', i.e.\ on $\R_{+}$ and $\R_{-}$, the sequence $(\psi_{n})_{n \geqslant 1}$ converges --- after adding proper affine functions --- to $\psi_{+}$ and $\psi_{-}$, respectively. But, of course, the sequence $(\psi_{n})_{n \geqslant 1}$ itself does not converge on $\R$ and it should come as no surprise that there is no ``global'' optimizer $\psi$ defined on $\R$. \hfill $\Diamond$ 
\end{example}

\begin{example} \label{example_circles} We thank Mathias Beiglb\"ock and Krzysztof Ciosmak for kindly suggesting this example to us, the former for the case $R > \frac{3}{2}$, the latter for the case $R = \frac{3}{2}$.

\medskip

We denote by $\boldsymbol{S}_{r}$ the sphere in $\mathds{R}^{2}$ with radius $r$, i.e.
\[
\boldsymbol{S}_{r} \coloneqq  \{ x \in \mathds{R}^{2} \colon \vert x \vert = r \}.
\]
We let $\mu$ be the uniform distribution on $\boldsymbol{S}_{1}$ and define the probability measure 
\[ 
\nu^{(R)} \coloneqq \tfrac{1}{2} ( \nu_{\frac{1}{2}} + \nu_{R} ),
\]
where $\nu_{\frac{1}{2}}$ and $\nu_{R}$ are uniform distributions on $\boldsymbol{S}_{\frac{1}{2}}$ and $\boldsymbol{S}_{R}$, respectively. It is straightforward to check --- and will follow from the discussion below --- that $\nu^{(R)}$ dominates $\mu$ in convex order if and only if $R \geqslant \frac{3}{2}$.

\medskip

\noindent \fbox{$R = \frac{3}{2}$} Let us first examine the case $R = \frac{3}{2}$. For $x \in \boldsymbol{S}_{1}$, we define the probability measure 
\[
\pi_{x}^{(\frac{3}{2})} \coloneqq \tfrac{1}{2} ( \delta_{\frac{x}{2}} + \delta_{\frac{3x}{2}} )
\]
and claim that the unique martingale transport from $\mu$ to $\nu^{(\frac{3}{2})}$ is given by
\[
\pi^{(\frac{3}{2})}(dx,dy) \coloneqq \pi_{x}^{(\frac{3}{2})}(dy)  \, \mu(dx) \in \MT(\mu,\nu^{(\frac{3}{2})}).
\]
Clearly, $\pi^{(\frac{3}{2})}$ is a martingale transport between $\mu$ and $\nu^{(\frac{3}{2})}$. To see that $\pi^{(\frac{3}{2})}$ is in fact the unique element of $\MT(\mu,\nu^{(\frac{3}{2})})$, we consider the convex function $\psi(x) = \vert x \vert$ and note that
\[
\int \psi \, d\mu = \int \psi \, d\nu^{(\frac{3}{2})} = 1.
\]
For any candidate martingale transport $\pi(dx,dy) = \pi_{x}(dy)  \, \mu(dx) \in \MT(\mu,\nu^{(\frac{3}{2})})$ we thus obtain from Jensen's inequality that
\[
\int \psi \, d\pi_{x}(dy) = \psi(x),
\]
which implies that $\pi_{x} = \pi_{x}^{(\frac{3}{2})}$, for $\mu$-a.e.\ $x \in \mathds{R}^{2}$.

\smallskip

In particular, the pair $(\mu,\nu^{(\frac{3}{2})})$ is not irreducible. In fact, identifying $\mathds{R}^{2}$ with $\mathds{C}$, the Bass paving consists ($\kappa^{\textnormal{B}}$-almost surely) of the continuum many relatively open line segments 
\[
I_{\varphi} 
\coloneqq I^{\textnormal{B}}(\mathrm{e}^{i \varphi}) 
= \big\{ r \mathrm{e}^{i \varphi} \colon  r \in (\tfrac{1}{2}, \tfrac{3}{2}) \big\}, \qquad \varphi \in [0,2\pi),
\]
as $\pi^{(\frac{3}{2})}$ transports $I_{\varphi}$ into $C_{\varphi} \coloneqq \overline{I_{\varphi}}$.

\medskip 

\noindent \fbox{$R > \frac{3}{2}$} We next pass to the case $R > \frac{3}{2}$, which behaves quite differently. For $\alpha \in [-\frac{\pi}{6},\frac{\pi}{6}]$, the line 
\begin{equation} \label{eq_line_elloft}
t \longmapsto \ell(t) \coloneqq (1,0) + t(1,\vartheta),
\end{equation}
where $\vartheta = \tan \alpha$, intersects the sphere $\boldsymbol{S}_{\frac{1}{2}}$ at a point $x^{(\frac{1}{2})}$, as indicated in Figure \ref{figure_a} below. For $\alpha \in (-\frac{\pi}{6},\frac{\pi}{6})$, there are two intersection points and we choose $x^{(\frac{1}{2})}$ as the one which is closer to $(1,0)$. Furthermore, we denote by $x^{(R)}$ the intersection point of the line $t \mapsto \ell(t)$ with the sphere $\boldsymbol{S}_{R}$, where we choose $x^{(R)}$ as the intersection point to the right of $(1,0)$. Elementary geometry reveals that, for $\alpha \in (0,\frac{\pi}{6}]$, there is a unique $R = R(\alpha) > \frac{3}{2}$ such that 
\[
\vert (1,0) - x^{(\frac{1}{2})} \vert 
= \vert (1,0) - x^{(R)} \vert.
\]
In other words, $R$ is chosen such that $(1,0)$ is the midpoint between $x^{(\frac{1}{2})}$ and $x^{(R)}$.

\begin{figure}[H] 
\centering
\includegraphics[width=0.74\textwidth]{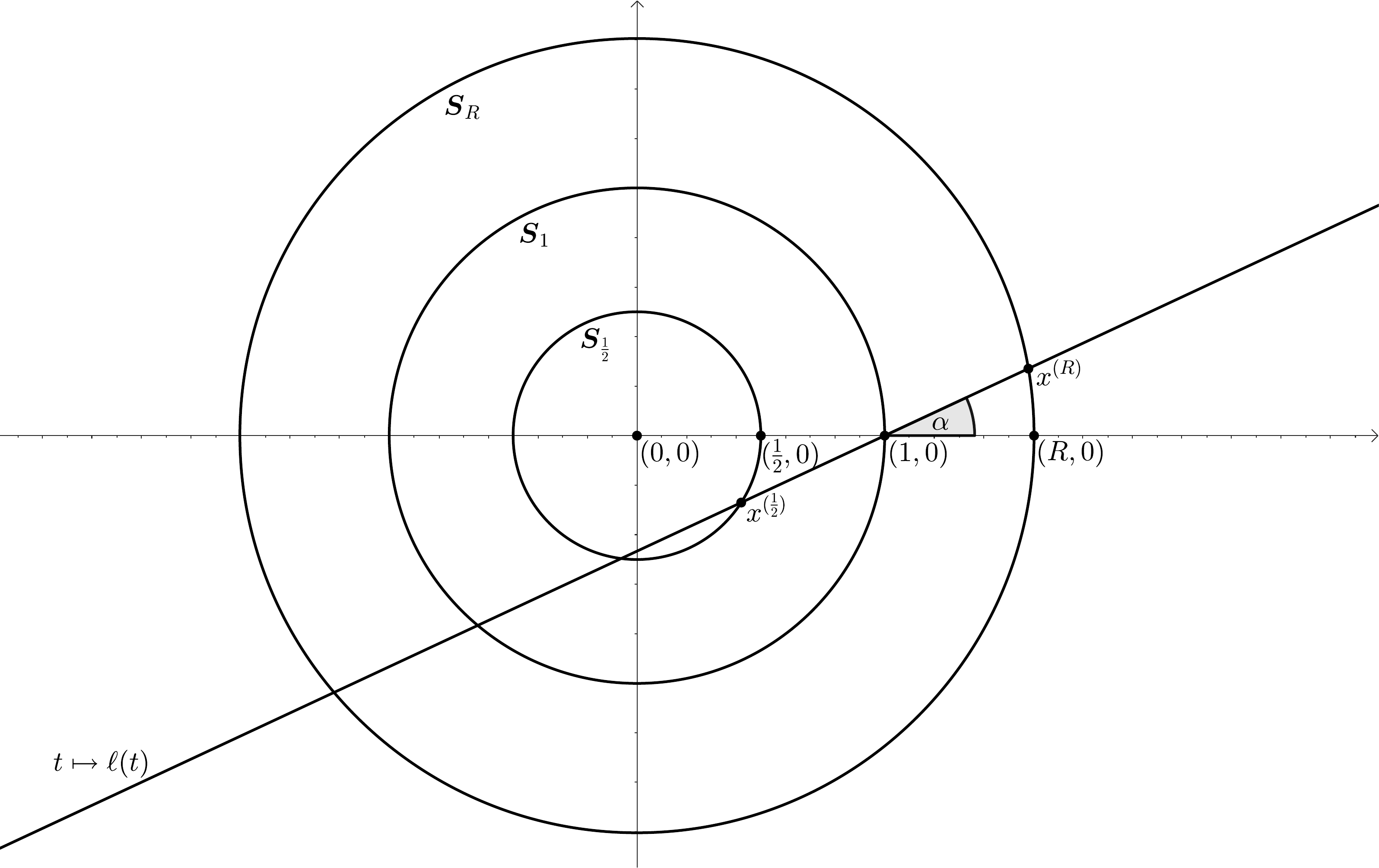}
\caption{Illustration of Example \ref{example_circles} in the case $\alpha = 25^{\circ}$ and $R \approx \frac{8}{5}$.}
\label{figure_a}
\end{figure}

\smallskip

After these preparations, we turn to the martingale transports in $\MT(\mu,\nu^{(R)})$, for $R = R(\alpha) > \frac{3}{2}$ as above. Figure \ref{figure_a} indicates a martingale transport analogous to the one constructed above in the case $R = \frac{3}{2}$. Identifying again $\mathds{R}^{2}$ with $\mathds{C}$, we define
\[
\pi_{\mathrm{e}^{i\varphi}}^{(R)} \coloneqq \tfrac{1}{2} ( \delta_{\mathrm{e}^{i\varphi}x^{(\frac{1}{2})}} + \delta_{\mathrm{e}^{i\varphi}x^{(R)}}),
\qquad \varphi \in [0,2\pi).
\]
As in the case $R = \frac{3}{2}$ it follows from rotational invariance that
\[
\pi^{(R)}(dx,dy) \coloneqq \pi_{x}^{(R)}(dy)  \, \mu(dx) \in \MT(\mu,\nu^{(R)})
\]
defines a martingale transport from $\mu$ to $\nu^{(R)}$. What about the invariant cells attached to the martingale transport $\pi^{(R)}$? Again as in the case $R = \frac{3}{2}$, there are continuum many relatively open line segments 
\begin{equation} \label{ex_sec_ca_inc_ce}
I_{\varphi} 
\coloneqq I^{\textnormal{B}}(\mathrm{e}^{i \varphi}) 
= \big( \mathrm{e}^{i \varphi} x^{(\frac{1}{2})}, \mathrm{e}^{i \varphi} x^{(R)} \big), \qquad \varphi \in [0,2\pi),
\end{equation}
such that $\pi^{(R)}$ leaves $C_{\varphi} \coloneqq \overline{I_{\varphi}}$ invariant.

\smallskip

But now the transport $\pi^{(R)}$ is not unique any more. A striking way to see this is to replace $\alpha > 0$ by $-\alpha$, i.e.\ mirroring the line $t \mapsto \ell(t)$ defined in \eqref{eq_line_elloft} along the $x$-axis. In this way we obtain another martingale transport, denoted by $\overline{\pi}^{(R)}$, which is different from $\pi^{(R)}$. In particular, the invariant cells of $\overline{\pi}^{(R)}$ are obtained by ``flipping'' the cells in \eqref{ex_sec_ca_inc_ce}. This phenomenon is in sharp contrast to the one-dimensional setting $d = 1$, analyzed in \cite{BJ16}, where it was shown that the paving of $\mathds{R}$ into invariant cells only depends on the pair $(\mu,\nu)$, but not on the special choice of $\pi \in \MT(\mu,\nu)$. In the case $d \geqslant 2$, we refer to \cite{GKL19} for an analysis of the invariant pavings of $\mathds{R}^{d}$ which do depend on the choice of $\pi \in \MT(\mu,\nu)$, for $\mu, \nu \in \PP_{2}(\Rd)$ with $\mu \lc \nu$. In particular, our example in dimension $d=2$ shows that the invariant cells of a martingale transport $\pi \in \MT(\mu,\nu^{(R)})$ may depend on the choice of $\pi$.

\medskip

The above discussion motivates the approach of \cite{DMT19} and \cite{OS17}, which focused on the \textit{universal} partition of $\mathds{R}^{d}$, induced by $\mu, \nu \in \PP_{2}(\Rd)$ with $\mu \lc \nu$, as defined in \cite[Theorem 2.1]{DMT19} and revisited in Corollary \ref{gen_case_cor_gen_case_int_dmt} above. It turns out that in the above example of the pair $(\mu,\nu^{(R)})$, the De March--Touzi paving is in fact trivial, i.e.\ the pair $(\mu,\nu^{(R)})$ is irreducible. More precisely, we will see that there is a Bass martingale $M = (M_{t})_{0 \leqslant t \leqslant 1}$ from $\mu$ to $\nu^{(R)}$, for which we have that the law of $M_{t}$ is equivalent to Lebesgue measure on the closed disk with radius $R$ in $\mathds{R}^{2}$, for $0 < t < 1$, while, of course, $\Law(M_{0}) = \mu$ and $\Law(M_{1}) = \nu^{(R)}$. 

\medskip

We now consider the dual optimization problem for $(\mu,\nu^{(R)})$ in the case $R > \frac{3}{2}$, which can be explicitly solved. Let $\alpha^{(\rho)}$ denote the uniform distribution on the sphere $\boldsymbol{S}_{\rho}$ with radius $\rho > 0$ to be specified later. Denoting by $\gamma$ the standard Gaussian measure on $\mathds{R}^{2}$, we consider the convolution measure $\alpha^{(\rho)} \ast \gamma$ and define the Brenier map $\nabla v^{(\rho,R)}$ from $\alpha^{(\rho)} \ast \gamma$ to $\nu^{(R)}$. Here $v^{(\rho,R)}$ is a convex function on $\mathds{R}^{2}$, unique up to an additive constant, for which the derivative $\nabla v^{(\rho,R)}$ exists Lebesgue-almost everywhere and therefore also $(\alpha^{(\rho)} \ast \gamma)$-almost surely. As $\nu^{(R)}$ is supported by $\boldsymbol{S}_{\frac{1}{2}}$ and $\boldsymbol{S}_{R}$, we have that $\nabla v^{(\rho,R)}$ takes its values almost surely in $\boldsymbol{S}_{\frac{1}{2}} \cup \boldsymbol{S}_{R}$.

\smallskip

It is intuitively rather obvious that on each radial ray of $\mathds{R}^{2}$, say on $\{ \lambda(1,0) \colon \lambda > 0\}$, the directional derivative along this ray has to take either the value $\frac{1}{2}$ or $R$, each with conditional probability $\frac{1}{2}$ under the measure $\alpha^{(\rho)} \ast \gamma$, induced on this ray. Restricting $v^{(\rho,R)}$ to the $x$-axis, we obtain the following picture.

\begin{figure}[H] 
\centering
\includegraphics[width=0.74\textwidth]{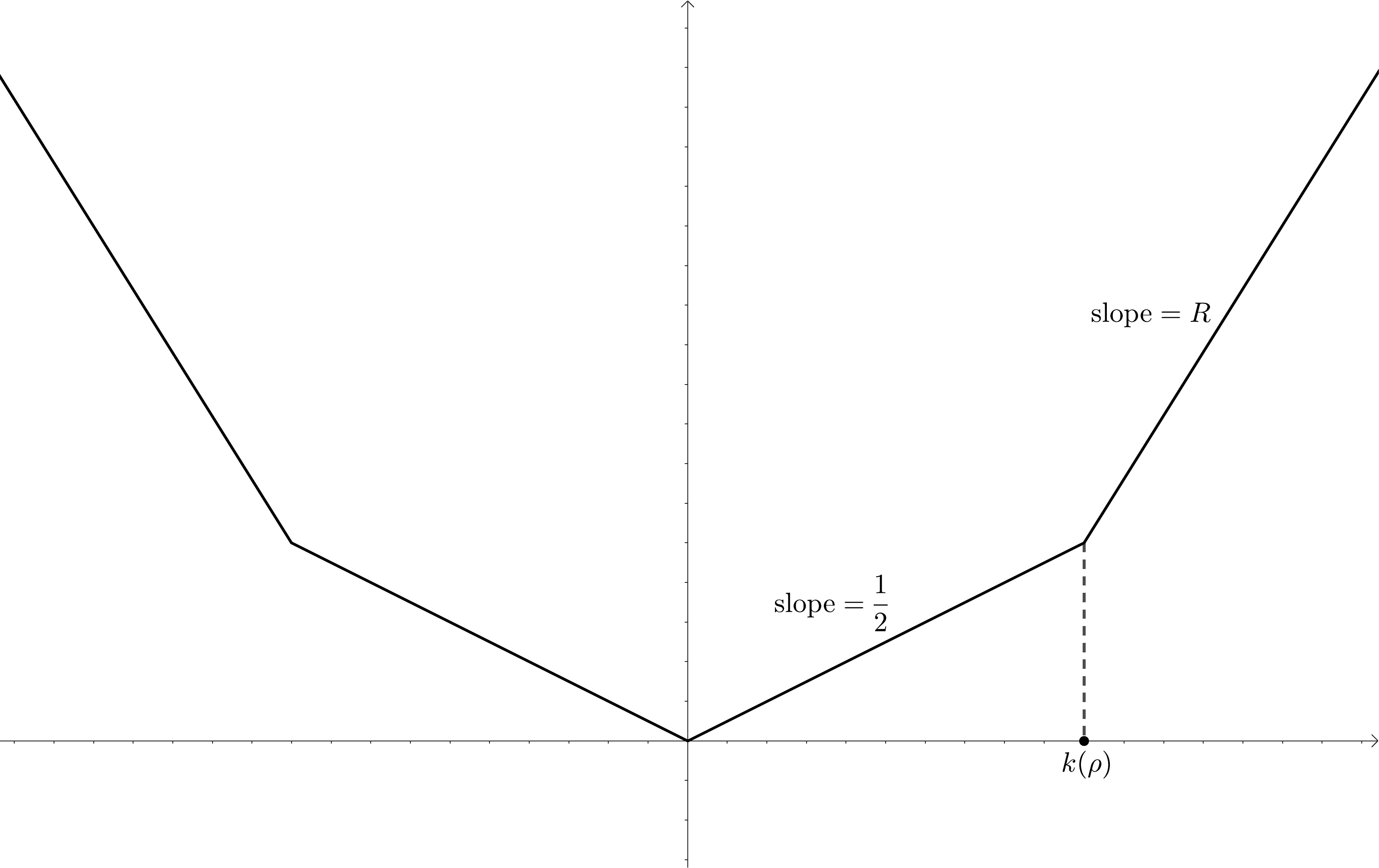}
\caption{Restriction of the convex function $v^{(\rho,R)}$ to the $x$-axis, with $R \approx \frac{8}{5}$.}
\label{figure_b}
\end{figure}

The function in Figure \ref{figure_b} has a kink at some point $k(\rho)$, which can be determined as the median of the conditional law of $\alpha^{(\rho)} \ast \gamma$, induced on the ray $\{ \lambda(1,0) \colon \lambda > 0\}$. Finally we determine, for $\rho > 0$, the number $R = R(\rho) > \frac{3}{2}$ as the unique number such that the rotationally invariant function $\nabla v^{(\rho,R)} \ast \gamma$ maps the measure $\delta_{\rho(0,1)} \ast \gamma$ on $\mathds{R}^{2}$ to the measure 
\[
(\nabla v^{(\rho,R)} \ast \gamma)(\delta_{\rho(0,1)} \ast \gamma)
\]
with barycenter $(0,1)$. We know from the general theory that there must be a unique such $R > \frac{3}{2}$ for any given $\rho > 0$. It also follows from the general theory that, for $0 < t < 1$, the law of $M_{t}$ is equivalent to Lebesgue measure on the closed disk with radius $R$ in $\mathds{R}^{2}$, i.e.\
\[
\{ x \in \mathds{R}^{2} \colon \vert x \vert \leqslant R\}.
\]
Indeed, by \cite[Remark 6.3, (iii)]{BVBST23} the law of $M_{t}$ is the image of $\alpha \ast \gamma^{t}$ under the map $\nabla v^{(\rho,R)} \ast \gamma^{1-t}$. While $\alpha \ast \gamma^{t}$ is equivalent to Lebesgue measure on $\mathds{R}^{2}$, for $0 < t \leqslant 1$, the range of $\nabla v^{(\rho,R)} \ast \gamma^{1-t}$ is the open ball of radius $R$ on $\mathds{R}^{2}$, for $0 \leqslant t < 1$. In conclusion, the support of the law of $M_{t}$ is the closed disk of radius $R$ in $\mathds{R}^{2}$, for $0 < t < 1$.

\smallskip

On a last note, we discuss the limiting behaviour of the function $\rho \mapsto R(\rho)$, for $\rho > 0$. It turns out that $\lim_{\rho \rightarrow +\infty} R(\rho) = \frac{3}{2}$ while $\lim_{\rho \rightarrow 0} R(\rho) = +\infty$. \hfill $\Diamond$ 
\end{example}

\newpage

\bibliographystyle{alpha}
{\footnotesize
\bibliography{references}}

\end{document}